\newtheorem{definition}{Definition}[section]
\newtheorem{remark}{Remark}[section]
\newtheorem{theorem}{Theorem}[section]
\newtheorem{lemma}{Lemma}[section]
\newtheorem{proposition}{Proposition}[section]
\renewcommand{\@makefnmark}{\hbox{\@textsuperscript{\normalfont\@thefnmark}}}
\theoremstyle{remark}
\numberwithin{equation}{section}
\begin{document}

\title{The Dual Majorizing Measure Theorem for Canonical Processes}


\author{Xuanang Hu}
\address{Shandong University,  Jinan, China.}
\email{xuananghu7@gmail.com}

\author{Vladimir V. Ulyanov}
\address{Lomonosov Moscow State University and National Research University Higher School of Economics, Moscow, Russia.}
\email{vulyanov@cs.msu.ru }

\author{Hanchao Wang}
\address{Shandong University, Jinan,  China.}
\email{wanghanchao@sdu.edu.cn}

\subjclass[2020]{60B11, 41A46, 46B20}

\date{}

\keywords{Dual majorizing measure theorem; log-concave processes; trees; polynomial-time algorithm.
}

\begin{abstract}
We give a dual, separated-tree formulation of Latała’s majorizing measure theorem for canonical processes with log-concave tails. Under the same assumptions as in Latała’s characterization, we introduce parameterized separation trees and prove that the expected supremum is equivalent, up to constants, to the corresponding tree functional. We also develop a pointwise growth condition, inspired by the contraction principle, which leads to a deterministic polynomial-time algorithm for approximating the expected supremum when the index set is finite.
\end{abstract}

\maketitle

\section{Introduction}

Consider independent symmetric random variables $\left(Y_i\right)_{i \geq 1}$.  Let \(T\) be a subset of $\ell^2 = \bigl\{\, x = (x_i)_{i\ge 1} : \sum_{i=1}^\infty x_i^2 < \infty \,\bigr\}$.
Consider a collection of random variables \(X = (X_t)_{t \in T}\) indexed by \(T\).
If for every \(t \in T\),
\(
X_t = \sum_{i \ge 1} t_i Y_i,
\)
then the process \(X = (X_t)_{t\in T}\) is called a \emph{canonical process} over \(T\).
A central quantity in the study of stochastic processes is the expected supremum $\textsf{E}\sup_{t\in T} X_t$, where
$$\textsf{E}\sup_{t\in T} X_t:=\sup\{\textsf{E}\sup_{t\in F} X_t, F\subset T, F ~\text{finite}\}.$$

Characterizing the boundedness of a stochastic process is often referred to as the majorizing measure theorem. It has a long history, dating back to the seminal result of Fernique \cite[Chapters 4--6]{f75}
. In general, the question of the supremum of process is tricky. However, it was answered in many cases. The most famous result for Gaussian processes is Talagrand's majorizing measure theorem, in which he developed the generic chaining technique to describe the upper bound of stochastic processes as a functional. The interested reader is referred to Chapter~2 of \cite{talbook} for further details.
In our paper, we focus on geometrical methods for providing bounds for the supremum of a process. In 2003,  Gu\'edon and Zvavitch \cite{gz} studied Gaussian processes using labeled-covering trees and packing trees. This idea comes from the work of Talagrand \cite{talacta94}.  This
point of view has been very fruitful in the study of embeddings of subspaces. Ding et al. \cite{ding1} obtained an entirely geometrical characterization for the expected supremum of the Gaussian process, and based on this, they exhibited a strong connection between the cover times of the graphs, Gaussian processes, and Talagrand’s theory of majorizing measures.  As a result, they developed a deterministic polynomial-time algorithm to calculate the cover times of the graphs. Here, the cover time refers  to the maximum of the expected times required to visit each vertex of the graph when random walk begins at an arbitrary vertex. 
The study of supremum and the geometrical characterization of canonical processes plays an essential role in mathematics and applications, 
ranging from high-dimensional statistics and machine learning \cite{bartlett2002rademacher}, 
to empirical process theory and statistical inference \cite{talacta94,talagrand1996new},
as well as signal processing and the theory of random operators \cite{latala2005some,Vershynin2010}.

When $\{Y_i\}_{i\ge 1}$ is a sequence of Gaussian random variables,  Talagrand \cite{talacta87} provides complete control of the size of $ \textsf{E}\sup_{t\in T} X_t$. Talagrand \cite{talacta94} extended his result to the non-Gaussian case. When $\{Y_i\}_{i\ge 1}$ is a sequence of the Bernoulli random variables,  $(X_t)_{t\in T}$ is called the Bernoulli process.   Bednorz and Latała \cite{b14} studied the expected supremum of the Bernoulli process in the spirit of chaining, resolving a longstanding conjecture of Talagrand (``the Bernoulli Conjecture''). Bednorz and Martynek \cite{ref2} obtained a full characterization of the expected supremum of infinitely divisible processes and positively settled the conjecture posed by Talagrand. Recently,   Park and Pham \cite{p24}  obtained that 
 the event that the supremum of a  generalized Bernoulli process  is significantly larger than its expectation can be covered by a set of half-spaces whose sum of measures is small.  

We define the functions
$$
U_i\left(x\right)=-\log \textsf{P}\left\{\left|Y_i\right| \geq x\right\},
$$
 When $U_i\left(x\right)$ are convex for all $i\ge 1$, $ X_t=\sum_{i \geq 1} t_i Y_i$ is called log-concave-tailed canonical process. In our paper, we focus on the study of the expected supremum of log-concave-tailed canonical processes. 

Studying the log-concave-tailed canonical process can go back to the results of Latała \cite{latala97}, where a new version of the Sudakov minoration principle for estimating from below $ \textsf{E}\sup_{t\in T} X_t$ was found. Latała \cite{latala14} formulated and discussed the conjecture that every $d$-dimensional log-concave random vector satisfies the Sudakov minoration principle with a universal constant in \cite{latala14}. Weaker forms of the conjecture (under stronger assumptions) were also proved.  A typical example of a log-concave vector is a
vector uniformly distributed on a convex body, which is of interest in asymptotic geometric analysis. Latała and Tkocz \cite{latala15} introduced the definition of $\alpha$-regular growth of moments  and proved that canonical processes based on random variables with regular moments satisfy the Sudakov minoration principle.  The log-concave-tailed canonical process can be regarded as an example in Latała and Tkocz \cite{latala15}. Further studies can be found in \cite{bogucko} and \cite{latala24}.

The primary purpose of this paper is to provide a dual geometric formulation of Latała’s characterization of the expected supremum for canonical processes with log-concave tails. A standard approach to such problems is Talagrand's generic chaining method \cite{talbook}, in which the so-called growth condition plays a crucial role. This condition may be viewed as a natural extension of the Sudakov minoration principle. Under the same assumptions as those considered here, the suprema of such canonical processes were already completely characterized by Latała \cite{latala97}; this now classical material is also treated in Chapter 8 of \cite{talbook}. This is the content of the majorizing measure theorem in this setting.  Thus, the aim of the present paper is not to obtain a new characterization of the supremum itself, but to provide a dual geometric formulation of this known characterization in terms of separated trees.

However, when one seeks a geometric characterization for canonical processes with log-concave tails, difficulties arise that are more delicate than in the Gaussian case. In the log-concave setting, the growth condition has to be established with respect to a family of distances, and one is therefore led to analyze the underlying geometric structure simultaneously across this entire family. By contrast, for Gaussian processes, it is enough to verify the growth condition with respect to a single prescribed metric.  

The main contributions of this paper are as follows. Firstly,  inspired by van Handel \cite{van1,van2}, we develop a new method to establish the growth condition.
Furthermore, to obtain a geometric characterization of   $\textsf{E}\sup_{t\in T} X_t$, 
we introduce a new tree-based structure, called a \emph{parameterized separation tree}. By a tree, we mean a family of subsets of a set 
$T$ that satisfies certain structural conditions, used to describe how the set is partitioned in the generic chaining method. For definition of a tree, see Section 2. 
This construction allows us to define an associated geometric functional, which provides matching upper and lower bounds for $\textsf{E}\sup_{t\in T} X_t$. The resulting theorem should be viewed as a dual tree representation of Latała’s theorem, rather than as an independent replacement for it.
More precisely, we have the following dual majorizing measure result:

\begin{theorem}\label{dmr}
Under conditions~\eqref{1} and~\eqref{2}, 
\[
\sup_{\mathcal{T}} \inf_{t\in\mathcal{T}}
\sum_{\substack{t \in A \\ A \in \bigcup_{i \ge 0} \mathcal{B}_i}}
\log\!\left(\bigl|c\bigl(p(A)\bigr)\bigr|\right)
\, r^{-\mathbf{j}(A)}
\;\sim_{r}\;
 \, \textsf{E}\sup_{t \in T} X_t ,
\]
where the supremum is taken over all parameterized separation trees $\mathcal{T}$ on $T$, the notation $A \sim_{r} B$ means that there exist positive constants $c(r)$ and $C(r)$, depending only on $r$, such that $c(r)A \le B \le C(r)A$.
\end{theorem}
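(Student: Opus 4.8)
The plan is to prove separately the two comparisons $F(T)\lesssim_r\textsf{E}\sup_{t\in T}X_t$ and $\textsf{E}\sup_{t\in T}X_t\lesssim_r F(T)$, where $F(T)$ denotes the tree functional on the left-hand side of the asserted equivalence; together they give $F(T)\sim_r\textsf{E}\sup_{t\in T}X_t$.

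\textbf{Step 1: the process dominates every tree.} Fix a parameterized separation tree $\mathcal T$ and, for each node $A$, a representative point $t_A\in A$, chosen so that the center of a node coincides with the center of one of its children (this lets the centerings telescope). I would prove by induction on the height of the subtree rooted at a node $A$ that $\textsf{E}\sup_{t\in A}(X_t-X_{t_A})$ is, up to a constant depending only on $r$, at least the value of that subtree — the infimum over leaves $t$ below $A$ of the sum of $\log(|c(p(A'))|)\,r^{-\mathbf j(A')}$ over the nodes $A'$ on the path from $A$ down to $t$. The base case (a leaf) is immediate. For the inductive step let $A$ have children $A_1,\dots,A_m$, with $m$ comparable to the branching label $|c(A)|$ charged on descending from $A$. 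For each $\ell$,
\[
\sup_{t\in A}\bigl(X_t-X_{t_A}\bigr)\ \ge\ \bigl(X_{t_{A_\ell}}-X_{t_A}\bigr)+\sup_{t\in A_\ell}\bigl(X_t-X_{t_{A_\ell}}\bigr).
\]
Two inputs then close the step. First, the representatives $t_{A_1},\dots,t_{A_m}$ are separated in the distance attached to $A$ — this is exactly where the parameterization matters, since that distance is the one tied to the branching number $m$ — so the Sudakov-type minoration for log-concave-tailed canonical processes (obtained by applying the Latała and Latała–Tkocz minoration principles to our functions $U_i$) yields $\textsf{E}\max_{\ell}(X_{t_{A_\ell}}-X_{t_A})\gtrsim_r\log(m)\,r^{-\mathbf j(A_\ell)}$ at the relevant scale. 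Second, each inner process $X_t-X_{t_{A_\ell}}$ is again a canonical process, its supremum concentrates about its mean, and the axioms of a separation tree force the diameters of the $A_\ell$ to be small relative to that scale; a union bound over $\ell$ then shows that $\textsf{E}\sup_{t\in A}(X_t-X_{t_A})\gtrsim_r\log(m)\,r^{-\mathbf j(A_\ell)}+\min_\ell\textsf{E}\sup_{t\in A_\ell}(X_t-X_{t_{A_\ell}})$. Inserting the inductive hypothesis for each $A_\ell$ and unrolling to the root of $\mathcal T$ gives $\textsf{E}\sup_{t\in T}X_t\gtrsim_r\inf_{t\in\mathcal T}\sum\cdots$; taking a supremum over $\mathcal T$ completes this direction.

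\textbf{Step 2: some tree dominates the process.} For the reverse comparison I would combine generic chaining with a tree extraction. First, using the new growth condition established in the paper — proved by a convexity argument in the spirit of van Handel, and, crucially, valid simultaneously for the whole family of distances rather than for one fixed metric — one runs Talagrand's partitioning scheme on $T$ to obtain an admissible sequence of partitions together with the chaining bound $\textsf{E}\sup_{t\in T}X_t\lesssim_r\gamma(T)$, where $\gamma(T)$ is the functional produced by that scheme. Second, one converts a near-optimal partition scheme into a parameterized separation tree: each decreasing chain $A_0(t)\supset A_1(t)\supset\cdots$ becomes a branch, at each step one branches along a maximal subfamily separated at the scale dictated by that step, and the labels $c(\cdot)$ and $\mathbf j(\cdot)$ are read off from the cardinalities and scales involved. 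Checking that the result obeys the axioms of a parameterized separation tree from Section~2 and that its value is $\gtrsim_r\gamma(T)$ yields $\textsf{E}\sup_{t\in T}X_t\lesssim_r F(T)$.

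\textbf{Main obstacle.} I expect the hardest part to be the bookkeeping forced by working with a whole family of distances rather than a single metric. At each level of the tree one must select the distance for which the minoration increment has the correct order while simultaneously controlling the child diameters — which govern the loss in the concatenation of Step~1 — in a compatible way; this is why the minoration and the deviation estimates for $\sup_{t\in A_\ell}(X_t-X_{t_{A_\ell}})$ must be made uniform over the family, and why the growth condition has to be established in its stronger family form rather than the single-metric form used in the Gaussian case. Verifying that the tree extracted in Step~2 genuinely satisfies the separation axioms with these scale-dependent metrics, and that the deviation bounds available for suprema of log-concave-tailed processes — weaker and more scale-sensitive than Gaussian concentration — are strong enough for the concatenation of Step~1, is where most of the technical effort should lie.
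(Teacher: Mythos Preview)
Your Step~2 is essentially the paper's argument: the paper defines size functionals $F_{n,j}$ as suprema over (a subclass of) trees, verifies in Proposition~\ref{prop 3} that these satisfy the growth condition of Section~3 (the verification is precisely the ``tree-gluing'' you describe --- given separated points $t_\ell$ with near-optimal subtrees over each $H_\ell$, splice them into a single tree one level higher), and then the contraction-principle machinery (Theorem~\ref{lower bound}) yields $\gamma_\psi(T)\lesssim_r F_{0,j_0^\psi}(T)$, hence $\textsf{E}\sup X_t\lesssim_r$ (tree functional) via the chaining bound of Theorem~\ref{them 2.4}.

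Your Step~1, however, is \emph{not} what the paper does. You propose a direct induction on the tree using Sudakov minoration plus concentration of the subprocess suprema. The paper instead routes this direction through Fernique's functional $I_\mu(t)=\sum_n 2^n r^{-\bar j_n(t)}$: Theorem~\ref{them 2.1} (quoting Talagrand's admissible-sequence theorem, Theorem~\ref{them 1.1}) gives $\int I_\mu\,d\mu\lesssim_r\textsf{E}\sup X_t$ for every probability $\mu$, while Theorem~\ref{them 2.2} shows that for any tree and any $\mu$ supported on its leaves, the separation axioms (via Proposition~\ref{proposition 1}) force $\mu\bigl(t+B_{\mathbf{j}(A)}(2^{\mathbf{n}(A)})\bigr)\le N_{\mathbf{n}(A)}^{-1}$ at every node $A$ containing $t$, whence each term of the tree sum is dominated by the matching term of $I_\mu(t)$. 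No Sudakov step appears explicitly --- it is buried inside the quoted Theorem~\ref{them 1.1}. The paper then closes the loop by noting (Theorem~\ref{them 3.1}) that the structured subclass of iterative organized trees already realises the supremum over all parameterized separation trees.

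What this buys: the paper's route avoids confronting the fact that siblings $A_1,A_2$ in a parameterized separation tree may carry \emph{different} values $\mathbf{j}(A_1)\neq\mathbf{j}(A_2)$, so that axiom~(6) does not give separation in a single common metric and Latała's Sudakov inequality does not apply directly in the form you wrote; the Fernique-functional argument handles this painlessly because it compares term by term along a fixed leaf rather than across siblings. Your route is more self-contained and conceptually direct, but you would need to either reduce to the case of constant $\mathbf{j}$ on each sibling set or reformulate the minoration step to cope with the heterogeneous separation scales --- this is the real content of the bookkeeping you flag as the main obstacle.
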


All relevant definitions and the construction of the parameterized separation tree are presented in Section~2. 

In particular, we introduce the growth condition of points. By combining it with the dual majorizing measure theorem provided in our paper,  we can provide a deterministic polynomial-time algorithm for approximating $\displaystyle \textsf{E}\sup_{t\in T} X_t$, up to universal constants, when $|T|<\infty$. This algorithm is also inspired by Ding et al. \cite{ding1}.

The remainder of this paper is organized as follows. Section 2 first recalls some basic concepts and notation, and lists some lemmas about the concentration inequalities of the log-concave-tailed canonical processes. Section 3 is devoted to the discussion of the growth conditions. In Section 4, we provide the proof of our main results. Section 5 discusses two critical examples, and applications are provided in Section 6.

\section{Preliminaries}

In this section, we first introduce some properties of logarithmically concave, as well as related conclusions from previous studies, which are very helpful in stating our theorems.

Recall that the functions
$$
U_i\left(x\right)=-\log \textsf{P}\left\{\left|Y_i\right| \geq x\right\},
$$
are assumed to be convex throughout the paper. We also assume that $U_i\left(1\right)=1$. Since $U_i\left(0\right)=0$,  we have $U_i^{\prime}\left(1\right)\ge1$ by convexity.  Furthermore, consider the function $\hat{U}_i\left(x\right)$ on $\mathbb{R}$, which is given by
$$
\hat{U}_i\left(x\right)=\left\{\begin{array}{l}
x^2, ~\text { if } 0 \leq|x| \leq 1, \\
2 U_i\left(|x|\right)-1, \text { if }|x| \geq 1.
\end{array}\right.
$$
It is easy to see that the function is convex. For any $u>0$, we define
$$
\mathcal{N}_u\left(t\right)=\sup \left\{\sum_{i \geq 1} t_i a_i: \sum_{i \geq 1} \hat{U}_i\left(a_i\right) \leq u\right\}
$$
and  $$
B\left(u\right)=\left\{t: \mathcal{N}_u\left(t\right) \leq u\right\}.
$$
By Corollary 8.27 in Talagrand \cite{talbook},  we have
\begin{align}\label{| |< B}
    \text { If } u \geq 1 \text { and } t \in B\left(u\right) \text {, we have }(\textsf{E}|X|^u)^{1/u}=\left\|X_t\right\|_u \leq Cu \text {, }
\end{align}
where $C$ denotes a constant. In the sequel, $k,K, c, C$ with and without indices represent constants. We assume that these constants do not necessarily have to be the same every time they appear. If a constant depends on a parameter $\alpha$, we write $c = c(\alpha)$.  We use the notation $A \sim_\alpha B$ to indicate that $c(\alpha)A \le B \le C(\alpha)A$ for some absolute constants $c(\alpha)$ and $C(\alpha)$. We use $A\lesssim_\alpha B$ to indicate that $A\le C(\alpha)B$.

Given $r\geq 4$, we define 
$$
\varphi_j\left(s, t\right)=\inf \left\{u>0 : s-t \in r^{-j} B\left(u\right)\right\},
$$
$$
B_j \left(s,r\right) = \{t\in T;\varphi_j\left(s, t\right) \leq r  \}.
$$
Since $U_i$ is convex, $U_i\left(x\right)$ grows at least as fast as a linear function. We will assume the following regularity conditions, which ensure that $U_i$ does not grow too fast.   For some constant $C_0$, we assume the following throughout the paper:
\begin{align}\label{1}
\forall i \geq 1, \forall s \geq 1, U_i\left(2 s\right) \leq C_0 U_i\left(s\right) 
\end{align}
This condition is often referred to as ``the $\Delta_2$-condition." We will also assume that
\begin{align}\label{2}
\forall i \geq 1, U_i^{\prime}\left(0\right) \geq 1 / C_0,
\end{align}
where $U_i^{\prime}\left(0\right)$ is the right derivative at 0 of the function $U_i\left(x\right)$. 

Talagrand developed the generic chaining method
to capture the relevant geometric structure and the boundedness of stochastic processes.  Below are some notation for the generic chaining method.   For a set $A$,  $|A|$ means the cardinality of $A$. 

\begin{definition}
An admissible sequence of partitions of  $T$ is an increasing sequence $\left(\mathcal{A}_n\right)_{n\geq 0}$ of partitions of $T$ such that $|\mathcal{A}_n| \leq N_n = 2^{2^n}$ for $n\geq 1$ and  $ |\mathcal{A}_0|=1=N_0$. 
\end{definition}

The following theorem represents a part of the majorizing measure theorem for canonical processes with log-concave tails.

\begin{theorem}\label{them 1.1} (See Theorem 8.3.2 in  \cite{talbook})
Under conditions (\ref{1}) and (\ref{2}), for any $T\subset\ell^2$, there exists an admissible sequence $\left(\mathcal{A}_n\right)_{n\geq0}$ of $T$ and $j_n(\cdot) \in\mathbb{Z}$ associated with $\left(\mathcal{A}_n\right)_{n\geq0}$ 
such that
$$
\forall A \in \mathcal{A}_n, \forall s, s^{\prime} \in A, \varphi_{j_n\left(A\right)}\left(s, s^{\prime}\right) \leq 2^n,
$$
$$
c(C_0) \textsf{E} \sup _{t \in T} X_t < r^{-j_0}\leq c(C_0)  r\textsf{E} \sup _{t \in T} X_t ,
$$
$$\forall s, t \in T, \varphi_{j_0}\left(s, t\right) \leq 1,$$
$$
\sup _{t \in T} \sum_{n \geq 0} 2^n r^{-j_n\left(A_n\left(t\right)\right)} \lesssim_{C_0}   r  \textsf{E} \sup _{t \in T} X_t,
$$
for any $r \geq r_{0} $. Here, 
$r_{0} $ is a constant only depending on $C_0$ and $A_n\left(t\right)$ denotes the unique set $A\in \mathcal{A}_n$ such that $t\in A$.

\end{theorem}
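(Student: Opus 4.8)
\begin{mypro}
Since this is the ``construction'' half of the majorizing measure theorem for log-concave-tailed canonical processes (it is Theorem 8.3.2 of \cite{talbook}), the plan is to run Talagrand's partitioning scheme, adapted so that it operates simultaneously with the whole family of ``distances'' $\{\varphi_j\}_{j\in\mathbb Z}$ rather than with a single metric. The first step is to attach to each finite $H\subset T$ and each $j\in\mathbb Z$ a Fernique-type functional $F_j(H)$ --- for instance the value of the generic-chaining functional evaluated with the distances $\varphi_j,\varphi_{j+1},\dots$ --- which is nondecreasing in $H$ and rescales controllably under $j\mapsto j+1$; then to fix $j_0$ as the largest integer with $\varphi_{j_0}(s,t)\le 1$ for all $s,t\in T$, so the third displayed inequality holds by definition and, once the growth condition below is available, the concentration bound \eqref{| |< B} forces $r^{-j_0}\sim_r\textsf{E}\sup_{t\in T}X_t$, which gives the second display.

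The main obstacle will be the growth condition, a Sudakov-type minoration stated in the $\varphi_j$-geometry: one must produce $\kappa=\kappa(C_0)$ and $r_0=r_0(C_0)$ such that for $r\ge r_0$, $n\ge0$, $j\in\mathbb Z$, whenever $t_1,\dots,t_m\in T$ with $m=N_{n+1}$ satisfy $\varphi_j(t_k,t_\ell)\ge 2^n$ for $k\ne\ell$ and $H_\ell\subset B_j(t_\ell,r)\cap T$ are nonempty, then
\[
F_j\!\Bigl(\bigcup_{\ell\le m}H_\ell\Bigr)\;\ge\;\kappa\,2^{n}r^{-j}\;+\;\min_{\ell\le m}F_{j+1}(H_\ell).
\]
The content is that a set carrying $N_{n+1}$ points $\varphi_j$-separated at scale $2^n$ forces the process over it to have expected supremum of order at least $2^n r^{-j}$. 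For log-concave tails I would prove this following Latała \cite{latala97} and Latała and Tkocz \cite{latala15}, using convexity of the $U_i$, the normalization $U_i(1)=1$, the $\Delta_2$-condition \eqref{1} and the derivative bound \eqref{2}, together with the description of $\|X_t\|_u$ through the functionals $\mathcal N_u$ and the bodies $B(u)$ encoded in \eqref{| |< B}. The delicate point, which has no Gaussian analogue, is that the separation lives in scale-dependent norms, so one must keep track of how $\mathcal N_u$ and $B(u)$ behave as $j$ varies and handle a whole family of distances at once.

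Granting the growth condition, the remaining steps are routine. I would set $\mathcal A_0=\{T\}$; given $A\in\mathcal A_n$, take $j_n(A)$ to be the largest $j$ with $\varphi_j(s,s')\le 2^n$ for all $s,s'\in A$ (this exists, is $\ge j_0$, and automatically produces a sequence ``associated'' with $(\mathcal A_n)$), which is the first display. Then split $A$ into at most $N_{n+1}$ pieces by repeatedly peeling off a ball $B_{j_n(A)}(t,r)\cap A$ whose functional $F_{j_n(A)+1}$ is nearly maximal over the current remainder: the growth condition forces the remainder's functional to drop each time, so the process halts after at most $N_{n+1}$ steps and $F_{j_n(A)+1}$ decreases by at least $\kappa\,2^n r^{-j_n(A)}$ along every chain from level $n$ to level $n+1$. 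Telescoping this decrease along $(A_n(t))_{n\ge0}$ and using $F_j\ge0$ gives $\sup_{t\in T}\sum_{n\ge0}2^n r^{-j_n(A_n(t))}\lesssim_{C_0} r^{-j_0}\lesssim_{C_0} r\,\textsf{E}\sup_{t\in T}X_t$, which is the last display. Finally I would close the loop with the generic chaining upper bound: choosing $\pi_n(t)\in A_n(t)$, summing the increments $X_{\pi_{n+1}(t)}-X_{\pi_n(t)}$, and controlling each through \eqref{| |< B} together with a union bound over the at most $N_{n+1}$ links at level $n$, one obtains $\textsf{E}\sup_{t\in T}X_t\lesssim_{C_0}\sup_t\sum_{n\ge0}2^n r^{-j_n(A_n(t))}+r^{-j_0}$; combined with the previous bound this pins down all four displayed relations for $r\ge r_0(C_0)$.
\end{mypro}
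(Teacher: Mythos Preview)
The paper does not give its own proof of this statement: it is quoted in Section~2 (Preliminaries) with the attribution ``(Theorem 8.3.2 in \cite{talbook})'' and then used as a black box, most notably in the proof of Theorem~\ref{them 2.1}. So there is no proof in the paper to compare your proposal against.

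That said, your sketch is a fair outline of Talagrand's original argument: define Fernique-type functionals on subsets, establish a growth condition in the $\varphi_j$-geometry via the Sudakov-type minoration of Lata{\l}a for log-concave tails, then run the greedy partitioning scheme and telescope. One point worth flagging is that in your step defining $j_n(A)$ you take it to be the largest $j$ with $\varphi_j(s,s')\le 2^n$ on $A$; in Talagrand's actual construction the integers $j_n(A)$ are produced by the partitioning algorithm itself (they record the scale at which the piece was cut), not read off after the fact, and the nondecreasing property along chains is part of what the algorithm guarantees. Your description of the growth condition also has the separation at scale $2^n$ but the containment in $B_j(t_\ell,r)$, whereas the version used in the book (and echoed in this paper's Definition~\ref{growth conditon}) has both the separation and the containment phrased through the sets $B_j(2^n)$; this is a bookkeeping issue but would need to be aligned if you wrote the details out. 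None of this is a genuine gap---the strategy is right---but since the paper simply cites the result, there is nothing further to reconcile.
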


In our paper, we provide a geometric characterization of the above theorem, known as the dual majorizing measure theorem. We need to introduce the notion of parameterized separation tree to present our new dual majorizing measure theorem. Firstly,  we recall the definition of the tree. A tree $\mathcal{T}$ in a metric space $(T,d)$ is a  collection of non-empty subsets of $T$ with the following properties:

\begin{itemize}
    \item Given $A, B$ in $\mathcal{T}$, if $A \cap B \neq \varnothing$, then $A \subset B$ or $B \subset A$;
    \item $\mathcal{T}$ has a largest element.
\end{itemize}

If $A, B \in \mathcal{T}$ and $B \subset A, B \neq A$, we say that $B$ is a child of $A$ if
$$
C \in \mathcal{T}, B \subset C \subset A \Rightarrow C=B \text { or } C=A .
$$

Let \( c(B) \) denote the set of children of node \( B \) in the tree. The parent function \( p(A) \) is defined as follows:  
\[
p(A) = 
\begin{cases} 
B & \text{if there exists } B \text{ such that } A \in c(B), \\
A & \text{otherwise (i.e., \( A \) is the root of the tree)}.
\end{cases}
\]

It is convenient to introduce the set 
$$
S_{\mathcal{T}} =\{t \in T:  \{t\} \in \mathcal{T} \text{ or there exist infinite sets } \left(A_i\right) \text{ such that } t\in \cap _{i}A_i\}.
$$

We assume that there exists a subset of $T$ as the largest element in the tree. The children of the largest element are the first generation of the largest element, and the children of each generation form a generation. In this way, the tree structure is composed of each generation of descendants of the largest element. The tree is built through every generation.  For convenience of notation, we denote the largest element as $\mathcal{B}_{-1}$, and subsequent generations are $\{\mathcal{B}_{k}\}_{k\ge 0}$.

For every element $A$ of $\mathcal{T}$, there are two parameters,  $\mathbf{n}:\mathcal{T} \rightarrow \mathbb{N}\bigcup \{-1\}$ and $\mathbf{j}:\mathcal{T} \rightarrow \mathbb{Z}\bigcup \{\infty\}$.  $\mathbf{j}(\textsf{A})=J$, $\mathbf{n}(\textsf{A})=-1$ if $\textsf{A}$ is the largest element of $\mathcal{T}$, and $J$ is a given positive integer such that $\varphi_J(s/4,t/4)\leq 1$ for any $s,t\in T$. (The assignment $\mathbf{n}(A)=-1$ is largely formal, carrying limited mathematical significance.)

\begin{definition}[Parameterized separation tree]
    A parameterized separation tree is a tree $\mathcal{T}$ with two parameters,  $\mathbf{n}:\mathcal{T} \rightarrow \mathbb{N}\bigcup \{-1\}$ and $\mathbf{j}:\mathcal{T} \rightarrow \mathbb{Z}\bigcup \{\infty\}$, satisfying the following conditions:
\begin{enumerate}
    \item $\mathbf{j}(\textsf{A})=J$, $\mathbf{n}(\textsf{A})=-1$ if $\textsf{A}$ is the largest element of $\mathcal{T}$.
    \item $\mathbf{n}\left(A\right)>\mathbf{n}\left(p\left(A\right)\right)$.
    \item $\mathbf{j}\left(A\right)\geq J$ if $p(A)$ is the largest element. 
    \item $B_{\mathbf{j}\left(A\right)}\left(2^{\mathbf{n}\left(A\right)}\right)\subset B_{\mathbf{j}\left(p\left(A\right)\right)}\left(2^{\mathbf{n}\left(p\left(A\right)\right)}\right)$ if $p(A)$ is not the largest element.
    \item $\mathbf{j}\left(A\right)=\infty$ if $|A|<N_{\mathbf{n}\left(A\right)}$; $|c\left(p(A)\right)|=\min \{|A|,N_{\mathbf{n}\left(A\right)}\}$.
    \item If $A_1, A_2 \in c\left(A\right)$, then \\
          $\left(A_1+B_{\mathbf{j}\left(A_1\right)}\left(2^{\mathbf{n}\left(A_1\right)}\right)\right)\bigcap \left(A_2+B_{\mathbf{j}\left(A_2\right)}\left(2^{\mathbf{n}\left(A_2\right)}\right)\right) = \varnothing$.
\end{enumerate}
\end{definition}

\begin{remark}

The parameters of $\mathbf{n}$ and $\mathbf{j}$ are highly dependent on $\left\{B(2^n)\right\}_{n\geq 0}$, and the definition of $\left\{B(2^n)\right\}_{n\geq 0}$ is closely related to the distribution of $Y_i$. Therefore, $\mathbf{n}$ and $\mathbf{j}$ depend on the distribution of $Y_i$. This indicates that the definition of our parameterized separation tree is also dependent on the distribution of $Y_i$.

\end{remark}

\section{Growth condition}

Within the framework of generic chaining, the growth condition plays a crucial role. In this section, we suggest a new growth condition in our setting. In Talagrand's approach, Sudakov's inequality is key to obtaining the growth condition. Inspired by the works of van Handel \cite{van1,van2}, we have developed a new approach to obtaining the growth condition.

Let $\kappa> 8$ be an integer and set $r=2^{\kappa-3}$.  We consider a family of maps $\left(\psi_j\right)_{j \in \mathbb{Z}}$, with the following properties:
$$
\psi_j: T \times T \rightarrow \mathbb{R}^{+} \cup\{\infty\}, \psi_{j+1} \geq r\psi_j \geq 0, \psi_j\left(s, t\right)=\psi_j\left(t, s\right).
$$
Furthermore,  define
$$
B^{\psi}_j\left(t, c\right)=\left\{s \in T:  \psi_j\left(t, s\right) \leq c\right\} ; B^{\psi}_j\left(c\right)= B^{\psi}_j\left(0, c\right).
$$

In this section, we assume that $B^{\psi}_j(2^{n+\kappa})\subset \rho B^{\psi}_{j-1}(2^n)$, $B^{\psi}_j(2^n)=rB^{\psi}_{j+1}(2^n)$ and $B^{\psi}_j(2^{n+1})\subset 2B^{\psi}_j(2^n)$ for $\forall j\in \mathbb{Z}$ and $n\geq 0$. Here, $0<\rho< 1$ can be chosen as an arbitrarily small constant.  
Set 
$$
k^{\psi}_n(A)=\sup\limits_{|S|\leq N_n}\left\{j\in \mathbb{Z}: A\subset \bigcup\limits_{x\in S}B^{\psi}_j(x,2^{n})\right\},
$$
$$
j^{\psi}_n(A)=\sup\left\{j\in \mathbb{Z}: \psi_j(x,y)\leq 2^n \text{ for }~ \forall x,y \in A\right\},
$$
$$
\gamma_{\psi}(T)=\inf\limits_{\mathcal{A}}\sup\limits_{t\in T}\sum\limits_{n\geq 0}2^nr^{-j^{\psi}_n\left(A_n\left(t\right)\right)},
$$
where $\mathcal{A}=(\mathcal{A}_n)_{n\ge 1}$ means any admissible sequence,  $A_n\left(t\right)$ denotes the unique set $A\in \mathcal{A}_n$ such that $t\in A$. From now on, this symbol carries this meaning and in the rest of this paper, it will not be redefined. Obviously, $k^{\psi}_n(A)\geq j^{\psi}_n(A)$ for any $n$ and $A$.

\begin{lemma}\label{gc lemma 1}
If $n>0$, then either $k^{\psi}_n(A)=+\infty$, or for every $\tau\ge1$ there exists a family $(x_i)_{i\le N_n}\subset A$ such that
\[
\psi_{k^{\psi}_n(A)+\tau}(x_i,x_j)>2^n
\quad\text{for all } i\neq j.
\]
\end{lemma}
\begin{proof}
    Assume that $|A|\geq 2$ and $k^{\psi}_n(A)<+\infty$.  For an arbitrary point $x_1\in A$,  there exists $x_2\in A$ such that 
    $$\psi_{k^{\psi}_n(A)+\tau}(x_1,x_2)>2^{n}.$$  
Otherwise, we have $T\subset B^{\psi}_{k^{\psi}_n(A)+\tau}(x_1,2^{n})$. This implies $k^{\psi}_n(A)\geq k^{\psi}_n(A)+\tau$, which is a contradiction.
    
      If $(x_i)_{i\leq m<N_n}$ satisfies $\psi_{k^{\psi}_n(A)+\tau}(x_i,x_j)>2^{n} \text{ for } \forall i\neq j$, then for an arbitrary point $x_{m+1}\in A$, we have  $$\psi_{k^{\psi}_n(A)+\tau}(x_i,x_{m+1})>2^{n} \text{ for } \forall i\leq m. $$ Otherwise $T\subset \bigcup\limits_{i\leq m}B^{\psi}_{k^{\psi}_n(A)+\tau}(x_i,2^{n})$, which implies $k^{\psi}_n(A)\geq k^{\psi}_n(A)+\tau$.  This makes a contradiction again.
    
     Thus, by induction, we can find $(x_i)_{i\leq N_n} \in A$ such that 
$$
\psi_{k^{\psi}_n(A)+\tau}(x_i,x_j)>2^{n} \text{ for } \forall i\neq j.
$$
\end{proof}
Inspired by van Handel \cite{van1,van2}, we develop the following approach to obtain the growth condition, which we refer to as the contraction principle.

\begin{theorem}[Contraction principle]\label{Contraction Principle}
Let $a: 0\le a<1$ be such that
$$
r^{-k^{\psi}_n(A)}\leq C_1\left( ar^{-j^{\psi}_n(A)}+\sup\limits_{x\in A}r^{-s_n(x)} \right)
$$
for every $n\geq 0$ and $A\subset T$. Then
$$
\gamma_\psi(T)\leq Poly(C_1,r) \left( a\gamma_\psi(T) + \sup\limits_{x\in T}\sum\limits_{n\geq 0}2^nr^{-s_n(x)} \right),
$$
where $Poly(x,y)$ is a polynomial of $x$ and $y$ with non-negative coefficients.
\end{theorem}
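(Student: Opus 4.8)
The plan is to argue directly from the variational definition of $\gamma_\psi(T)$ as an infimum over admissible sequences, and to exploit the hypothesis term-by-term along a chain. First I would fix $\varepsilon>0$ and choose a near-optimal admissible sequence $(\mathcal{A}_n)_{n\ge 0}$ realizing $\gamma_\psi(T)$ up to the factor $1+\varepsilon$; I will also fix, for each $n$ and each $A\in\mathcal{A}_n$, a covering set $S=S(A)$ with $|S|\le N_n$ witnessing $k^\psi_n(A)$ (or note $k^\psi_n(A)=+\infty$, in which case that term contributes nothing and can be ignored). The point is that $j^\psi_n(A)$ is hard to control directly, whereas the hypothesis bounds $r^{-k^\psi_n(A)}$, which dominates $r^{-j^\psi_n(A)}$ since $k^\psi_n(A)\ge j^\psi_n(A)$; so the strategy is to build a \emph{new} admissible sequence, a refinement of $(\mathcal{A}_n)$ by the covering balls $B^\psi_j(x,2^n)$, $x\in S(A)$, on which the $j$-functional of the refined pieces is controlled by the $k$-functional of the original pieces at a comparable scale.

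Concretely, from $\mathcal{A}_n$ I would produce $\mathcal{A}'_{n+m}$ (for a fixed small shift $m$ depending only on $\kappa$, to absorb the index gaps coming from $B^\psi_j(2^{n+\kappa})\subset\rho B^\psi_{j-1}(2^n)$ and $B^\psi_j(2^{n+1})\subset 2B^\psi_j(2^n)$) by intersecting each $A\in\mathcal{A}_n$ with each of the $\le N_n$ balls $B^\psi_{k^\psi_n(A)}(x,2^n)$, $x\in S(A)$; since $N_n^{\,2}\le N_{n+1}$ the result is still admissible after the shift. On each new piece $A\cap B^\psi_{k^\psi_n(A)}(x,2^n)$ the diameter in $\psi_{k^\psi_n(A)}$ is at most $2^{n+1}$ by the triangle-type control built into the $B^\psi$ balls, hence $j^\psi_{n+1}$ of that piece is $\ge k^\psi_n(A)$ (up to the fixed shift in $j$), giving $r^{-j^\psi_{n+1}(\text{new piece})}\lesssim_r r^{-k^\psi_n(A)}$. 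Summing $2^n r^{-j^\psi_n}$ along a chain $t\mapsto A'_n(t)$ in the refined sequence and regrouping, each term is bounded using the hypothesis by $C_1\bigl(a\,r^{-j^\psi_n(A_n(t))}+\sup_{x\in A_n(t)}r^{-s_n(x)}\bigr)$. The first contribution sums back up to (a constant times) $\gamma_\psi(T)$ along the \emph{original} chain through $t$ — here one uses that refinement only decreases the chaining sum, so $\sum 2^n r^{-j^\psi_n(A'_n(t))}$ on the new sequence is at most a constant times the same sum on the old one, closing the loop — while the second contribution is bounded by $\sup_{x\in T}\sum_n 2^n r^{-s_n(x)}$ because $x\in A_n(t)$ for some $x$ and the weights $2^n$ telescope appropriately.

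Collecting, one gets $\gamma_\psi(T)\le \mathrm{Poly}(C_1,r)\bigl(a\,\gamma_\psi(T)+\sup_{x\in T}\sum_n 2^n r^{-s_n(x)}\bigr)(1+\varepsilon)$, and letting $\varepsilon\to 0$ finishes the argument; I would keep the constant explicitly polynomial in $C_1$ and $r$ by tracking that only finitely many geometric-series sums over the fixed shift $m=m(\kappa)$ enter. The main obstacle I anticipate is the bookkeeping at the \emph{interface between the two chaining sums}: the hypothesis is stated for arbitrary $A\subset T$, but to reassemble $\sum 2^n r^{-j^\psi_n(A_n(t))}$ into $\gamma_\psi(T)$ one must be careful that the admissible sequence appearing on the right is genuinely admissible (the refinement must not blow up the cardinality past $N_n$, which forces the scale shift $m$) and that passing from the refined chain back to the original chain loses only a bounded factor; making both of these quantitative, and verifying that the $\rho$-contraction in $B^\psi_j(2^{n+\kappa})\subset\rho B^\psi_{j-1}(2^n)$ is not actually needed here (it will be used later, not in this lemma), is where the care lies. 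A secondary subtlety is the case $k^\psi_n(A)=+\infty$ or $j^\psi_n(A)=+\infty$: these terms vanish and should be excised at the outset so that all manipulations are with finite indices.
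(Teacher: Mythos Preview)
Your overall architecture---refine a near-optimal admissible sequence by the covering balls witnessing $k^\psi_n$, so that on the refined pieces $j^\psi$ is controlled by $k^\psi$ of the parent, then invoke the hypothesis term-by-term---is exactly the paper's strategy. But there is a real gap in the step where you write ``the second contribution is bounded by $\sup_{x\in T}\sum_n 2^n r^{-s_n(x)}$ because $x\in A_n(t)$ for some $x$.'' After applying the hypothesis to $A_n(t)$ you obtain $\sup_{x\in A_n(t)} r^{-s_n(x)}$, and the $x$ realizing this supremum can vary with $n$. Hence
\[
\sup_{t}\sum_{n\ge 0} 2^n \sup_{x\in A_n(t)} r^{-s_n(x)}
\]
is \emph{not} bounded by $\sup_{x\in T}\sum_{n\ge 0} 2^n r^{-s_n(x)}$ in general; there is no telescoping that rescues this, since the inner supremum and the outer sum do not commute.

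The paper repairs exactly this point by inserting an extra layer of refinement \emph{before} the covering step: each $A_n\in\mathcal{A}_n$ is first sliced into $n$ level sets $A_n^1,\dots,A_n^n$ according to the dyadic value of $r^{-s_n(x)}$ (with the last slice collecting the small values, contributing an error $2^{-2(n-1)}r^{-j^\psi_0(T)}$). On each slice $A_n^i$ the quantity $r^{-s_n(x)}$ is essentially constant, so after one applies the hypothesis to $A_n^i$ and then covers by $\le N_n$ balls, the bound reads $r^{-k^\psi_n(A_n^i)}\le C_1\bigl(a\,r^{-j^\psi_n(A_n)}+r^{-s_n(x)}+2^{-2(n-1)}r^{-j^\psi_0(T)}\bigr)$ \emph{for every} $x$ in the piece, in particular for $x=t$. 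Now the second term sums to $\sum_n 2^n r^{-s_n(t)}\le \sup_x\sum_n 2^n r^{-s_n(x)}$ as desired. The price is an extra factor $n$ in the cardinality of the refined partition, which together with the $N_n$ covering pieces gives $\prod_{k\le n} k\,N_k^2<N_{n+3}$ and forces the shift to $n+3$. Your scheme as written omits this level-set step, and without it the argument does not close.
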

\begin{proof}
Obviously, we have $k^{\psi}_n(A)\geq j^{\psi}_0(T)$ and $j^{\psi}_n(A)\geq j^{\psi}_0(T)$ for all $n\geq 0$ and $A\subset  T$. Thus, we can assume that $r^{-s_n(x)}\leq r^{-j^{\psi}_0(T)}$ for all $n\geq 0$ and $x\in T$.

Let $(\mathcal{A}_n)$ be an admissible sequence of partitions of $T$. For every $n\geq 1$ and partition element $A_n\in \mathcal{A}_n$, we construct sets $A_n^{ij}$ as follows. 

We first make partition $A_n$ into $n$ segments (here $1\leq i<n$):
$$
A_n^i=\{x\in A_n: 2^{-2i}r^{-j^{\psi}_0(T)}<r^{-s_n(x)}\leq 2^{-2(i-1)}r^{-j^{\psi}_0(T)}\};
$$
$$
A_n^n=\{x\in A_n: r^{-s_n(x)}\leq 2^{-2(n-1)}r^{-j^{\psi}_0(T)}\}.
$$
We can divide  each $A_n^i$ into at most $N_n$ pieces $A_n^{ij}$ such that 
$$
j^{\psi}_{n+3}(A_n^{ij})\geq j^{\psi}_n(A_n^{ij})\geq k^{\psi}_n(A_n^{i})-1,
$$
$$
r^{-k^{\psi}_n(A_n^{ij})}\leq r^{-k^{\psi}_n(A_n^i)}\leq C_1\left(ar^{-j^{\psi}_n(A_n)}+r^{-s_n(x)}+2^{-2(n-1)}r^{-j^{\psi}_0(T)}\right)
$$
for all $x\in A_n^{ij}$. Let $\mathcal{C}_{n+3}$ be the partition generated by all sets $\{A_k^{ij}\}$. Then $$|\mathcal{C}_{n+3}|\leq\prod_{k=1}^{n}k(2^{2^k})^2<N_{n+3}.$$ Set $\mathcal{C}_k=\{T\}$ for $0\leq k \leq 3$, we  have
\begin{align*}
    \gamma_{\psi}(T)&\leq \sup\limits_{x\in T} \sum_{n\geq 0} 2^nr^{-j^{\psi}_n(C_n(t))}\\
    &\leq Poly(C_1,r)\left( a\sup\limits_{t\in T}\sum\limits_{n\geq0}2^nr^{-j^{\psi}_n(A_n(t))} + \sup\limits_{x\in T}\sum_{n\geq 0}2^{n}r^{-s_n(x)} + r^{-j^{\psi}_0(T)}\right),
\end{align*}
where the universal constant only depends on $r$. By definition, we have $T\subset B^{\psi}_{k^{\psi}_0(T)}(x,1)$ for some $x\in l^2$. So we have 
\begin{align*}
r^{-j^{\psi}_0(T)}&\leq r^{-k^{\psi}_0(T)-1}\leq C_1r\left(ar^{-j^{\psi}_0(T)}+\sup\limits_{x\in T}r^{-s_0(x)}\right)
\\
&\leq Poly(C_1,r)\left(a\gamma_\psi(T) + \sup\limits_{x\in T}\sum\limits_{n\geq 0}2^nr^{-s_n(x)}\right).
\end{align*}

Combining the two inequalities above, we obtain the theorem. 

\end{proof}

\begin{remark}
    When applying contraction principle to estimate $\gamma_{\psi}(T)$, we observe that setting $a$ as a sufficiently small constant yields an upper bound for $\gamma_{\psi}(T)$. However, due to the arbitrary nature of constant selection, it is unlikely to achieve a precise bound such as $(1+\epsilon)\gamma_{\psi}(T)$. Furthermore, the proof of this principle relies almost entirely on the structure of the admissible sequence, implying that its applicability extends beyond the specific context presented in our paper.
\end{remark}
Define  $j^{\psi}_{0}:=j^{\psi}_{0}(T)-1$. Now, we define the growth condition. 
  
\begin{definition}[Growth condition]\label{growth conditon}
We say that $\{F_{n,j}\}$ satisfies the growth condition for $C_2$ if the following occurs. Let $\{F_{n, j}\}$ on $T$ be non-decreasing maps from the subsets of $T$ to $\mathbb{R}^{+}$ for $n \geq 0$ and $j\in \mathbb{Z}$. Assume that $F_{n,j}\geq \max\{F_{n+\kappa,j+1},F_{n,j+1}\}$ and $F_{n,j}\leq F_{0,j^{\psi}_0}=F_{n,k}$ for $\forall n \geq 0, j \geq j^{\psi}_{0} \geq k$. Recall that $\kappa> 8$ is an integer and $r=2^{\kappa-3}$. Moreover, we assume that for  any points $(t_i)_{i\leq N_n} \in l^2$, $C_2\geq 1$ and $j>j^{\psi}_0$ satisfying
    $$
    \begin{aligned}
    \psi_{j+1}(t_i,t_{i^\prime})> 2^{n} \text{ for } \forall i\neq i^\prime,   \\
    \exists u, \text{ s.t. } t_i \in C_2B^{\psi}_{j}(u,2^n) \text{ for } \forall i,
    \end{aligned}
    $$
    and for any $i\leq N_n$, given any sets $H_i \subset  B^{\psi}_{j+2}(t_i,2^{n+\kappa})$, 
    $$
    F_{n,j}(\bigcup\limits_{i\leq N_n}H_i)\geq C(r)2^nr^{-j} + \min\limits_{i\leq N_n}F_{n+\kappa,j+2}(H_i)
    $$
   holds.  Here $C(r)$ is a constant such that $C(r)\geq 1/Poly(r)$, and $Poly(r)$ is a polynomial of  $r$ with non-negative coefficients. 
    
\end{definition}

Define
    $$
    K_n(t,x)=\inf_{s\in \mathbb{Z}}\{tr^{-s}+F_{0,j^{\psi}_0}(T)-F_{n,s}\left(B^{\psi}_s\left(x,2^n\right)\right)\}.
    $$
We have
\begin{align*}
K_n(c_1a2^n,x)&\leq c_1a2^nr^{-s_n^a(x)} + F_{0,j^{\psi}_0}(T)-F_{n,s_n^a(x)}\left(B^{\psi}_{s_n^{a}(x)}\left(x,2^n\right)\right)\\
&\leq K_n(c_1a2^n,x) + 2^{-n}F_{0,j^{\psi}_0}(T).
\end{align*}

\begin{lemma}\label{interpolation} We have 
$$
\sup\limits_{x\in T}\sum_{n\geq 0}2^nr^{-s_n^a(x)}\lesssim \frac{1}{r}\left(\frac{ F_{0,j^{\psi}_0}(T)}{a}+r^{-j^{\psi}_0}\right).
$$

\end{lemma}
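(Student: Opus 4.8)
The plan is to bound the sum $\sum_{n\geq 0}2^nr^{-s_n^a(x)}$ for each fixed $x\in T$ by splitting it according to how $s_n^a(x)$ compares to the "saturation" level $j^\psi_0$. Recall that $s_n^a(x)$ is defined (implicitly in the displayed inequalities preceding the lemma) as a near-minimizer of $s\mapsto c_1a2^nr^{-s}+F_{0,j^\psi_0}(T)-F_{n,s}(B^\psi_s(x,2^n))$, i.e. $K_n(c_1a2^n,x)$ is attained up to an additive error $2^{-n}F_{0,j^\psi_0}(T)$ at $s=s_n^a(x)$. First I would record the two one-sided estimates this gives: on one hand $c_1a2^nr^{-s_n^a(x)}\leq K_n(c_1a2^n,x)+2^{-n}F_{0,j^\psi_0}(T)$, and on the other hand, since $F_{n,s}\leq F_{0,j^\psi_0}(T)$ always, we get $K_n(c_1a2^n,x)\leq c_1a2^nr^{-j^\psi_0}$ by simply plugging $s=j^\psi_0$ into the infimum (using $F_{n,j^\psi_0}\geq$ something nonneg, or more crudely dropping the last two terms since their difference is $\le F_{0,j^\psi_0}(T)$; in fact the cleaner choice is $s$ very negative so $F_{n,s}=F_{0,j^\psi_0}(T)=F_{n,k}$ and the bracket is $c_1a2^nr^{-s}$, which is minimized as $s$ decreases only up to the constraint $s\ge j^\psi_0\ge k$, giving $c_1a2^nr^{-j^\psi_0}$). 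Combining, $c_1a2^nr^{-s_n^a(x)}\leq c_1a2^nr^{-j^\psi_0}+2^{-n}F_{0,j^\psi_0}(T)$.

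Dividing by $c_1a$ and summing over $n\geq 0$ then yields
\[
\sum_{n\geq 0}2^nr^{-s_n^a(x)}\;\leq\;\Bigl(\sum_{n\geq 0}1\Bigr)r^{-j^\psi_0}\;+\;\frac{1}{c_1a}\Bigl(\sum_{n\geq 0}2^{-n}\cdot 2^n\Bigr)F_{0,j^\psi_0}(T),
\]
which is not quite right because $\sum_{n\ge 0}1$ diverges. So the naive split fails and one must instead exploit the geometric decay built into the maps: the hypothesis $F_{n,j}\geq F_{n+\kappa,j+1}$ and $\psi_{j+1}\geq r\psi_j$ force $s_n^a(x)$ to grow roughly linearly in $n$ once it exceeds $j^\psi_0$, so that $2^nr^{-s_n^a(x)}$ decays geometrically. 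Concretely, I would show that for $n$ in a block where $s_n^a(x)=j^\psi_0+m$, the contribution $2^nr^{-s_n^a(x)}$ is controlled by $r^{-j^\psi_0}$ times a summable factor, using $r=2^{\kappa-3}\geq 2^6$ to beat the $2^n$; the key geometric input is that increasing $n$ by $\kappa$ allows (via $B^\psi_j(2^{n+\kappa})\subset\rho B^\psi_{j-1}(2^n)$ and the definition of $k^\psi_n$, which lower-bounds $j^\psi_n$ up to the $F$-growth condition) to increase the effective $j$-index by at least $1$, so $r^{-s}$ drops by a factor $r$ while $2^n$ only grows by $2^\kappa=8r$. Hence each such step contributes a factor $8r/r\cdot$(something)$<1$ after also accounting for the $F_{0,j^\psi_0}(T)/a$ error term, and the series telescopes into $\frac1r\bigl(\frac{F_{0,j^\psi_0}(T)}{a}+r^{-j^\psi_0}\bigr)$.

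The main obstacle will be making the "$s_n^a(x)$ grows linearly in $n$" argument precise, since $s_n^a$ is only a near-minimizer and the growth condition on $\{F_{n,j}\}$ enters only through the relation $F_{n,j}\geq\max\{F_{n+\kappa,j+1},F_{n,j+1}\}$ together with the saturation $F_{n,j}\leq F_{0,j^\psi_0}(T)=F_{n,k}$ for $j\ge j^\psi_0\ge k$; one has to check that the additive slack $2^{-n}F_{0,j^\psi_0}(T)$ in the definition of $s_n^a(x)$ does not accumulate, which is exactly why the $2^{-n}$ weight is chosen so that $\sum_n 2^{-n}F_{0,j^\psi_0}(T)\lesssim F_{0,j^\psi_0}(T)$ is harmless. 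Once the geometric-decay estimate $2^nr^{-s_n^a(x)}\lesssim r^{-j^\psi_0}2^{-cn}+\tfrac{1}{a}F_{0,j^\psi_0}(T)2^{-cn}$ is established uniformly in $x$ (with $c>0$ absolute), summing in $n$ and absorbing the constants into the $\lesssim$ and the $\frac1r$ prefactor finishes the proof.
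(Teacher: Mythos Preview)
Your proposal has a genuine gap. The ``$s_n^a(x)$ grows linearly in $n$ so $2^n r^{-s_n^a(x)}$ decays geometrically'' strategy cannot work as stated: even if moving $n\mapsto n+\kappa$ forces the effective index to increase by one (so $r^{-s}$ drops by a factor $r$), the weight $2^n$ grows by $2^\kappa=8r$, and your own ratio $8r/r=8>1$ is growth, not decay. There is no mechanism that makes $s_n^a(x)$ grow faster than one unit per $\kappa$ steps, so no pointwise bound of the form $2^n r^{-s_n^a(x)}\lesssim 2^{-cn}(\cdots)$ is available. Your invocation of $k_n^\psi$ and the growth condition is also beside the point: the lemma uses only the monotonicity $F_{n,j}\ge\max\{F_{n+\kappa,j+1},F_{n,j+1}\}$, the saturation $F_{n,k}=F_{0,j_0^\psi}(T)$ for $k\le j_0^\psi$, and the inclusion $B_j^\psi(2^{n+\kappa})\subset\rho B_{j-1}^\psi(2^n)$.

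The paper's proof does not attempt pointwise decay at all; it telescopes $K_n$. The key one-line computation is to plug $s=s_{n+\kappa}^a(x)-1$ as a competitor into the infimum defining $K_n(c_1a2^n,x)$ and compare with the near-optimality of $s_{n+\kappa}^a(x)$ in $K_{n+\kappa}$. Using $F_{n,s-1}\ge F_{n+\kappa,s}$ and $B_s^\psi(x,2^{n+\kappa})\subset B_{s-1}^\psi(x,2^n)$, the $F$-terms cancel with the correct sign, and what survives is
\[
2^{-n}F_{0,j_0^\psi}(T)+K_{n+\kappa}(c_1a2^{n+\kappa},x)-K_n(c_1a2^n,x)\;\ge\;c_1a(2^\kappa-r)\,2^n r^{-s_{n+\kappa}^a(x)}\;=\;7c_1ar\cdot 2^n r^{-s_{n+\kappa}^a(x)}.
\]
Now summing over $n$ is legitimate because the left side telescopes (since $0\le K_n\le F_{0,j_0^\psi}(T)$) and $\sum_n 2^{-n}F_{0,j_0^\psi}(T)\lesssim F_{0,j_0^\psi}(T)$; the saturation case $s_{n+\kappa}^a(x)=j_0^\psi$ is handled separately by the same comparison with $F_{n,j_0^\psi-1}(T)=F_{0,j_0^\psi}(T)$. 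The missing idea in your proposal is precisely this: use the near-minimizer at the larger scale, shifted down by one in $j$, as a test value at the smaller scale, so that the increments of $K_n$---not any pointwise decay---absorb the sum.
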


\begin{proof}
Obviously, $0\leq K_{n}\leq F_{0,j^{\psi}_0}(T)$ and $s_n^{a}(x)\geq j^{\psi}_0$ for all $n\geq 0$.

If $s_{n+\kappa}^a(x)\neq j^{\psi}_0$, we have

   \begin{eqnarray*}
    & &2^{-n}F_{0,j^{\psi}_0}(T)+K_{n+\kappa}(c_1a2^{n+\kappa},x)-K_n(c_1a2^n,x)\\
    &\geq& c_1a(2^\kappa-r)2^nr^{-s_{n+\kappa}^a(x)}+F_{n,s_{n+\kappa}^a(x)-1}(B^{\psi}_{s_{n+\kappa}^a(x)-1}(x,2^{n}))\\& &-F_{n+\kappa,s_{n+\kappa}^a(x)}(B^{\psi}_{s_{n+\kappa}^a(x)}(x,2^{n+\kappa}))\\
    &\geq& c_1a(8r-r)2^nr^{-s_{n+\kappa}^a(x)}+F_{n+\kappa,s_{n+\kappa}(x)}(B^{\psi}_{s_{n+\kappa}^a(x)-1}(x,2^{n}))\\& &-F_{n+\kappa,s_{n+\kappa}^a(x)}(\rho B^{\psi}_{s_{n+\kappa}^a(x)-1}(x,2^{n}))\\
    &\gtrsim& ra2^nr^{-s_{n+\kappa}^a(x)}.
    \end{eqnarray*}

 Otherwise
 \begin{eqnarray*}
     & & 2^{-n} F_{0,j^{\psi}_0}(T)+K_{n+\kappa}(c_1a2^{n+\kappa},x)-K_n(c_1a2^n,x)\\
    &\geq& c_1a(2^\kappa-r)2^nr^{-j^{\psi}_0}+F_{n,j^{\psi}_0-1}(B^{\psi}_{j^{\psi}_0-1}(x,2^{n}))-F_{n+\kappa,j^{\psi}_0}(B^{\psi}_{j^{\psi}_0}(x,2^{n+\kappa}))\\
    &\geq& c_1a(2^\kappa-r)2^nr^{-j^{\psi}_0}+F_{n,j^{\psi}_0-1}(T)-F_{n+\kappa,j^{\psi}_0}(T)\\
    &\gtrsim& ra2^nr^{-s_{n+\kappa}^a(x)}.
 \end{eqnarray*}

The lemma is obtained by summing up these inequalities for all $n\geq 1$.
\end{proof}

\begin{theorem}\label{lower bound}
If the functions $\{F_{n,j}\}$ satisfies the growth condition for $c_2r$, where $c_2>0$ is a sufficiently large constant, then we have
    $$
    \gamma_\psi(T)\lesssim_r\left(F_{0,j^{\psi}_0}(T) + r^{-j^{\psi}_0(T)}\right).
    $$
\end{theorem}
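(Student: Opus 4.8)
\textbf{Proof proposal for Theorem \ref{lower bound}.}

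The plan is to obtain the bound by combining the contraction principle (Theorem \ref{Contraction Principle}) with the interpolation estimate of Lemma \ref{interpolation}, using the growth condition to verify the hypothesis of the contraction principle with the auxiliary functions $s_n(x) := s_n^a(x)$ for a suitably chosen small $a$. So the first step is to fix the dictionary: set $s_n(x) = s_n^a(x)$, where $s_n^a(x)$ is the (near-)optimizer in the definition of $K_n(c_1 a 2^n, x)$ appearing right before Lemma \ref{interpolation}. With this choice, Lemma \ref{interpolation} immediately gives
\[
\sup_{x\in T}\sum_{n\ge 0} 2^n r^{-s_n(x)} \;\lesssim_r\; \frac{1}{r}\left(\frac{F_{0,j^\psi_0}(T)}{a} + r^{-j^\psi_0}\right),
\]
which is exactly the ``error term'' we want to control in the conclusion of the contraction principle, up to the harmless $1/a$ factor (which will be absorbed since $a$ is a fixed constant once chosen).

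The second and main step is to verify the hypothesis of Theorem \ref{Contraction Principle}: that there is a constant $a<1$ with
\[
r^{-k^\psi_n(A)} \le C_1\left(a\, r^{-j^\psi_n(A)} + \sup_{x\in A} r^{-s_n(x)}\right)
\]
for every $n\ge 0$ and $A\subset T$. This is where the growth condition enters. If $k^\psi_n(A) = +\infty$ there is nothing to prove, so assume it is finite. By Lemma \ref{gc lemma 1}, choosing $\tau = 1$, we get points $(x_i)_{i\le N_n}\in A$ with $\psi_{k^\psi_n(A)+1}(x_i,x_{i'}) > 2^n$ for all $i\ne i'$; moreover by the definition of $k^\psi_n(A)$ these points lie in a union of $N_n$ balls $B^\psi_{k^\psi_n(A)}(u_\ell, 2^n)$, and after relabelling we may assume (at the cost of passing to a sub-collection, or by a pigeonhole/covering argument) that they are captured in the ``$C_2 B^\psi_j(u,2^n)$'' sense required by Definition \ref{growth conditon} with $j = k^\psi_n(A)$ and $C_2 = c_2 r$. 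Now apply the growth condition with $H_i := B^\psi_{j+2}(x_i, 2^{n+\kappa})$ and $F_{n,j} := F_{n,j}(\cdot)$ the given functions: since $\bigcup_i H_i \subset T$ and $F_{n,j}$ is non-decreasing and bounded by $F_{0,j^\psi_0}(T)$, we get
\[
F_{0,j^\psi_0}(T) \ge F_{n,j}\Bigl(\bigcup_{i\le N_n} H_i\Bigr) \ge C(r) 2^n r^{-j} + \min_{i\le N_n} F_{n+\kappa, j+2}(H_i).
\]
Rearranging, and using that $F_{n+\kappa,j+2}(B^\psi_{j+2}(x_i,2^{n+\kappa})) \ge F_{0,j^\psi_0}(T) - c_1 a 2^{n+\kappa} r^{-s_{n+\kappa}^a(x_i)} + (\text{small})$ coming from the definition of $K_{n+\kappa}$ and the near-optimality of $s^a$, one extracts an inequality of the shape $2^n r^{-k^\psi_n(A)} \lesssim_r 2^n r^{-s_{n+\kappa}(x_{i_0})} \le 2^n \sup_{x\in A} r^{-s_{n+\kappa}(x)}$ for the minimizing index $i_0$. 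A little care is needed to pass from $s_{n+\kappa}$ back to $s_n$ (using $j^\psi_n \ge k^\psi_n - O(1)$, the monotonicity $F_{n,j}\ge F_{n+\kappa,j+1}$, and the relations among the balls $B^\psi_j(2^n)$ assumed in the section), and the $a r^{-j^\psi_n(A)}$ term appears precisely because the growth-condition packing is only available when $j = k^\psi_n(A)$ exceeds $j^\psi_n(A)$ by enough; when it does not, one pays the $a r^{-j^\psi_n(A)}$ penalty directly. This gives the contraction hypothesis with some $C_1 = C_1(r)$ and $a$ of the form $c/c_2$, which can be made as small as we like by taking $c_2$ large.

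The final step is bookkeeping: apply Theorem \ref{Contraction Principle} to conclude
\[
\gamma_\psi(T) \le Poly(C_1,r)\left(a\,\gamma_\psi(T) + \sup_{x\in T}\sum_{n\ge 0} 2^n r^{-s_n(x)}\right).
\]
Choosing $a$ (hence $c_2$) so that $a\cdot Poly(C_1,r) \le 1/2$, we absorb the $a\gamma_\psi(T)$ term into the left side (this requires knowing $\gamma_\psi(T) < \infty$ a priori, which follows because $T$ may be taken finite by the definition of the expected supremum, or by a truncation argument), and then invoke Lemma \ref{interpolation} to bound the remaining sum, yielding $\gamma_\psi(T) \lesssim_r F_{0,j^\psi_0}(T) + r^{-j^\psi_0}$. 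Since $j^\psi_0 = j^\psi_0(T) - 1$, the term $r^{-j^\psi_0}$ is comparable to $r^{-j^\psi_0(T)}$ up to the factor $r$, absorbed into $\lesssim_r$; this gives the stated conclusion.

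\textbf{Main obstacle.} The delicate point is Step 2: correctly matching the indices in the growth condition (which is stated at level $j$, $j+1$, $j+2$ and generations $n$, $n+\kappa$) with the quantities $k^\psi_n(A)$, $j^\psi_n(A)$ and the interpolation scale $s_n^a$, and in particular verifying that the separated points produced by Lemma \ref{gc lemma 1} can be placed inside a single dilated ball $C_2 B^\psi_j(u,2^n)$ as Definition \ref{growth conditon} demands. Handling the ``dichotomy'' between $k^\psi_n(A)$ being much larger than $j^\psi_n(A)$ (where the growth condition gives the gain) versus the two being comparable (where the $a r^{-j^\psi_n(A)}$ term is used) is the crux of getting a clean contraction inequality with a controllable constant.
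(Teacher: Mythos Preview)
Your high-level plan---verify the hypothesis of Theorem \ref{Contraction Principle} using the growth condition, then feed in Lemma \ref{interpolation}---is exactly the paper's. But the execution in Step 2 has a genuine gap.

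The bound you use at the top, $F_{n,j}\bigl(\bigcup_i H_i\bigr)\le F_{0,j^\psi_0}(T)$, is too crude. Combined with the lower bound on $F_{n+\kappa,j+2}(H_{i_0})$ coming from the near-optimality of $s^a_{n+\kappa}$ (which, unwound correctly, reads $\ge F_{0,j^\psi_0}(T)-K_{n+\kappa}(c_1a2^{n+\kappa},x_{i_0})+c_1a2^{n+\kappa}r^{-s^a_{n+\kappa}(x_{i_0})}-O(2^{-n}F_{0,j^\psi_0}(T))$), it yields only
\[
C(r)\,2^n r^{-k^\psi_n(A)}\;\le\; K_{n+\kappa}(c_1a2^{n+\kappa},x_{i_0})+O\bigl(2^{-n}F_{0,j^\psi_0}(T)\bigr),
\]
and since $K_{n+\kappa}\le F_{0,j^\psi_0}(T)$ this gives no decay in $n$; your claimed conclusion $2^n r^{-k^\psi_n(A)}\lesssim 2^n r^{-s_{n+\kappa}(x_{i_0})}$ does not follow. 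The paper's remedy is to replace the crude upper bound by $F_{n,k^\psi_n(A)}\bigl(\bigcup_i H_i\bigr)\le F_{n,\delta}\bigl(B^\psi_\delta(x_{i_0},2^n)\bigr)$ with $\delta=j^\psi_n(A)-1$, using that all the $H_i$ sit inside this single ball; the right-hand side is then bounded via the infimum defining $K_n$, so that \emph{both} $K_n$ and $K_{n+\kappa}$ appear and the difference $\Delta_n(x_{i_0})=K_{n+\kappa}-K_n$ emerges. Accordingly the paper does \emph{not} take $s_n=s_n^a$: it sets
\[
r^{-s_n(x)}:=2^{-n}\bigl(\Delta_n(x)+2^{-n}F_{0,j^\psi_0}(T)+2^{-n}r^{-j^\psi_0(T)}\bigr)+Cr^{-s^a_{n+\kappa}(x)},
\]
and then $\sum_n2^nr^{-s_n(x)}$ is controlled because $\sum_n\Delta_n$ telescopes to at most $F_{0,j^\psi_0}(T)$ while the $s^a_{n+\kappa}$ part is handled by Lemma \ref{interpolation}. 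Your ``little care to pass from $s_{n+\kappa}$ to $s_n$'' is precisely this redefinition, and it is not optional.

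Separately, the ``pigeonhole/sub-collection'' device for placing the separated points in $C_2B^\psi_j(u,2^n)$ is the wrong mechanism (and would discard points you need). The argument is direct once the dichotomy is set up: if $r^{-j^\psi_n(A)}>c_2\,r^{-k^\psi_n(A)}$ the contraction inequality holds trivially via the term $c_2^{-1}r^{-j^\psi_n(A)}$; otherwise $B^\psi_{j^\psi_n(A)-1}(2^n)\subset c_2r\,B^\psi_{k^\psi_n(A)}(2^n)$, and since by definition of $j^\psi_n(A)$ all of $A$ already lies in each ball $B^\psi_{j^\psi_n(A)}(x_i,2^n)$, a short computation gives $\bigcup_i H_i\subset B^\psi_\delta(x_i,2^n)\subset c_2r\,B^\psi_{k^\psi_n(A)}(x_i,2^n)$ for every $i$, so Definition \ref{growth conditon} applies with $C_2=c_2r$ and all $N_n$ points retained.
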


\begin{proof}
    Put
    $$
    \Delta_n(x)=K_{n+\kappa}(c_1a2^{n+\kappa},x)-K_n(c_1a2^n,x),
    $$
    $$
    r^{-s_n(x)}=2^{-n}\left(\Delta_n(x)+2^{-n}F_{0,j^{\psi}_0}(T)+2^{-n}r^{-j^{\psi}_0(T)}\right)+Cr^{-s_{n+\kappa}^a(x)} \text{ for } n\geq 0.
    $$
    
    We want to prove $$r^{-k^{\psi}_n(A)}\leq Poly(r)\left(\left(ac_{2}+\frac{1}{c_{2}}\right)r^{-j^{\psi}_n(A)}+\sup\limits_{x\in A}r^{-s_{n}(x)}\right)$$ for all $n\geq 0$ and $A\subset T$. When $n=0$, it is easy to see because $r^{-k^{\psi}_0(T)}\leq r^{-j^{\psi}_0(T)}$. Thus, we assume that $k^{\psi}_n(A)<+\infty$ and $r^{-j^{\psi}_n(A)}\leq c_{2}r^{-k^{\psi}_n(A)}$. 
    
    Let $n\geq 1$. By Lemma \ref{gc lemma 1} there exist $(x_i)_{i\leq N_n} \in A$ such that 
    $$
    \psi_{k^{\psi}_n(A)+1}(x_i,x_j)> 2^{n} \text{ for } \forall i\neq j.
    $$

    Set $\varpi:= \sup\limits_{x\in A}r^{-s_{n+\kappa}^a(x)}$. If $ \varpi> r^{-(k^{\psi}_n(A)+2)}$,  then the statements in Theorem \ref{lower bound} is derived.

    If $\varpi \leq r^{-(k^{\psi}_n(A)+2)}$, then $s_{n+\kappa}^a(x)\geq k^{\psi}_n(A)+2$ for all $x\in A$. Let
    $$
    \delta = \min\{j^{\psi}_n(A),\min\limits_{x\in A}s_{n+\kappa}^a(x)\}-1=j^{\psi}_n(A)-1
    $$
    then we have 
    $$
    r^{-j^{\psi}_n(A)}+\varpi\geq r^{-(\delta+1)}\geq \frac{r^{-j^{\psi}_n(A)}+\varpi}{2}
    $$
    and
    $$
    r^{-\delta}=r^{-j^{\psi}_n(A)+1}\leq c_{2}rr^{-k^{\psi}_n(A)}.
    $$
    Set $H_i=B^{\psi}_{s_{n+\kappa}^a(x_i)}(x_i,2^{n+\kappa})\subset  B^{\psi}_{k^{\psi}_n(A)+2}(2^{n+\kappa})$. For $\forall y \in H_j$ and $\forall i \leq N_n$, 
    \begin{align*}
        y-x_i&= y-x_j+x_j-x_i\\
        &\in B^{\psi}_{j^{\psi}_n(A)+1}(2^n)+B^{\psi}_{j^{\psi}_n(A)}(2^n)\\
        &\subset B^{\psi}_{j^{\psi}_n(A)-1}(2^n)\subset B^{\psi}_\delta(2^n).
    \end{align*}
    Thus $\bigcup\limits_{i\leq N_n}H_i\subset B^{\psi}_\delta(x_i,2^{n})\subset c_{2}rB^{\psi}_{k^{\psi}_n(A)}(x_i,2^{n})$ for all $i\leq N_n$.

    Then we have
    \begin{align*}
        C(r)2^nr^{-k^{\psi}_n(A)}
        &\leq F_{n,k^{\psi}_n(A)}(\bigcup\limits_{i\leq N_n}H_i)-F_{n+\kappa,k^{\psi}_n(A)+2}(H_j) \text{ for some } j \leq N_n\\
        &\leq F_{n,\delta}(B^{\psi}_\delta(x_j,2^{n}))-F_{n+\kappa,s_{n+\kappa}^a(x_j)}(B^{\psi}_{s_{n+\kappa}^a(x_j)}(x_j,2^{n+\kappa}))\\
        &\leq c_1a2^{n}r^{-\delta}+F_{0,j^{\psi}_0}(T)-K_n(c_1a2^n,x_j)+K_{n+\kappa}(c_1a2^{n+\kappa},x_j)\\
        &-F_{0,j^{\psi}_0}(T)-c_1a2^{n+\kappa}r^{-s_{n+\kappa}^a(x_j)}+2^{-n+1}F_{0,j^{\psi}_0}(T).
    \end{align*}
We have 
    \begin{align*}
    r^{-k_n(A)}&\leq Poly(r)\left(ac_{2}r^{-j^{\psi}_n(A)}+\sup\limits_{x\in A}2^{-n}\left(\Delta_n(x)+2^{-n}F_{0,j^{\psi}_0}(T)\right)+\sup\limits_{x\in A}ar^{-s_n^a(x)}\right)\\
    &\leq Poly(r)\left(ac_{2}r^{-j^{\psi}_n(A)}+\sup\limits_{x\in A}r^{-s_n(x)}\right),
    \end{align*}
    where we can assume $a<1$.

    Combining Theorem \ref{Contraction Principle} and Lemma \ref{interpolation}, we finish the proof of this theorem because we can
choose the appropriate numbers $a$ and $c_{2}$.
\end{proof}

\section{The proof of the main result}

In this section, we present the proof of Theorem \ref{dmr}. The two parameters of parameterized separation trees do not have an explicit quantitative relationship, which makes them difficult to utilize effectively in proofs.  Therefore, we introduce a subclass of parameterized separation trees, the iterative organized trees,  which achieve the desired goal by controlling the two parameters. Through the use of iterative organized trees, we can fully characterize the expected supremum of log-concave-tailed canonical processes.  Before presenting the definition of an iterative organized tree, we need to introduce some other definitions and lemmas. 

Put
\begin{equation}    
\label{def 2.1}
\bar{\varphi}_j\left(s, t\right)=\varphi_j\left(s/4, t/4\right) .
\end{equation}
Let $\Tilde{j}_0$ be the largest number such that for $\forall s, t \in T, \bar{\varphi}_{\tilde{j}_0}\left(s, t\right) \leq 1$.

From equations (8.1) and (8.9) in Chapter~8 of \cite{talbook}
, we have the following lemma. 

\begin{lemma}\label{lemma 1}
$\left(\bar{\varphi}_j\right)_{j\in \mathbb{Z}}$ have the following properties:
$$
\bar{\varphi}_j: T \times T \rightarrow \mathbb{R}^{+} \cup\{\infty\}, \bar{\varphi}_{j+1} \geq \bar{\varphi}_j \geq 0, \bar{\varphi}_j\left(s, t\right)=\bar{\varphi}_j\left(t, s\right),
$$
$$
\forall s, t \in T, \forall j \in \mathbb{Z}, \bar{\varphi}_{j+1}\left(s, t\right) \geq r \bar{\varphi}_j\left(s, t\right) .
$$
\end{lemma}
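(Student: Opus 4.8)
The plan is to reduce all four assertions to elementary structural properties of the functionals $\mathcal{N}_u$ and the bodies $B(u)$, and then transport them through the harmless substitution $s\mapsto s/4$. First I would record three facts that are immediate from the definitions: (a) for each fixed $u>0$ the map $t\mapsto\mathcal{N}_u(t)$ is positively homogeneous, since it is a supremum of the linear functionals $t\mapsto\sum_{i}t_i a_i$ over a fixed constraint set; (b) $u\mapsto\mathcal{N}_u(t)$ is nondecreasing, because $\hat{U}_i\ge 0$ (both pieces of $\hat{U}_i$ are nonnegative) forces the constraint sets $\{a:\sum_{i}\hat{U}_i(a_i)\le u\}$ to be nested in $u$; and (c) $\mathcal{N}_u(-t)=\mathcal{N}_u(t)$, because each $\hat{U}_i$ depends only on $|x|$ and is therefore even, so the constraint set is symmetric and the substitution $a_i\mapsto -a_i$ leaves the supremum unchanged. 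In particular each $B(u)$ is a symmetric set containing $0$ (since $\mathcal{N}_u(0)=0\le u$).

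With (c), $B(u)$ is symmetric, so $s-t\in r^{-j}B(u)$ if and only if $t-s\in r^{-j}B(u)$; hence $\varphi_j(s,t)=\varphi_j(t,s)$ and therefore $\bar{\varphi}_j(s,t)=\bar{\varphi}_j(t,s)$. The codomain assertion is just the observation that $\varphi_j(s,t)$ is the infimum of a subset of $(0,\infty)$, taken to be $+\infty$ when that subset is empty; and $\bar{\varphi}_j\ge 0$ is automatic, while $\bar{\varphi}_j(s,s)=0$ because $0\in B(u)$ for every $u>0$.

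The only substantive point is the contraction $\bar{\varphi}_{j+1}\ge r\bar{\varphi}_j$, and I would prove it via the inclusion $r^{-1}B(u)\subset B(u/r)$ for every $u>0$ (here only $r\ge 4>1$ is used): if $t\in B(u)$, then by (a) and (b), $\mathcal{N}_{u/r}(r^{-1}t)=r^{-1}\mathcal{N}_{u/r}(t)\le r^{-1}\mathcal{N}_u(t)\le r^{-1}u=u/r$, so $r^{-1}t\in B(u/r)$. Consequently $r^{-(j+1)}B(u)=r^{-j}\bigl(r^{-1}B(u)\bigr)\subset r^{-j}B(u/r)$, so whenever $s-t\in r^{-(j+1)}B(u)$ we also have $s-t\in r^{-j}B(u/r)$, giving $\varphi_j(s,t)\le u/r$; taking the infimum over all admissible $u$ yields $\varphi_j(s,t)\le r^{-1}\varphi_{j+1}(s,t)$, i.e.\ $\varphi_{j+1}\ge r\varphi_j$. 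Replacing $(s,t)$ by $(s/4,t/4)$ gives $\bar{\varphi}_{j+1}\ge r\bar{\varphi}_j$, and since $\bar{\varphi}_j\ge 0$ and $r\ge 4$ this also delivers the monotonicity $\bar{\varphi}_{j+1}\ge\bar{\varphi}_j$.

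The main thing to watch — though it is not really an obstacle — is the bookkeeping around degenerate values: when the defining set is empty one has $\varphi_j=+\infty$, for which all the inequalities hold vacuously, and when $s=t$ one has $\varphi_j=0$. Apart from that, the entire lemma rests on the single inclusion $r^{-1}B(u)\subset B(u/r)$, which is exactly the point at which the dilation factor $r$ in $r^{-j}B(u)$ is calibrated against the index shift; everything else is the discussion in Chapter~8 of \cite{talbook} made explicit.
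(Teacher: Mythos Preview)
Your argument is correct: the key step is precisely the inclusion $r^{-1}B(u)\subset B(u/r)$, which you derive cleanly from homogeneity and monotonicity of $\mathcal{N}_u$, and everything else follows. The paper does not give its own proof of this lemma---it simply points to Chapter~8 of \cite{talbook}---so your write-up is exactly an explicit version of that deferred argument, as you note in your final sentence.
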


Moreover, we have

\begin{lemma}\label{lemma 2}
     If  $\bar{\varphi}_{j\left(t\right)}\left(t, t^{\prime}\right) > 2^{n}, 
\bar{\varphi}_{j\left(t^{\prime}\right)}\left(t, t^{\prime}\right) > 2^{n}$, 
  then $$\left(B_{j\left(t\right)}\left(t,2^{n}\right)+B_{j\left(t\right)}\left(2^{n}\right)\right)\cap \left(B_{j\left(t^{\prime}\right)}\left(t^{\prime}, 2^{n}\right)+B_{j\left(t^{\prime}\right)}\left(2^{n}\right)\right)=\varnothing. $$
\end{lemma}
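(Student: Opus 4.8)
The plan is to argue by contradiction, exploiting the convex-geometric structure behind the maps $\varphi_j$. First I would record the facts I need. Since $\varphi_j(s,t)$ depends on the pair only through $s-t$, every ball is a translate of a ball centred at $0$, i.e. $B_j(s,c)=s+B_j(0,c)$; and since the sets $B(u)$ are convex, symmetric and increasing in $u$ (Chapter~8 of \cite{talbook}), one has $\varphi_j(s,t)\le c$ exactly when $s-t\in r^{-j}B^{+}(c)$, where $B^{+}(c):=\bigcap_{u>c}B(u)\supseteq B(c)$ is again convex and symmetric and contains $0$. Thus each $B_j(s,c)$ is a translate of the convex symmetric set $r^{-j}B^{+}(c)$, and as $j$ varies these are pure dilations of the fixed set $B^{+}(c)$. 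Finally, since the statement is symmetric under exchanging $(t,j(t))$ with $(t',j(t'))$ — an exchange that preserves both hypotheses and the conclusion — I may assume $j(t)\le j(t')$, so that $r^{-j(t')}B^{+}(2^n)\subset r^{-j(t)}B^{+}(2^n)$.

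The computation is then short. Writing $V:=B^{+}(2^n)$ and using convexity,
\[
B_{j(t)}(t,2^n)+B_{j(t)}(2^n)=t+r^{-j(t)}V+r^{-j(t)}V=t+2r^{-j(t)}V ,
\]
and similarly the second set equals $t'+2r^{-j(t')}V$. If these share a point $z$, then $z-t\in 2r^{-j(t)}V$ and $z-t'\in 2r^{-j(t')}V$, whence, using symmetry of $V$,
\[
t-t'=(z-t')-(z-t)\in 2r^{-j(t')}V+2r^{-j(t)}V\subset 2r^{-j(t)}V+2r^{-j(t)}V=4r^{-j(t)}V .
\]

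It remains to translate this back. From $t-t'\in 4r^{-j(t)}V$ we get $\tfrac14(t-t')\in r^{-j(t)}V=r^{-j(t)}B^{+}(2^n)$, hence $\varphi_{j(t)}(t/4,t'/4)\le 2^n$, which by Definition~\ref{def 2.1} is precisely $\bar\varphi_{j(t)}(t,t')\le 2^n$ — contradicting the hypothesis $\bar\varphi_{j(t)}(t,t')>2^n$. Therefore no such $z$ exists and the two sets are disjoint.

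There is no deep obstacle here; the two points requiring care are, first, the equivalence ``$\varphi_j(s,t)\le c\iff s-t\in r^{-j}B^{+}(c)$'' together with the convexity, symmetry and monotonicity of the family $\{B(u)\}$, which are the standard properties recorded in Chapter~8 of \cite{talbook}; and second, the accounting of constants — the factor $4$ produced by the two Minkowski ``doublings'' (once from adding $B_{j(t)}(t,2^n)$ to $B_{j(t)}(2^n)$, once from absorbing the smaller ball $r^{-j(t')}V$ into the larger $r^{-j(t)}V$) is exactly cancelled by the factor $4$ built into $\bar\varphi_j(s,t)=\varphi_j(s/4,t/4)$, which is what makes the final containment land at the scale $r^{-j(t)}$ that the hypothesis forbids. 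This last point also explains why both hypotheses appear in the statement: which one we contradict depends on which of $j(t),j(t')$ is the smaller, and that is not known a priori.
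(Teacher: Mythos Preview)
Your proof is correct and follows essentially the same approach as the paper's: assume without loss of generality that $j(t)\le j(t')$, take a hypothetical common point, and telescope to show $t-t'\in 4r^{-j(t)}B^{+}(2^n)$, which yields $\bar\varphi_{j(t)}(t,t')\le 2^n$ and a contradiction. The only cosmetic difference is that the paper breaks $t-t'$ into three pieces via intermediate points $t_1,t_2$ in the two balls, whereas you use the convexity identity $V+V=2V$ to collapse each Minkowski sum first and then split $t-t'$ in two pieces through the common point $z$; both routes arrive at the same factor $4$ that is absorbed by the definition $\bar\varphi_j(s,t)=\varphi_j(s/4,t/4)$.
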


\begin{proof}
     Suppose $j\left(t\right)\leq j\left(t^{\prime}\right)$, then $B_{j\left(t^{\prime}\right)}\left(2 ^ { n  }\right)\subset B _{j\left(t\right)}\left(2^{n}\right)$.
     If $$\left(B_{j\left(t\right)}\left(t,2^{n}\right)+B_{j\left(t\right)}\left(2^{n}\right)\right)\cap \left(B_{j\left(t^{\prime}\right)}\left(t^{\prime}, 2^{n}\right)+B_{j\left(t^{\prime}\right)}\left(2^{n}\right)\right) 
 \neq \varnothing, $$ then there exists a point $x\in \left(B_{j\left(t\right)}\left(t,2^{n}\right)+B_{j\left(t\right)}\left(2^{n}\right)\right)\cap \left(B_{j\left(t^{\prime}\right)}\left(t^{\prime}, 2^{n}\right)+B_{j\left(t^{\prime}\right)}\left(2^{n}\right)\right)$. Thus, we can find $t_1 \in B _{j\left(t\right)}\left(t,2^n\right)$, $t_2 \in B_{j\left(t^{\prime}\right)}\left(t^{\prime},2^{n }\right)$ such that
 $$
 t-t_1\in B _{j\left(t\right)}\left(2^{n}\right), t_1 - t_2 \in B _{j\left(t\right)}\left(2^{n}\right), t_2-t^{\prime}\in B _{j\left(t\right)}\left(2^{n}\right).
 $$
 So $t-t^{\prime}\in 4 B _{j\left(t\right)}\left(2^{n}\right),$ namely $\bar{\varphi}_{j\left(t\right)}\left(t, t^{\prime}\right)\leq 2^{n}$. This contradicts the assumption that $\bar{\varphi}_{j\left(t\right)}\left(t, t^{\prime}\right) > 2^{n}$.
\end{proof}

We assume that $\mathcal{T}$ is a parameterized separation tree.  Recall that there are two parameters in  a parameterized separation tree,  $\mathbf{n}:\mathcal{T} \rightarrow \mathbb{N}\bigcup \{-1\}$ and $\mathbf{j}:\mathcal{T} \rightarrow \mathbb{Z}\bigcup \{\infty\}$.  $\mathbf{j}(\textsf{A})=J$, $\mathbf{n}(\textsf{A})=-1$ if $\textsf{A}$ is the largest element of $\mathcal{T}$. Moreover, if  $|A|<N_{\mathbf{n}\left(c(A)\right)}$,  then $\mathbf{j}\left(c(A)\right)=\infty$,  every set in $c(A)$ contains exactly one point, and does not have descendants.

Now, we define the iterative organized tree $\mathcal{T}$ as $\Gamma _{\left(\mathbf{n}, \mathbf{j},J\right)}\left(T\right)$. Let $\tilde{j}_0$ be the largest number such that $\forall s, t \in T, \bar{\varphi}_{\tilde{j}_0}\left(s, t\right) \leq 1$. If $J\leq \tilde{j}_0$, we put  $\Gamma _{\left(\mathbf{n},  \mathbf{j},J\right)}\left(T\right)=:\Gamma _{\left(\mathbf{n}, \mathbf{j}\right)}\left(T\right)$.

Every generation of the iterative organized tree is denote as $\mathcal{B}_{k}$, and  $\Gamma _{\left(\mathbf{n}, \mathbf{j},J\right)}\left(T\right)=:\{\mathcal{B}_{k}\}_{k\ge 0}$. We define the iterative organized tree through $\mathcal{B}_{k}$.

Firstly, we assume that $\textsf{A}$ is the largest element of $\mathcal{T}$, and set $\mathbf{j}\left(\textsf{A}\right) = +\infty$, $\mathbf{n}\left(\textsf{A}\right) = -1$. We set $\mathcal{B}_{-1}=\{\textsf{A}\}$, and  $\mathcal{B} _{0} $ is the sets of children the set in $\mathcal{B}_{-1}$.  

   Given an integer $n^{(0)}_0> 0$, every generation of the iterative organized tree is constructed as follows. 
   
   If $|\textsf{A}|< N_{n_0^{\left(0\right)}}$, $\mathcal{B} _{0} = \{ \{a\} : a\in \textsf{A}\} = c\left(\textsf{A}\right)$, and for $\{a\}\in \textsf{A}$, $\mathbf{j}(\{a\})=\infty$, and $\mathbf{n}(\{a\})=n^{(0)}_0$.

    If $|\textsf{A}|\ge  N_{n_0^{\left(0\right)}}$,   we  choose arbitrary $N_{n_0^{\left(0\right)}}$ points $\left(t_l\right)_{l\leq N_{n_0^{\left(0\right)}}} \in \textsf{A}$.  We denote  $T _{n_{0}^{\left(0\right)}}\left(\textsf{A}\right) = \{t_l : 0\le  l\leq N_{n_0^{\left(0\right)}}-1\}$. For  any $t\in T _{n_{0}^{\left(0\right)}}\left(A\right)$, there exists a  sequence of integers,  $\left(j\left(t\right)\right)_{t\in T _{n_{0}^{\left(0\right)}}\left(A\right)}$  satisfies that, 
     $$\forall t^{\prime}\neq t\in T _{n_{0}^{\left(0\right)}}\left(A\right),\bar{\varphi}_{j\left(t\right)}\left(t, t^{\prime}\right) > 2^{n_0^{\left(0\right)}}.$$
     
    Define $$\mathbf{j}\left(t\right)=\inf \{j\left(t\right)\in \mathbb{Z}:\bar{\varphi}_{j\left(t\right)}\left(t, t^{\prime}\right) > 2^{n_0^{\left(0\right)}} \text{ for }\forall t^{\prime}\neq t\in T _{n_{0}^{\left(0\right)}}\left(A\right) \text{ and }j\left(t\right)\geq \max\{\Tilde{j}_0,J\}\}.$$

Furthermore, set
\begin{align*}
& A_t = \textsf{A}\cap B _ { j \left( t \right) } \left( t , 2 ^ { n_{0}^{\left(0\right)}  } \right) \text{ for } \forall t\in T_{n_{0}^{\left(0\right)}};\\
&\mathcal{B} _{0} = \{A_{t}:t\in T _{n_{0}^{\left(0\right)}}\} = c\left(\textsf{A}\right);\\
& \mathbf{j}\left(A_{t}\right) = \mathbf{j}\left(t\right),~\mathbf{n}\left(A_t\right)=n_0^{\left(0\right)}.
\end{align*}

According to the inductive method, we define sequentially.  If  $ \mathcal{B} _ {k}$ have been constructed, for $ A_m \in \mathcal{B} _ {k},$  we can choose an arbitrary integer $n_m^{\left(k+1\right)}> \mathbf{n}(A_m)$.

If $|A_m|< N_{n_m^{\left(k+1\right)}}$, we set  $\{ \{a\} : a\in A_m\} = c\left(A_m\right),$ and $ \mathbf{j}_{\{a\}} = \infty~ \mathbf{n}\left(\{a\}\right)=n_m^{\left(k+1\right)}.$

If $|A_m|\ge  N_{n_m^{\left(k+1\right)}}$,   we  choose arbitrary  $N_{n_m^{\left(k+1\right)}}$
points $\left(t_l\right)_{l\leq N_{n_m^{\left(k+1\right)}}}\in A_m$.   We denote  $T _{n_m^{\left(k+1\right)}}\left(A_m\right)  = \{t_l : 0\le  l\leq N_{n_m^{\left(k+1\right)}-1}\}$. For  any $t\in t\in T _{n_{m}^{\left(k+1\right)}}\left(A_m\right)$, there exists a  sequence of integers,  $\left(j\left(t\right)\right)_{t\in T _{n_{m}^{\left(k+1\right)}}\left(A_m\right)}$,  satisfies that, 
     $$\forall t^{\prime}\neq t\in T _{n_m^{\left(k+1\right)}}\left(A_m\right),\bar{\varphi}_{j\left(t\right)}\left(t, t^{\prime}\right) > 2^{n_m^{\left(k+1\right)}},$$
     and 
     $$B_{j\left(t\right)}\left(2^{n_m^{\left(k+1\right)}}\right)\subset B_{\mathbf{j}\left(A_m\right)}\left(2^{\mathbf{n}\left(A_m\right)}\right).$$
    Define 
    \begin{eqnarray*}
\mathbf{j}\left(t\right)&=&\inf \{j\left(t\right)\in \mathbb{Z}: \bar{\varphi}_{j\left(t\right)}\left(t, t^{\prime}\right) > 2^{n_m^{\left(k+1\right)}} 
\text{ and } 
B_{j\left(t\right)}\left(2^{n_m^{\left(k+1\right)}}\right)\subset B_{\mathbf{j}\left(A_m\right)}\left(2^{\mathbf{n}\left(A_m\right)}\right) \\
& &~~~~~\text{ for } \forall t^{\prime} \neq t \in T _{n_{m}^{\left(k+1\right)}}\left(A_m\right)\}.
\end{eqnarray*}

Furthermore, define
\begin{align*}
& A_{t} = A_m\cap B_{j\left(t\right)}\left(t,2^{n_{m}^{\left(k+1\right)}}\right) \text{ for } t\in T _{n_m^{\left(k+1\right)}}\left(A_m\right);\\
& \{A_{t}; t\in T _{n_m^{\left(k+1\right)}}\left(A_m\right)\} = c\left(A_m\right);\\
& \mathbf{j}\left(A_{t}\right) = \mathbf{j}\left(t\right),~\mathbf{n}\left(A_t\right)=n_m^{\left(k+1\right)},
\end{align*}
and  $$\mathcal{B}_{k+1}=\bigcup_{A\in \mathcal{B}_k } c\left(A\right).$$

We can obtain a array $\{n_{m}^{(k)}\}_{k,m\ge 0}\subset \mathbb{N}$, which  is related to $\{\mathcal{B}_{k}\}_{k\ge 0}$. For each $k\ge 0$, if $A\in \mathcal{B}_k$, $\mathbf{n}(A)\in \{n^{(k)}_{m}\}_{m\ge 0}$.

\begin{figure}[htbp]
    \centering
    \includegraphics[width=0.6\textwidth]{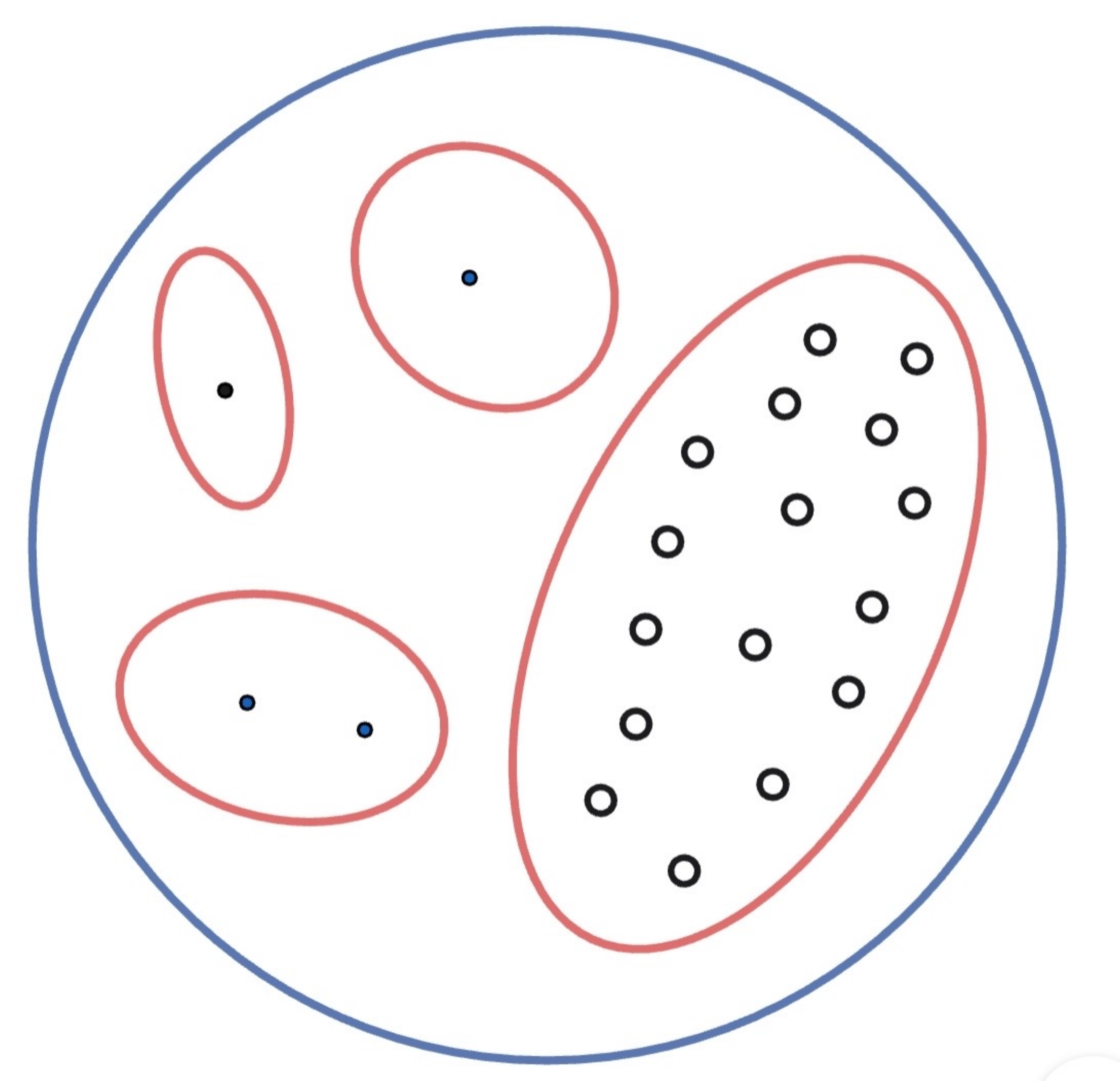}
    \caption{ The iterative organized tree.}
    \label{fig:demo}
\end{figure}

Define  the size of the tree:
$$
\rho\left(\Gamma _{\left(\mathbf{n}, \mathbf{j},J\right)}\left(T\right)\right)=\inf _{t \in \Gamma _{\left(\mathbf{n}, \mathbf{j},J\right)}\left(T\right)}\sum_{t\in A, A\in \cup_{i\ge 0}\mathcal{B}_i}\log\left(\left|c\left(p\left(A\right)\right)\right|\right)r^{-\mathbf{j}\left(A\right)}.
$$
We will obtain the following result. 

\begin{theorem}\label{mr}
Under conditions (\ref{1}) and (\ref{2}) we define the size of $T$:
$$
Size\left(T\right)=\sup \{\rho\left(\Gamma _{\left(\mathbf{n}, \mathbf{j}\right)}\left(T\right)\right): \Gamma _{\left(\mathbf{n}, \mathbf{j}\right)}\left(T\right)\text{ is a iterative organized tree of }~T\}.
$$
Then we have
$$
     Size\left(T\right) \sim_r   \textsf{E}  \sup _{t \in T} X_t.
$$ 
\end{theorem}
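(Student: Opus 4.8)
The plan is to establish the double equivalence
\[
Size(T)\ \sim_r\ \gamma_{\bar{\varphi}}(T)\ \sim_r\ \textsf{E}\sup_{t\in T}X_t ,
\]
where $\gamma_{\bar{\varphi}}(T)$ is the generic chaining functional attached to the family $(\bar{\varphi}_j)_{j\in\mathbb{Z}}$ of Definition \ref{def 2.1} (so $\psi=\bar{\varphi}$ in the notation of Section 3; recall $\bar{\varphi}_j\le\varphi_j$, since $\mathcal{N}_u$ is positively homogeneous so $B(u)$ is closed under contraction toward the origin). The right-hand equivalence is essentially classical. For $\textsf{E}\sup_{t\in T}X_t\lesssim_r\gamma_{\bar{\varphi}}(T)$: for any admissible sequence $(\mathcal{A}_n)_{n\ge0}$ and $s,s'\in A\in\mathcal{A}_n$ one has $s/4-s'/4\in r^{-j^{\psi}_n(A)}B(2^n)$, hence $\|X_s-X_{s'}\|_{2^n}\le 4C\,2^nr^{-j^{\psi}_n(A)}$ by \eqref{| |< B}, and the generic chaining inequality gives $\textsf{E}\sup_{t\in T}X_t\lesssim\sup_t\sum_n 2^nr^{-j^{\psi}_n(A_n(t))}$; take the infimum over admissible sequences. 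For $\gamma_{\bar{\varphi}}(T)\lesssim_r\textsf{E}\sup_{t\in T}X_t$: feed into $\gamma_{\bar\varphi}$ the admissible sequence and labels $j_n(\cdot)$ of Theorem \ref{them 1.1}; since $\bar\varphi_{j_n(A)}(s,s')\le\varphi_{j_n(A)}(s,s')\le 2^n$ forces $j^{\psi}_n(A)\ge j_n(A)$, we get $\gamma_{\bar\varphi}(T)\le\sup_t\sum_n 2^nr^{-j_n(A_n(t))}\lesssim_{C_0} r\,\textsf{E}\sup_{t\in T}X_t$. It therefore remains to prove $Size(T)\sim_r\gamma_{\bar\varphi}(T)$, which is the new content.

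\emph{The bound $Size(T)\lesssim_r\textsf{E}\sup_{t\in T}X_t$.} I would argue probabilistically, by induction downward along an arbitrary iterative organized tree $\Gamma_{(\mathbf{n},\mathbf{j})}(T)$. The essential step is a minoration-and-superposition estimate: for a node $A$ with children $A_1,\dots,A_m$ ($m=|c(A)|$), centres $t_1,\dots,t_m$ and $\bar\varphi_{\mathbf{j}(A_i)}(t_i,t_{i'})>2^{\mathbf{n}(A_i)}$ for $i\ne i'$,
\[
\textsf{E}\sup_{t\in\bigcup_i A_i}X_t\ \gtrsim_r\ \min_{i\le m}\Bigl(\log m\cdot r^{-\mathbf{j}(A_i)}+\textsf{E}\sup_{t\in A_i}X_t\Bigr).
\]
This comes from a conditioning argument: on an event whose probability is bounded below by a negative power of $m$, the coordinates separating the $t_i$ drive the increments $X_{t_i}$ pairwise apart by $\gtrsim r^{-\mathbf{j}(A_i)}$ — here the concentration estimates recalled in Section 2 and the quarter-scaling $\bar\varphi_j(s,t)=\varphi_j(s/4,t/4)$, which leaves room for superposition, are used — while the disjointness condition (6) of a parameterized separation tree, which for the chosen centres is a form of Lemma \ref{lemma 2}, lets one add the local suprema over the $A_i$ without interaction. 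Iterating from the leaves (where the branch sum in $\rho$ is empty) up to the root, and taking the infimum over root-to-leaf branches, yields $\textsf{E}\sup_{t\in T}X_t\gtrsim_r\rho(\Gamma_{(\mathbf{n},\mathbf{j})}(T))$; the supremum over all such trees is $Size(T)$.

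\emph{The bound $\gamma_{\bar\varphi}(T)\lesssim_r Size(T)$.} For this I would invoke the growth-condition machinery of Section 3 with $\psi=\bar\varphi$. Define $F_{n,j}(H)$ to be the largest value the size functional $\rho$ takes over iterative organized trees built on $H$ whose root carries label $\mathbf{j}=j$ and whose first generation lies at scale $\mathbf{n}\ge n$, suitably truncated (and rescaled by a constant) so that the structural requirements $F_{n,j}\ge\max\{F_{n+\kappa,j+1},F_{n,j+1}\}$ and $F_{n,j}\le F_{0,j^{\psi}_0}$ of Definition \ref{growth conditon} hold; note $F_{0,j^{\psi}_0}(T)$ is, up to a constant, exactly $Size(T)$. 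The growth inequality $F_{n,j}(\bigcup_i H_i)\ge C(r)2^nr^{-j}+\min_i F_{n+\kappa,j+2}(H_i)$ is then checked by direct construction: given $N_n$ points $t_i$ with $\bar\varphi_{j+1}(t_i,t_{i'})>2^n$ and sets $H_i\subset B^{\psi}_{j+2}(t_i,2^{n+\kappa})$, form on $\bigcup_i H_i$ the tree whose root splits into the $N_n$ blocks $H_i$ — taking exactly $N_n$ children gives condition (5), the separation of the $t_i$ together with Lemma \ref{lemma 2} gives the disjointness condition (6), the nesting condition (4) is built into the definition of $\mathbf{j}(t_i)$ in an iterative organized tree, and the fresh level contributes $\log N_n\cdot r^{-j}\sim 2^nr^{-j}$ — and below each block hang an optimal subtree of $H_i$; the shifts $n\mapsto n+\kappa$ and $j\mapsto j+2$ encode the passage to a finer scale and to a child's label. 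With the growth condition verified for $c_2r$, Theorem \ref{lower bound} gives $\gamma_{\bar\varphi}(T)\lesssim_r F_{0,j^{\psi}_0}(T)+r^{-j^{\psi}_0(T)}\lesssim_r Size(T)$, the residual diameter term $r^{-j^{\psi}_0(T)}$ being of lower order and absorbed.

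\emph{Main obstacle.} I expect the verification of the growth condition for $F_{n,j}$ to be the main difficulty: one must take the centres $t_i$ to be the well-separated family produced by Lemma \ref{gc lemma 1}, check that they can be chosen so that the balls $B_{\mathbf{j}(t_i)}(2^{\mathbf{n}})$ simultaneously nest inside the parent ball (condition (4) of a parameterized separation tree), and reconcile the two otherwise free parameters $\mathbf{n},\mathbf{j}$ with the rigid shifts $n\mapsto n+\kappa$, $j\mapsto j+2$ imposed by Definition \ref{growth conditon} — it is precisely to pin down this relationship that the subclass of iterative organized trees is introduced. Throughout, the regularity hypotheses \eqref{1} and \eqref{2} are needed to secure the ball inclusions assumed in Section 3 (such as $B^{\psi}_j(2^{n+\kappa})\subset\rho B^{\psi}_{j-1}(2^n)$ for $\psi=\bar\varphi$), without which neither Theorem \ref{lower bound} nor the superposition step would be available; inside Theorem \ref{lower bound}, Lemma \ref{interpolation} and Theorem \ref{Contraction Principle} are the two mechanisms that turn the growth condition into the bound on $\gamma_{\bar\varphi}(T)$.
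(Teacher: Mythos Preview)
Your argument for $\textsf{E}\sup_{t\in T}X_t\lesssim_r Size(T)$ via the growth-condition machinery matches the paper exactly: the paper defines $F_{n,j}$ by \eqref{nf}, verifies the growth condition by the same tree-splicing construction you describe (Proposition~\ref{prop 3}), and then invokes Theorem~\ref{lower bound} together with Theorem~\ref{them 2.4} (Proposition~\ref{prop 2}), finishing by absorbing $r^{-\tilde j_0}$ into $Size(T)$ (Theorem~\ref{them 2.5}).

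For the reverse bound $Size(T)\lesssim_r\textsf{E}\sup_{t\in T}X_t$ you take a genuinely different route. The paper does \emph{not} argue probabilistically along the tree; instead it passes through Fernique's functional, proving the chain $Size(T)\lesssim_r\sup_\mu\int_T I_\mu(t)\,d\mu(t)\lesssim_r\textsf{E}\sup_{t\in T}X_t$. The second inequality (Theorem~\ref{them 2.1}) is a direct comparison with the admissible sequence of Theorem~\ref{them 1.1}. The first (Theorem~\ref{them 2.2}) is purely geometric: for an iterative organized tree one shows, using the disjointness of the tubes $A+B_{\mathbf{j}(A)}(2^{\mathbf{n}(A)})$ across each generation (Proposition~\ref{proposition 1}), that any probability measure $\mu$ supported on $S_{\Gamma}$ satisfies $\mu(A_n(t))\le N_{\mathbf{n}(A_n(t))}^{-1}$, whence $\rho(\Gamma)\le I_\mu(t)$ pointwise and one integrates. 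Your direct inductive minoration $\textsf{E}\sup_{\cup_i A_i}X_t\gtrsim\min_i(\log m\cdot r^{-\mathbf{j}(A_i)}+\textsf{E}\sup_{A_i}X_t)$ is in the spirit of the growth condition for the functional $H\mapsto\textsf{E}\sup_H X_t$ established in Chapter~8 of \cite{talbook}, and should ultimately work; note however that the children of a node carry \emph{different} labels $\mathbf{j}(A_i)$, so one must first reduce to the common scale $j=\min_i\mathbf{j}(A_i)$ and check that the centres sit in a ball of controlled dilation at that scale --- this is where your sketch is thinnest. The paper's measure-theoretic detour sidesteps this issue entirely and yields Theorem~\ref{them 1.2} (the equivalence with Fernique's functional) as a byproduct, at the cost of an extra intermediate object; your approach is more direct but leans on a Sudakov-plus-superposition lemma that you would still have to prove.
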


To prove Theorem \ref{mr}, we require the following functional $I_\mu\left(t\right)$, often referred to as Fernique's functional. Define $\bar{j}_0$ as the largest number such that
$$
\forall s, t \in T, \varphi_j\left(s, t\right) \leq 1.
$$

Given $t \in T$ and a probability measure $\mu$ on $T$,   define $\bar{j}_0\left(t\right)=\bar{j}_0 \text{ for }\forall t\in T$, and for $n \geq 1$,

\begin{equation}
\label{new11}
\bar{j}_n\left(t\right)=\sup \left\{j \in \mathbb{Z} ; \mu\left(\left\{s \in T ; \varphi_j\left(t, s\right) \leq 2^n\right\}\right) \geq N_n^{-1}\right\},
\end{equation}
so that the sequence $\left(\bar{j}_n\left(t\right)\right)_{n \geq 0}$ increases. Then we set
$$
I_\mu\left(t\right)=\sum_{n \geq 0} 2^n r^{-\bar{j}_n\left(t\right)} .
$$

In fact, we have to argue that the following inequalities hold:
\begin{equation}\label{stp}
\textsf{E} \sup _{t \in T} X_t \mathop{\lesssim_r}\limits^{\left(3\right)} Size\left(T\right) \mathop{\lesssim_r}\limits^{\left(2\right)} \sup _{\mu} \int_T I_\mu\left(t\right) \mathrm{d} \mu\left(t\right) \mathop{\lesssim_r}\limits^{\left(1\right)}  \textsf{E} \sup _{t \in T} X_t.
\end{equation}
The proof of inequality (1) is almost identical to the proof in Section 10.15 of \cite{talbook}. Inequalities (2) and (3) are the contributions of our paper. By the above inequalities, we have the following result.

\begin{theorem} \label{them 1.2}
Under conditions (\ref{1}) and (\ref{2}), we have
$$
    \sup _{\mu} \int_T I_\mu\left(t\right) \mathrm{d} \mu\left(t\right) \sim_r   \textsf{E}  \sup _{t \in T} X_t. 
$$
\end{theorem}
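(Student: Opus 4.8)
The plan is to establish the two-sided bound by proving the chain of inequalities displayed in \eqref{stp}, namely
\[
\textsf{E}\sup_{t\in T}X_t \lesssim_r Size(T) \lesssim_r \sup_\mu \int_T I_\mu(t)\,\mathrm{d}\mu(t) \lesssim_r \textsf{E}\sup_{t\in T}X_t,
\]
since the desired equivalence $\sup_\mu \int_T I_\mu(t)\,\mathrm{d}\mu(t) \sim_r \textsf{E}\sup_{t\in T}X_t$ follows by combining the middle bound with the two outer bounds. Inequality (1) is the classical direction: given a probability measure $\mu$, one runs the Fernique-functional argument (as in Section 10.15 of \cite{talbook}) to show that $\int_T I_\mu(t)\,\mathrm{d}\mu(t)$ controls $\textsf{E}\sup_{t\in T}X_t$ up to an $r$-dependent constant, using the concentration estimate \eqref{| |< B} for the increments $\|X_s-X_t\|_u$. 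We take this essentially verbatim.

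For inequality (3), $\textsf{E}\sup_{t\in T}X_t \lesssim_r Size(T)$, I would produce, from any admissible sequence $(\mathcal{A}_n)$ witnessing the upper bound in Theorem \ref{them 1.1}, a concrete iterative organized tree whose size $\rho(\Gamma_{(\mathbf{n},\mathbf{j})}(T))$ dominates $\sup_{t\in T}\sum_{n\ge 0}2^n r^{-j_n(A_n(t))}$ up to constants. The key mechanism is the growth machinery of Section 3: choosing $\psi_j = \bar{\varphi}_j$ (which satisfies the hypotheses of Lemma \ref{lemma 1} and the geometric inclusions assumed there, using conditions \eqref{1} and \eqref{2}), one checks that the functional $F_{n,j}(A)$ measuring tree size along $A$ satisfies the growth condition of Definition \ref{growth conditon} for the constant $c_2 r$. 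Then Theorem \ref{lower bound} gives $\gamma_\psi(T) \lesssim_r (F_{0,j^\psi_0}(T) + r^{-j^\psi_0(T)})$, which is exactly the statement that the admissible-sequence functional is controlled by the tree size. The combinatorial point is that the separation condition (6) in the definition of a parameterized separation tree, together with Lemma \ref{lemma 2}, guarantees that the sets built at each node are genuinely separated, so the lower-bound contribution $\log(|c(p(A))|)r^{-\mathbf{j}(A)}$ along any branch adds up along the lines of the generic-chaining lower bound.

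For inequality (2), $Size(T) \lesssim_r \sup_\mu \int_T I_\mu(t)\,\mathrm{d}\mu(t)$, I would fix an iterative organized tree nearly attaining $Size(T)$ and construct a probability measure $\mu$ on $S_{\mathcal{T}}$ by distributing mass uniformly among children at each node: if $A$ has children $c(A)$ with $|c(A)| = \min\{|A|,N_{\mathbf{n}}\}$, split the mass of $A$ equally. A branch from the root through sets $A_0 \supset A_1 \supset \cdots$ then receives total mass $\prod_i |c(p(A_i))|^{-1}$, and the definition \eqref{new11} of $\bar{j}_n(t)$ translates, via the logarithmic cardinality accounting $\log N_n = 2^n\log 2$, into the estimate $\bar{j}_n(t) \ge \mathbf{j}(A)$ for the appropriate node $A$ on the branch of $t$; hence $I_\mu(t) \gtrsim_r \sum_{t\in A}\log(|c(p(A))|)r^{-\mathbf{j}(A)}$, and integrating recovers $\rho$. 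I expect the main obstacle to be inequality (3): one must verify that the $F_{n,j}$ arising from the tree size really does satisfy the growth condition with the right constant — this requires carefully matching the index shifts ($n \mapsto n+\kappa$, $j \mapsto j+2$) in Definition \ref{growth conditon} against the way $\mathbf{n}$ and $\mathbf{j}$ increase along the tree, and handling the family-of-distances issue (the balls $B^\psi_j$ vary with $j$) rather than a single metric, which is precisely the new difficulty over the Gaussian case flagged in the introduction. The interpolation Lemma \ref{interpolation} is what tames the error term $\sup_x \sum_n 2^n r^{-s_n(x)}$ in the contraction principle, so its role in closing the argument must be tracked explicitly.
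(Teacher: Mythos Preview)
Your overall plan via the chain \eqref{stp} matches the paper, and your descriptions of inequalities (2) and (3) are essentially correct: for (2) the paper also builds a measure supported on the leaves of the tree and compares $\bar{j}_n(t)$ to $\mathbf{j}(A)$ using Proposition~\ref{proposition 1}; for (3) the paper verifies that the tree-size functionals $F_{n,j}$ of \eqref{nf} satisfy the growth condition (Proposition~\ref{prop 3}) and then applies Theorem~\ref{lower bound}.

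However, you have misidentified what inequality (1) is and how it is proved. Inequality (1) asserts $\int_T I_\mu(t)\,\mathrm{d}\mu(t)\lesssim_r \textsf{E}\sup_{t\in T}X_t$ for \emph{every} probability measure $\mu$; that is, $\textsf{E}\sup X_t$ controls the Fernique integral, not the other way around. This is the hard direction: its proof (Theorem~\ref{them 2.1}) does not use the concentration estimate \eqref{| |< B} at all, but rather invokes the full strength of Theorem~\ref{them 1.1} to obtain an admissible sequence with $\sup_t\sum_n 2^n r^{-j_n(A_n(t))}\lesssim_r \textsf{E}\sup X_t$, and then compares $\bar{j}_n(t)$ to $j_n(A_n(t))$ set by set depending on whether $\mu(A)\ge N_{n+1}^{-1}$.

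Conversely, you have omitted from Step (3) the place where \eqref{| |< B} is actually needed. Theorem~\ref{lower bound} only gives $\gamma_\psi(T)\lesssim_r Size(T)+r^{-\tilde{j}_0}$; to conclude $\textsf{E}\sup_{t\in T}X_t\lesssim_r\gamma_\psi(T)$ one must invoke the chaining upper bound of Theorem~\ref{them 2.4} together with the moment estimate \eqref{| |< B}, as carried out in Proposition~\ref{prop 2}. In short, you have swapped the roles of the two analytic inputs: Theorem~\ref{them 1.1} drives Step (1), and \eqref{| |< B} with Theorem~\ref{them 2.4} drives Step (3).
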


\subsection{The proof of inequality (1)}
Inequality (1) follows by the following theorem.

\begin{theorem}\label{them 2.1}
Under conditions (\ref{1}) and (\ref{2}), given a probability measure $\mu$ on $T$, we have
$$
\int_T I_\mu\left(t\right) \mathrm{d} \mu\left(t\right) \lesssim_r \textsf{E} \sup _{t \in T} X_t.
$$
\end{theorem}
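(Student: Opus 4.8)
The plan is to adapt, essentially verbatim, the argument of Section~10.15 of \cite{talbook} to the present family of distances. The claimed inequality is the minoration half of the majorizing-measure equivalence: for \emph{every} probability measure $\mu$ on $T$ the Fernique functional $\int_T I_\mu\,d\mu$ is, up to constants depending only on $r$ and $C_0$, a \emph{lower} bound for $\textsf{E}\sup_{t\in T}X_t$. First I would reduce to $T$ finite and $\mu$ finitely supported: since $\textsf{E}\sup_{t\in T}X_t=\sup\{\textsf{E}\sup_{t\in F}X_t:F\subset T\ \text{finite}\}$, and since by \eqref{new11} the indices $\bar{j}_n(\cdot)$ change in a controlled way when $\mu$ is replaced by its normalized restriction to a finite set carrying most of its mass, it suffices to obtain the bound for finite $T$ with a constant depending only on $r$ and $C_0$. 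Then I would dispose of the $n=0$ term: $\int_T r^{-\bar{j}_0}\,d\mu=r^{-\bar{j}_0}$, and by maximality of $\bar{j}_0$ there exist $s,t\in T$ with $\varphi_{\bar{j}_0+1}(s,t)>1$, so that $\|X_s-X_t\|$ is of order at least $r^{-\bar{j}_0}$ (a two-point minoration, which follows from \eqref{1}--\eqref{2}); hence $r^{-\bar{j}_0}\lesssim_r\textsf{E}|X_s-X_t|\le 2\,\textsf{E}\sup_{t\in T}X_t$. It then remains to control $\int_T\sum_{n\ge 1}2^{n}r^{-\bar{j}_n(t)}\,d\mu(t)$.

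The engine of the proof is a Sudakov-type minoration for the canonical process itself, established in Chapter~8 of \cite{talbook} (it underlies Theorem~\ref{them 1.1}) and resting on the concentration estimate \eqref{| |< B} together with the regularity assumptions \eqref{1} and \eqref{2}. In the form I would use it: there is $L=L(C_0)$ such that if $n\ge 1$, $m\le N_n$, and $t_1,\dots,t_m\in T$ satisfy $\bar{\varphi}_j(t_k,t_\ell)>2^{n}$ for $k\neq\ell$ --- so that, by Lemma~\ref{lemma 2}, the inflated balls $B_j(t_k,2^{n})+B_j(2^{n})$ are pairwise disjoint --- then for any sets $H_k\subset B_j(t_k,2^{n})$,
\[
\textsf{E}\sup_{t\in\bigcup_{k\le m}H_k}X_t\;\ge\;\frac{1}{L}\,2^{n}r^{-j}\;+\;\min_{k\le m}\ \textsf{E}\sup_{s\in H_k-t_k}X_s .
\]
Linearity of the canonical process, $X_t-X_{t_k}=X_{t-t_k}$, is exactly what turns the minimum into a genuine ``child'' functional of the same type; the same linearity, together with $\textsf{E}X_{t_k}=0$, gives the elementary facts that $\textsf{E}\sup_{s\in H-t_k}X_s=\textsf{E}\sup_{t\in H}X_t$ and that $\textsf{E}\sup_{s\in H-t_k}X_s\le\textsf{E}\sup_{t\in A}X_t$ whenever $H\subset A$.

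With this in hand I would run a recursive partitioning of $T$ steered by $\mu$, mimicking the partition schemes of \cite{talbook} but choosing split points by measure rather than by diameter. At a cell $A$ of positive $\mu$-measure sitting at level $n$, one extracts (by a packing argument of the type of Lemma~\ref{gc lemma 1}) a maximal $\bar{\varphi}_j$-separated family $t_1,\dots,t_m$, $m\le N_n$, at the largest index $j$ for which it coexists with a covering of $A$ by $N_n$ balls of radius $2^n$, and puts $B_k=A\cap B_j(t_k,2^{n})$; by Lemma~\ref{lemma 2} the $B_k$ are pairwise disjoint, and by the defining property \eqref{new11} of $\bar{j}_n$ each $B_k$ carrying an ``active'' $t_k$ has $\mu(B_k)\ge N_n^{-1}$ while $r^{-\bar{j}_n(\cdot)}\lesssim r^{-j}$ on it. Thus the contribution of $A$ to $\int_T\sum_{n\ge1}2^{n}r^{-\bar{j}_n(t)}\,d\mu(t)$ is at most $L\,2^{n}r^{-j}\,\mu(A)$ plus $\sum_k\mu(B_k)$ times the contribution of $B_k-t_k$ at level $n+1$. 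Feeding in the induction hypothesis on the children, bounding each child's $X$-functional by $\textsf{E}\sup_A X$, and absorbing $2^{n}r^{-j}$ by the minoration above --- which says precisely $2^{n}r^{-j}\lesssim_r\textsf{E}\sup_A X-\min_k\textsf{E}\sup_{B_k-t_k}X$ --- closes the induction, all the relevant geometric series remaining summable because $r=2^{\kappa-3}\ge 2^{5}$. Adding back the $n=0$ term yields $\int_T I_\mu\,d\mu\lesssim_r\textsf{E}\sup_{t\in T}X_t$.

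I expect the genuine obstacle to be the Sudakov-type minoration of the second paragraph, taken relative to the \emph{whole family} $(\bar{\varphi}_j)_j$: unlike in the Gaussian case one cannot localize to a single metric, and the inequality must be squeezed out of the concentration estimates of Section~2 --- chiefly $\|X_t\|_u\le Cu$ for $t\in B(u)$ --- together with the $\Delta_2$-condition \eqref{1} and the regularity \eqref{2}, which is where exactly those two hypotheses enter. The remaining points are bookkeeping: bridging the index at which $A$ can be covered by $N_n$ balls and the index at which $N_n$ separated points exist (handled by the partition scheme of \cite{talbook}); keeping the accumulated tail and boundary terms summable so the recursion closes with a depth-independent constant; matching the measure-conditioned $\bar{j}_n(t)$ on a cell with that cell's own metric data; and making the passage from finite $T$ back to arbitrary $T\subset\ell^2$ clean. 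None of these is serious, but each has to be written out.
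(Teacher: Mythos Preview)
Your approach is different from the paper's and considerably more laborious. The paper does not touch the Sudakov minoration here at all. Instead it invokes Theorem~\ref{them 1.1} as a black box: that theorem already supplies an admissible sequence $(\mathcal A_n)$ with indices $j_n(A)$ satisfying $\sup_t\sum_{n\ge0} 2^n r^{-j_n(A_n(t))}\lesssim_r S:=\textsf{E}\sup_T X_t$ and $r^{-\bar j_0}\lesssim_r S$. The whole proof is then a half-page comparison: for $A\in\mathcal A_n$ with $\mu(A)\ge N_{n+1}^{-1}$ and $t\in A$, the inclusion $A\subset\{s:\varphi_{j_n(A)}(s,t)\le 2^{n+1}\}$ witnesses $\bar j_{n+1}(t)\ge j_n(A)$, so $\int_A 2^{n+1}r^{-\bar j_{n+1}(t)}\,d\mu\le 2\int_A 2^n r^{-j_n(A_n(t))}\,d\mu$; cells with $\mu(A)<N_{n+1}^{-1}$ contribute at most $2^{n+1}r^{-\bar j_0}N_{n+1}^{-1}$; summing over $A\in\mathcal A_n$ and over $n$ finishes. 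The hard step you flag as the ``genuine obstacle'' has already been absorbed into Theorem~\ref{them 1.1} and need not be revisited; your route essentially reproves that theorem on the way.

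Your recursion also has a gap as written. You assert that, by \eqref{new11}, each child cell $B_k=A\cap B_j(t_k,2^n)$ carrying an ``active'' point has $\mu(B_k)\ge N_n^{-1}$ and $r^{-\bar j_n(\cdot)}\lesssim r^{-j}$ on it. Neither follows: $\bar j_n(t)$ is defined through the $\mu$-mass of balls centered at $t$, not at your packing centers $t_k$, and you have not explained why your choice of $j$ (a packing/covering index for the cell $A$) should dominate the measure-thresholded index $\bar j_n(t)$ for the points of $B_k$. In the paper's argument this linkage is immediate, precisely because one starts from a partition already controlled by $S$ rather than building one from $\mu$.
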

\begin{proof}
Let $\displaystyle S=\textsf{E}\sup _{t\in T}X_t$. According to Theorem \ref{them 1.1}, we know that there exists a number $j_0$ such that
$$
c \textsf{E} \sup _{t \in T} X_t < r^{-j_0}\leq c r\textsf{E} \sup _{t \in T} X_t ,
$$
$$
\forall s, t \in T, \varphi_{j_0}\left(s, t\right) \leq 1.
$$
Thus,  the definition of $\bar{j}_0$ and $\bar{j}_0\left(t\right)$ in (\ref{new11}) yields $$r^{-\bar{j}_0\left(t\right)} \leq CrS$$ for $\forall t \in T$.

Consider the admissible sequence $\left(\mathcal{A}_n\right)$ of $T$ and $j_n\left(A\right)$ in Theorem \ref{them 1.1},  for $t \in A$, 
$$
A \subset\left\{s \in T: \varphi_{j_n\left(A\right)}\left(s, t\right) \leq 2^n\right\} \subset\left\{s \in T: \varphi_{j_n\left(A\right)}\left(s, t\right) \leq 2^{n+1}\right\}.
$$

Thus, by the definition of $\bar{j}_{n+1}\left(t\right)$ in (\ref{new11}),  if $\mu\left(A\right) \geq N_{n+1}^{-1}$ then $\bar{j}_{n+1}\left(t\right) \geq j_n\left(A\right)$. And we have
$$
\int_A 2^{n+1} r^{-\bar{j}_{n+1}\left(t\right)} \mathrm{d} \mu\left(t\right) \leq 2 \int_A 2^n r^{-j_n\left(A_n\left(t\right)\right)} \mathrm{d} \mu\left(t\right) .
$$

Moreover, by $\bar{j}_{n+1}\left(t\right) \geq \bar{j}_0\left(t\right)=\bar{j}_0$, if $\mu\left(A\right)<N_{n+1}^{-1}$, then we have
$$
\int_A 2^{n+1} r^{-\bar{j}_{n+1}\left(t\right)} \mathrm{d} \mu\left(t\right) \leq 2^{n+1} r^{-\bar{j}_0} N_{n+1}^{-1} .
$$

Summation over all $A \in \mathcal{A}_n$ implies (using in the second term that card $\mathcal{A}_n \leq N_n$ and $N_n N_{n+1}^{-1} \leq N_n^{-1}$)
$$
\int_T 2^{n+1} r^{-\bar{j}_{n+1}\left(t\right)} \mathrm{d} \mu\left(t\right) \leq 2 \int_T 2^n r^{-j_n\left(A_n\left(t\right)\right)} \mathrm{d} \mu\left(t\right)+2^{n+1} r^{-\bar{j}_0} N_n^{-1} .
$$

The summation of $n \geq 0$ implies that
$$
\int_T \sum_{n \geq 1} 2^n r^{-\bar{j}_n\left(t\right)} \mathrm{d} \mu\left(t\right) \leq C S+C r^{-\bar{j}_0} .
$$

Since $2^n r^{-\bar{j}_n\left(t\right)} \leq C r^{-\bar{j}_0} \leq C r S$ for $n=0$ or $n=1$, we have 
  $$\int_T I_\mu\left(t\right) \mathrm{d} \mu\left(t\right) \leq C r \textsf{E} \sup _{t \in T} X_t.$$
\end{proof}

By Theorem \ref{them 2.1}, we obtain $$\sup _{\mu} \int_T I_\mu\left(t\right) \mathrm{d} \mu\left(t\right) \leq Cr\textsf{E} \sup _{t \in T} X_t.$$

\subsection{The proof of inequality (2)}

The key points of the proof are given in the following proposition.

\begin{proposition}\label{proposition 1}
For $\forall n \geq 0\text{ and }\forall A\neq A^\prime \in \mathcal{B}_{n}$, we have $$\left(A+B _ {\mathbf{j} \left(A\right) } \left(  2 ^ { \mathbf{n}  \left(A\right)  } \right)\right)\cap \left(A^{\prime}+B _ { \mathbf{j}  \left(A^{\prime}\right) } \left(  2 ^ { \mathbf{n}  \left(A^{\prime}\right) } \right)\right)=\varnothing.$$

\end{proposition}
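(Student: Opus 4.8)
The plan is to argue by induction on the generation index $n$, at each level reducing the statement either to the separation of siblings furnished by Lemma~\ref{lemma 2} or to the statement one generation lower. Two structural features of an iterative organized tree drive the argument. First, two distinct children $A,A'$ of a common node $C$ are either distinct singletons, or of the form $A=A_t\subset B_{\mathbf{j}(t)}(t,2^{\mathbf{n}(A)})$ and $A'=A_{t'}\subset B_{\mathbf{j}(t')}(t',2^{\mathbf{n}(A')})$ with $t\neq t'$ in $C$, $\mathbf{n}(A)=\mathbf{n}(A')$, and $\bar{\varphi}_{\mathbf{j}(t)}(t,t')>2^{\mathbf{n}(A)}$, $\bar{\varphi}_{\mathbf{j}(t')}(t,t')>2^{\mathbf{n}(A)}$. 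Second, the enlargement ball of a node is nested in that of its parent: $B_{\mathbf{j}(A)}(2^{\mathbf{n}(A)})\subset B_{\mathbf{j}(p(A))}(2^{\mathbf{n}(p(A))})$. The first feature is immediate from how the children $A_t$ and the indices $\mathbf{j}(t)$ are chosen; the second is precisely the inclusion $B_{j(t)}(2^{\mathbf{n}(A)})\subset B_{\mathbf{j}(p(A))}(2^{\mathbf{n}(p(A))})$ demanded of the indexing points at every branching of the construction, and it is trivial when $A$ is a singleton, since then $\mathbf{j}(A)=\infty$, $B_{\mathbf{j}(A)}(2^{\mathbf{n}(A)})=\{0\}$, and $A+B_{\mathbf{j}(A)}(2^{\mathbf{n}(A)})=A$.

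For the base case $n=0$, every $A\in\mathcal{B}_0$ is a child of the largest element $\textsf{A}$. If the children of $\textsf{A}$ are pairwise distinct singletons the claim is immediate; otherwise $A=A_t$ and $A'=A_{t'}$ for distinct $t,t'\in T_{n_0^{(0)}}(\textsf{A})$, and Lemma~\ref{lemma 2}, applicable by the first feature above, gives
$$\bigl(B_{\mathbf{j}(t)}(t,2^{n_0^{(0)}})+B_{\mathbf{j}(t)}(2^{n_0^{(0)}})\bigr)\cap\bigl(B_{\mathbf{j}(t')}(t',2^{n_0^{(0)}})+B_{\mathbf{j}(t')}(2^{n_0^{(0)}})\bigr)=\emptyset;$$
since $A+B_{\mathbf{j}(A)}(2^{\mathbf{n}(A)})$ and $A'+B_{\mathbf{j}(A')}(2^{\mathbf{n}(A')})$ are contained in the left- and right-hand Minkowski sums respectively, disjointness follows. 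The inductive step in the subcase $p(A)=p(A')\in\mathcal{B}_{n-1}$ is verbatim this argument, with the common parent playing the role of $\textsf{A}$ and the common value $\mathbf{n}(A)=\mathbf{n}(A')$ that of $n_0^{(0)}$; no appeal to the inductive hypothesis is needed here.

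The remaining subcase is $A\neq A'\in\mathcal{B}_n$ with $p(A)\neq p(A')$. Then $p(A)$ and $p(A')$ are distinct elements of $\mathcal{B}_{n-1}$, so by the inductive hypothesis $\bigl(p(A)+B_{\mathbf{j}(p(A))}(2^{\mathbf{n}(p(A))})\bigr)\cap\bigl(p(A')+B_{\mathbf{j}(p(A'))}(2^{\mathbf{n}(p(A'))})\bigr)=\emptyset$. Combining $A\subset p(A)$ with the nesting of enlargement balls yields $A+B_{\mathbf{j}(A)}(2^{\mathbf{n}(A)})\subset p(A)+B_{\mathbf{j}(p(A))}(2^{\mathbf{n}(p(A))})$, and likewise for $A'$; hence the two enlarged sets are disjoint, which closes the induction. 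I expect the only point needing genuine attention to be the ball-nesting inclusion $B_{\mathbf{j}(A)}(2^{\mathbf{n}(A)})\subset B_{\mathbf{j}(p(A))}(2^{\mathbf{n}(p(A))})$: one has to confirm that it is enforced at every branching, and that it survives the degenerate cases in which $p(A)$ is the largest element or $A$ is a singleton — in both of these it holds automatically, because $0$ lies in every ball $B_j(2^m)$. Everything else is bookkeeping about which points index $A$ and $A'$ together with a direct application of Lemma~\ref{lemma 2}.
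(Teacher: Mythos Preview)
Your proposal is correct and follows essentially the same approach as the paper: induction on the generation index, handling siblings via Lemma~\ref{lemma 2} and non-siblings via the inductive hypothesis combined with the nesting $B_{\mathbf{j}(A)}(2^{\mathbf{n}(A)})\subset B_{\mathbf{j}(p(A))}(2^{\mathbf{n}(p(A))})$. If anything, you are slightly more careful than the paper in explicitly treating the degenerate singleton and root-parent cases.
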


\begin{proof}
    We may assume $|\mathcal{B}_0|>0$.  Then we know that the proposition holds by Lemma \ref{lemma 2} when $n=0$.

 Assuming that this proposition holds for $n$, it is easy to know that for $\forall A\neq A^\prime \in \mathcal{B}_{n+1}$. If $p\left(A\right)=p\left(A^\prime\right)$, we have 
    $$\left(A+B _ { \mathbf{j}  \left(A\right) } \left(  2 ^ { \mathbf{n}  \left(A\right)  } \right)\right)\cap \left(A^{\prime}+B _ { \mathbf{j}  \left(A^{\prime}\right) } \left(  2 ^ { \mathbf{n} \left(A^{\prime}\right) } \right)\right)=\varnothing$$ 
    by Lemma \ref{lemma 2}. If $p\left(A\right)\neq p\left(A^\prime\right)$, we have
     $$\left(p\left(A\right)+B _ { \mathbf{j}  \left(p\left(A\right)\right) } \left(  2 ^ { \mathbf{n}  \left(p\left(A\right)\right)  } \right)\right)\cap \left(p\left(A^{\prime}\right)+B _ { \mathbf{j}  \left(p\left(A^{\prime}\right)\right) } \left(  2 ^ { \mathbf{n}  \left(p\left(A^{\prime}\right)\right)  } \right)\right)=\varnothing$$ by assumption.
     By means of $B_{\mathbf{j}(A)}\left(2^{\mathbf{n} \left(A\right)}\right)\subset B _ { \mathbf{j}(p\left(A\right))}\left(2 ^ { \mathbf{n} \left(p\left(A\right)\right)}\right)$, we have $$\left(A+B_{\mathbf{j} \left(A\right)}\left(2^{\mathbf{n} \left(A\right)}\right)\right)\subset \left(p\left(A\right)+B_{\mathbf{j} \left(p\left(A\right)\right)}\left(2^{\mathbf{n} \left(p\left(A\right)\right)}\right)\right).$$
     Thus $$\left(A+B _ { \mathbf{j} \left(A\right) } \left(  2 ^ { \mathbf{n}  \left(A\right)  } \right)\right)\cap \left(A^{\prime}+B _ { \mathbf{j}  \left(A^{\prime}\right) } \left(  2 ^ { \mathbf{n}  \left(A^{\prime}\right) } \right)\right)=\varnothing.$$
     Therefore, the proposition holds for $n+1$. By induction, we complete the proof. 
\end{proof}

\begin{theorem}\label{them 2.2}
    Given an arbitrary  iterative organized tree $\Gamma _{\left(\mathbf{n}, \mathbf{j}\right)}\left(T\right)$, we have $$\rho\left(\Gamma _{\left(\mathbf{n}, \mathbf{j}\right)}\left(T\right)\right)\lesssim_r\sup _{\mu} \int_T I_\mu\left(t\right) \mathrm{d} \mu\left(t\right).$$\end{theorem}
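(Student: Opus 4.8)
The plan is to produce from the given iterative organized tree $\Gamma_{(\mathbf n,\mathbf j)}(T)$ one explicit probability measure $\mu$ on $T$ and to prove $\int_T I_\mu(t)\,\mathrm d\mu(t)\gtrsim_r\rho\bigl(\Gamma_{(\mathbf n,\mathbf j)}(T)\bigr)$; since $\sup_\mu\int_T I_\mu\,\mathrm d\mu$ dominates this one integral, the theorem follows. I would take $\mu$ to be the measure carried by the tree in the usual way: give unit mass to the root $\textsf A=\mathcal B_{-1}$ and, recursively, split the mass of every node $A$ equally among its children $c(A)$, which are pairwise disjoint by Proposition \ref{proposition 1} (as $0\in B_j(2^n)$). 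For a finite tree this is a finite convex combination of point masses carried by the terminal singletons; for an infinite tree one passes to the associated projective/weak limit. With the convention that a branch reaching a singleton $\{a\}$ is kept there, $\mathcal B_m$ partitions $S_{\mathcal T}$ for every $m\ge0$, and the only facts about $\mu$ I shall use are that $\mu$ is a probability measure carried by $S_{\mathcal T}$, that $\mu=\sum_{C\in\mathcal B_m}\mu|_{C}$ for every $m$, and that $\mu(A)\le|c(p(A))|^{-1}$ for every $A\in\bigcup_{i\ge0}\mathcal B_i$ (the last being merely the final factor in the product defining $\mu(A)$).

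Fix $t\in S_{\mathcal T}$ (equivalently, $\mu$-a.e.\ $t$) and let $\textsf A=A^{(-1)}\supsetneq A^{(0)}\supsetneq A^{(1)}\supsetneq\cdots$ be the chain of tree-nodes containing $t$, so $A^{(m)}$ is the unique element of $\mathcal B_m$ with $t\in A^{(m)}$ (uniqueness being contained in Proposition \ref{proposition 1}) and $\mathbf n(A^{(0)})<\mathbf n(A^{(1)})<\cdots$ by the parameterized-separation-tree axioms. By axiom (5), whenever $\mathbf j(A^{(m)})<\infty$ one has $|c(p(A^{(m)}))|=N_{\mathbf n(A^{(m)})}=2^{2^{\mathbf n(A^{(m)})}}$, so $\log(|c(p(A^{(m)}))|)=2^{\mathbf n(A^{(m)})}\log 2$, while if $\mathbf j(A^{(m)})=\infty$ then $\log(|c(p(A^{(m)}))|)\,r^{-\mathbf j(A^{(m)})}=0$; consequently
$$
\sum_{t\in A,\;A\in\bigcup_{i\ge0}\mathcal B_i}\log\bigl(|c(p(A))|\bigr)\,r^{-\mathbf j(A)}=(\log 2)\sum_{m\ge0}2^{\mathbf n(A^{(m)})}r^{-\mathbf j(A^{(m)})}.
$$
It therefore suffices to prove the pointwise bound $I_\mu(t)\ge\frac r2\sum_{m\ge0}2^{\mathbf n(A^{(m)})}r^{-\mathbf j(A^{(m)})}$, since integrating it against $\mu$ and using $\sum_{t\in A}(\cdots)\ge\rho(\Gamma_{(\mathbf n,\mathbf j)}(T))$ then gives the theorem (indeed with an absolute constant).

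For the pointwise bound, fix $m$ with $\mathbf j(A^{(m)})<\infty$ and take any $j\ge\mathbf j(A^{(m)})$ and any $n\le\mathbf n(A^{(m)})-1$. Because $t\in A^{(m)}$ and the bodies $B(\cdot)$ are convex and contain $0$,
$$
\{s:\varphi_j(t,s)\le2^n\}=t+r^{-j}B(2^n)\ \subset\ A^{(m)}+r^{-\mathbf j(A^{(m)})}B\bigl(2^{\mathbf n(A^{(m)})}\bigr)=A^{(m)}+B_{\mathbf j(A^{(m)})}\bigl(2^{\mathbf n(A^{(m)})}\bigr),
$$
using $r^{-j}\le r^{-\mathbf j(A^{(m)})}$ and $B(2^n)\subset B(2^{\mathbf n(A^{(m)})})$. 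By Proposition \ref{proposition 1} the right-hand set is disjoint from $C$ for every $C\in\mathcal B_m\setminus\{A^{(m)}\}$, so, as $\mu=\sum_{C\in\mathcal B_m}\mu|_C$, its $\mu$-measure is at most $\mu(A^{(m)})\le|c(p(A^{(m)}))|^{-1}=N_{\mathbf n(A^{(m)})}^{-1}<N_{\mathbf n(A^{(m)})-1}^{-1}\le N_n^{-1}$. Hence $\mu(\{s:\varphi_j(t,s)\le2^n\})<N_n^{-1}$ for every $j\ge\mathbf j(A^{(m)})$, so $\bar j_n(t)\le\mathbf j(A^{(m)})-1$ by \eqref{new11}, for every $n\le\mathbf n(A^{(m)})-1$. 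For each $n\ge0$ let $m(n)$ be the least $m$ with $\mathbf n(A^{(m)})>n$; then $\bar j_n(t)\le\mathbf j(A^{(m(n))})-1$ whenever $\mathbf j(A^{(m(n))})<\infty$, and grouping the defining sum of $I_\mu(t)$ according to $m(n)$ (the block for a given $m$ being $\mathbf n(A^{(m-1)})\le n\le\mathbf n(A^{(m)})-1$, with $\mathbf n(A^{(-1)})=-1$),
$$
I_\mu(t)=\sum_{n\ge0}2^nr^{-\bar j_n(t)}\ \ge\ \sum_{m\,:\,\mathbf j(A^{(m)})<\infty}r^{1-\mathbf j(A^{(m)})}\!\!\!\sum_{\mathbf n(A^{(m-1)})\le n\le\mathbf n(A^{(m)})-1}\!\!\!\!2^n\ \ge\ \sum_{m\ge0}2^{\mathbf n(A^{(m)})-1}r^{1-\mathbf j(A^{(m)})}=\frac r2\sum_{m\ge0}2^{\mathbf n(A^{(m)})}r^{-\mathbf j(A^{(m)})},
$$
each inner block being nonempty ($\mathbf n$ strictly increasing, $\mathbf n(A^{(m)})\ge1$) and the terms with $\mathbf j(A^{(m)})=\infty$ vanishing. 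Combined with the displayed identity and integration against $\mu$, this gives $\rho(\Gamma_{(\mathbf n,\mathbf j)}(T))\le\frac{2\log2}{r}\sup_\mu\int_T I_\mu\,\mathrm d\mu\lesssim_r\sup_\mu\int_T I_\mu\,\mathrm d\mu$.

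The step I expect to demand the most care is turning the formal branching weights into a genuine probability measure $\mu$ on $T$ carried by $S_{\mathcal T}$ when the iterative tree is infinite — the projective/weak-limit construction, together with the bookkeeping (absorbing singleton leaves so that $\mathcal B_m$ partitions $S_{\mathcal T}$ and Proposition \ref{proposition 1} applies at every generation). The rest is routine: the doubly-exponential arithmetic with $N_n=2^{2^n}$, the elementary dilation inclusion for the convex bodies $B(\cdot)$, and the geometric summation. I would also remark that the degenerate terms ($\mathbf j=\infty$, singleton leaves) and the low-index contributions only strengthen the estimate, and that the whole force of the argument is Proposition \ref{proposition 1}: it confines each ball $\{s:\varphi_j(t,s)\le2^n\}$ to the subtree below the single node $A^{(m)}$, so that only the crude bound $\mu(A^{(m)})\le|c(p(A^{(m)}))|^{-1}$ is needed.
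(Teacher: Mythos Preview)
Your proof is correct and follows essentially the same route as the paper's: build the natural tree measure $\mu$, use Proposition~\ref{proposition 1} to confine each ball $\{s:\varphi_j(t,s)\le 2^n\}$ to a single node at the relevant generation, bound its $\mu$-mass by $N_{\mathbf n}^{-1}$, read off an upper bound on $\bar j_n(t)$, and integrate the resulting pointwise inequality. Your write-up is in fact more careful than the paper's on two points the paper leaves implicit---the explicit construction of $\mu$, and the use of $n\le\mathbf n(A^{(m)})-1$ to get the strict inequality $\mu(\cdot)<N_n^{-1}$ needed in \eqref{new11}---and your block summation over all $n$ yields a slightly sharper constant, but the argument is the same.
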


\begin{proof}
   
   $\Tilde{j}_0\geq \bar{j}_0 $ is obvious from the definitions. Therefore, what we need to prove is the case that $\mathbf{n}(A)>0$ when $A\in \mathcal{B}_n$.
   
   For $t\in S_{\Gamma _{\left(\mathbf{n}, \mathbf{j}\right)}\left(T\right)}$, let $A
_n\left(t\right)$ be the set in $\mathcal{B}_n$ that contains the point $t$. If $|c\left(p\left(A_{n}\left(t\right)\right)\right)|<N _{\mathbf{n}\left(A_n\right)}$, then $$\log \left(|c\left(p\left(A_n\left(t\right)\right)\right)|\right)r^{-\mathbf{j}\left(A_{n}\left(t\right)\right)}=0.$$
Otherwise, we have $\mu \left(A_{n}\left(t\right)\right)\leq N_{\mathbf{n}\left(A_{n}\left(t\right)\right)}^{-1}$.

   Consider the case where $|c\left(p\left(A_n\left(t\right)\right)\right)|\geq N _{\mathbf{n}\left(A_n\right)}$. By Proposition \ref{proposition 1}, for $\forall n \geq 0\text{ and }\forall A\neq A^\prime \in \mathcal{B}_{n}$, we have $$\left(A+B _ { \mathbf{j} \left(A\right) } \left(  2 ^ { \mathbf{n} \left(A\right)  } \right)\right)\cap \left(A^{\prime}+B _ { \mathbf{j} \left(A^{\prime}\right) } \left(  2 ^ { \mathbf{n} \left(A^{\prime}\right) } \right)\right)=\varnothing.$$
    Thus we have $$\left(t+B _ { \mathbf{j} \left(A_{n}\left(t\right)\right)}\right) \cap A^\prime = \varnothing\text{ for } \forall A^\prime \neq A_n\left(t\right) \in \mathcal{B}_n, $$
    which implies that $$\mu \left(t+B _ { \mathbf{j} \left(A_{n}\left(t\right)\right) } \left(  2 ^ { \mathbf{n}\left(A_{n}\left(t\right)\right) } \right)\right)\leq \mu\left(A_{n}\left(t\right)\right)\leq N_{\mathbf{n}\left(A_{n}\left(t\right)\right)}^{-1}.$$ 
    By definition of $\bar{j}_n\left(t\right)$, we have
    $$2^{\mathbf{n}\left(A_{n}\left(t\right)\right)}r^{-\mathbf{j}\left(A_n\left(t\right)\right)}\leq 2^{\mathbf{n}\left(A_{n}\left(t\right)\right)}r^{-\Bar{j}_{\mathbf{n}\left(A_{n}\left(t\right)\right)} \left(t\right)}.$$
     Therefore, by the construction of tree, we have $$\log \left(|c\left(p\left(A_n\left(t\right)\right)\right)|\right)r^{\mathbf{j}\left(A_n\left(t\right)\right)}\leq 2^{\mathbf{n}\left(A_{n}\left(t\right)\right)}r^{-\Bar{j}_{\mathbf{n}\left(A_{n}\left(t\right)\right)} \left(t\right)}.$$
    Combining the above cases, we obtain $$\rho\left(\Gamma _{\left(\mathbf{n}, \mathbf{j}\right)}\left(T\right)\right) \leq I_{\mu}\left(t\right) ~\text{for}~\forall t\in S_{\Gamma _{\left(\mathbf{n}, \mathbf{j}\right)}\left(T\right)}\left(T\right).$$ 
By definition of $\mu$, we have $$\int_{T}I_{\mu}\left(t\right)\mu\left(dt\right)=\int_{S_{\Gamma _{\left(\mathbf{n}, \mathbf{j}\right)}\left(T\right)}\left(T\right)}I_{\mu}\left(t\right)\mu\left(dt\right).$$
Thus we have
   $$\rho\left(\Gamma _{\left(\mathbf{n}, \mathbf{j}\right)}\left(T\right)\right) \leq  \int_{T}I_{\mu}\left(t\right)\mu\left(dt\right),$$ which implies $$Size\left(T\right) \leq \sup _{\mu} \int_T I_\mu\left(t\right).$$    
\end{proof}

\begin{remark}
The construction of the iterative organized tree is key to proving the theorem \ref{them 2.2}.
\end{remark}

\subsection{The proof of inequality (3)}

We need to introduce the following functionals to complete the proof. In the definition of an iterative organized tree,  $\mathbf{j}(\textsf{A})=\tilde{j}_0$, where $\textsf{A}$ is the largest element of the tree. We assume $J=j+1+\frac{1}{\kappa}(n_{0}^{\left(0\right)}-n)$. When $j\ge \tilde{j}_0$, we set
\begin{equation}
\label{nf}
F_{n,j}\left(T\right)=\sup \left\{\rho\left( \Gamma _{\left(\mathbf{n}, \mathbf{j},J\right)}\left(T\right)\right):n_{0}^{\left(0\right)}\geq n,  \Gamma _{\left(\mathbf{n}, \mathbf{j},J\right)}\left(T\right)\text{ is an iterative organized tree of }~T  \right\}.
\end{equation}
When $j<\tilde{j}_0$,  we set $$F_{n,j}\left(T\right)=\sup \left\{\rho\left( \Gamma _{\left(\mathbf{n}, \mathbf{j}\right)}\left(T\right)\right):n_{0}^{\left(0\right)}\geq 0, \Gamma _{\left(\mathbf{n}, \mathbf{j}\right)}\left(T\right)\text{ is an iterative organized tree of }~T  \right\}. $$
We need the following results to prove our theorem.

\begin{theorem}[See Theorem 2.7.14 in Talagrand \cite{talbook}]\label{them 2.4}
    Consider a process $\left(X_t\right)_{t \in T}$, which is assumed to be centered but does not need to be symmetric. For $n \geq 1$, consider the distance $\delta_n$ on $T$ given by $\delta_n\left(s, t\right)=\left\|X_s-X_t\right\|_{2^n}$. Denote by $\Delta_n\left(A\right)$, the diameter of a subset $A$ of $T$ for the distance $\delta_n$.
Consider an admissible sequence $\left(\mathcal{A}_n\right)_{n \geq 0}$ of partitions of $T$. Then,
$$
\textsf{E} \sup _{s, t \in T}\left|X_s-X_t\right| \lesssim\sup _{t \in T} \sum_{n \geq 0} \Delta_n\left(A_n\left(t\right)\right),
$$
moreover, given $u>0$ and the largest integer $k$ with $2^k \leq u^2$, we have
$$
\textsf{P}\left(\sup _{s, t \in T}\left|X_s-X_t\right| \geq c\Delta_k\left(T\right)+\sup _{t \in T} \sum_{n \geq 0} \Delta_n\left(A_n\left(t\right)\right)\right) \leq C \exp \left(-u^2\right).
$$
\end{theorem}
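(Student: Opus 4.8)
The plan is to prove both inequalities by a single generic chaining construction, deriving the expectation bound and the tail bound from one family of per-link estimates; this is the classical argument (see \cite{talbook}), which I sketch here. First I would reduce to controlling $\sup_{t\in T}|X_t-X_{t_0}|$ for a fixed reference point $t_0\in T$, using $\sup_{s,t\in T}|X_s-X_t|\le 2\sup_{t\in T}|X_t-X_{t_0}|$, and I would establish everything for finite $T$ first: for a finite $F\subset T$ the restricted partitions $\mathcal{A}_n\cap F$ are again admissible and have smaller diameters, so the general case follows by taking the supremum over finite $F$, since both $\textsf{E}\sup_{s,t}|X_s-X_t|$ and $\sup_t\sum_n\Delta_n(A_n(t))$ are suprema over finite subsets. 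Since $|\mathcal{A}_0|=1$ we may put $\pi_0(t)=t_0$; for each $n\ge 1$ we choose a point $\pi_n(t)\in A_n(t)$ with $\pi_n(t)=t$ for $n$ large, and telescope $X_t-X_{t_0}=\sum_{n\ge 1}\bigl(X_{\pi_n(t)}-X_{\pi_{n-1}(t)}\bigr)$. Because $A_n(t)\subset A_{n-1}(t)$, both $\pi_n(t)$ and $\pi_{n-1}(t)$ lie in $A_{n-1}(t)$, so $\bigl\|X_{\pi_n(t)}-X_{\pi_{n-1}(t)}\bigr\|_{2^{n-1}}\le\Delta_{n-1}\bigl(A_{n-1}(t)\bigr)$; the absence of symmetry is irrelevant here, since these moment bounds feed directly into Markov's inequality.

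The only analytic input is Chebyshev's inequality at exponent $2^{n-1}$: for a fixed pair $s,t$ with $\|X_s-X_t\|_{2^{n-1}}\le\rho$ one has $\textsf{P}(|X_s-X_t|\ge\lambda\rho)\le\lambda^{-2^{n-1}}$. For the expectation bound I would fix an absolute constant $\lambda$ and union-bound over the at most $N_n^2=2^{2^{n+1}}$ possible values of the pair $(\pi_{n-1}(t),\pi_n(t))$: choosing $\lambda$ large enough that $\lambda^{-2^{n-1}}2^{2^{n+1}}\le 2^{-2^{n-1}}$ shows that, off an event of probability $\le 2^{-2^{n-1}}$, every level-$n$ link obeys $|X_{\pi_n(t)}-X_{\pi_{n-1}(t)}|\le\lambda\Delta_{n-1}(A_{n-1}(t))$ for all $t$. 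On the intersection of these events one gets $|X_t-X_{t_0}|\le\lambda\sum_{n\ge 1}\Delta_{n-1}(A_{n-1}(t))=\lambda\sum_{m\ge 0}\Delta_m(A_m(t))\le\lambda\sup_t\sum_m\Delta_m(A_m(t))$, and integrating the exponentially small failure probabilities through $\textsf{E} Z=\int_0^\infty\textsf{P}(Z>v)\,dv$ yields $\textsf{E}\sup_{s,t}|X_s-X_t|\lesssim\sup_t\sum_{n\ge 0}\Delta_n(A_n(t))$.

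For the tail inequality I would split the chain at the level $k$ with $2^k\le u^2<2^{k+1}$. The coarse part ranges over at most $N_k^2$ pairs $(\pi_k(s),\pi_k(t))$, each of $L^{2^k}$-norm at most $\Delta_k(T)$; Chebyshev at exponent $2^k$ with threshold $c\Delta_k(T)$, where $c$ absorbs a factor $e^{u^2/2^k}\le e^2$ coming from $2^k\le u^2<2^{k+1}$, gives $\textsf{P}\bigl(\sup_{s,t}|X_{\pi_k(s)}-X_{\pi_k(t)}|\ge c\Delta_k(T)\bigr)\le e^{-u^2}$, which produces the $c\Delta_k(T)$ term. For $n>k$ I would use level-dependent thresholds $v_n=\lambda_n\Delta_{n-1}(A_{n-1}(t))$ with $\lambda_n$ chosen so that the level-$n$ failure probability is $\le e^{-u^2}2^{-(n-k)}$; since $2^{n-1}\ge 2^k\ge u^2/2$ and $n-k\le 2^{n-1}$ for $n>k$, the resulting $\lambda_n$ stay bounded by an absolute constant, so $\sum_{n>k}v_n\lesssim\sup_t\sum_m\Delta_m(A_m(t))$ on the good event while $\sum_{n>k}e^{-u^2}2^{-(n-k)}\le e^{-u^2}$. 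Combining via $\sup_{s,t}|X_s-X_t|\le 2\sup_t|X_t-X_{\pi_k(t)}|+\sup_{s,t}|X_{\pi_k(s)}-X_{\pi_k(t)}|$ gives the claimed concentration statement with absolute constants $c,C$.

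The Chebyshev estimates and the geometric-series summations are routine; the delicate point is the calibration in the tail bound, i.e.\ choosing the thresholds $v_n$ ($n>k$) so that they \emph{simultaneously} sum (uniformly in $t$) to $O\bigl(\sup_t\sum_n\Delta_n(A_n(t))\bigr)$ and make the union-bound failure probabilities summable to $O(e^{-u^2})$, while isolating the contribution of the levels $n\le k$ into the single term $c\Delta_k(T)$ via one coarse union bound. A secondary issue is the reduction from infinite to finite $T$, which only needs the observation that restricting an admissible sequence to a finite subset keeps it admissible and does not increase the diameters or the relevant suprema.
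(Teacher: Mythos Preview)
The paper does not give its own proof of this statement: Theorem~\ref{them 2.4} is quoted from \cite{talbook} and used as a black box in the proof of Proposition~\ref{prop 2}. Your argument is the standard generic-chaining proof from that reference---telescoping along $\pi_n(t)$, applying Markov's inequality at exponent $2^{n-1}$ to each link, union-bounding over the $\le N_n^2$ possible link values, and for the tail bound splitting the chain at level $k$ with $2^k\le u^2<2^{k+1}$---and it is correct as written; your calibrations (e.g.\ $\lambda\ge 2^5$ for the expectation bound, $\lambda_n\lesssim e^2$ for the fine levels via $u^2/2^{n-1}\le 2$ and $(n-k)/2^{n-1}\le 1$) all check out.
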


\begin{lemma}[See Lemma 8.3.9 in Talagrand \cite{talbook}]\label{lemma 2.3}
Under (\ref{1}), given $\rho>0$, there exists $r_0$ depending on $C_0$ in (\ref{2}) and $\rho$ only, such that if $r \geq r_0$, for $u \in \mathbb{R}^{+}$we have
\begin{equation}\label{eq:lemma 2.3}
   B\left(8 r u\right) \subset \rho r B\left(u\right) . 
\end{equation}
\end{lemma}

Using Lemma \ref{lemma 2.3} for $u=2^n\left(\right.$ and since $\left.2^\kappa=8 r\right)$, we obtain that $B\left(2^{\kappa+n}\right) \subset \rho r B\left(2^n\right)$. So we can assume that the $\left(\varphi_j\right)_{j\in \mathbb{Z}}$ in Section 2 are $(\bar{\varphi_j})_{j\in \mathbb{Z}}$ defined in Definition \ref{def 2.1}.

\begin{proposition}\label{prop 2}
   Recall Definition \ref{growth conditon}. If the functions $F_{n, j}$ satisfy the growth condition, then 
   $$
   \textsf{E} \sup _{t \in T} X_t \lesssim_r \left(r Size\left(T\right)+r^{-\tilde{j}_0}\right).
   $$
\end{proposition}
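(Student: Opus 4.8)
\textbf{Proof proposal for Proposition \ref{prop 2}.}

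The plan is to run the generic chaining machinery of Theorem \ref{them 2.4} on the canonical process $(X_t)_{t\in T}$, and to show that the chaining functional it produces is controlled by $\gamma_\psi(T)$ for the family $\psi = (\bar\varphi_j)_{j\in\mathbb{Z}}$, which in turn (via Theorem \ref{lower bound}) is bounded by $F_{0,j^\psi_0}(T) + r^{-\tilde j_0}$, and finally to identify $F_{0,j^\psi_0}(T)$ with $Size(T)$ up to constants depending only on $r$. The starting observation is the translation made right before the statement: by Lemma \ref{lemma 2.3} (applied with $u = 2^n$, so $B(2^{\kappa+n}) \subset \rho r B(2^n)$) we may take $\psi_j = \bar\varphi_j$, and the hypotheses needed in Section~3 — namely $B^\psi_j(2^{n+\kappa}) \subset \rho B^\psi_{j-1}(2^n)$, $B^\psi_j(2^n) = r B^\psi_{j+1}(2^n)$ and $B^\psi_j(2^{n+1}) \subset 2 B^\psi_j(2^n)$ — hold for the rescaled balls $B_j$, using convexity of the norms $\mathcal N_u$ and the $\Delta_2$-type growth. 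So $\gamma_\psi(T)$, $\{F_{n,j}\}$, and the contraction/interpolation apparatus all apply verbatim.

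First I would translate the quantity $\gamma_\psi(T)$ into a bound on $\textsf{E}\sup_{s,t}|X_s - X_t|$. By definition of $\bar\varphi_j$ and of $B(u)$, the condition $\bar\varphi_{j^\psi_n(A)}(s,t) \le 2^n$ for all $s,t \in A$ means $s/4 - t/4 \in r^{-j^\psi_n(A)} B(2^n)$, hence by \eqref{| |< B} we get $\|X_s - X_t\|_{2^n} = \|X_{s-t}\|_{2^n} \lesssim 2^n \cdot 4 r^{-j^\psi_n(A)} \cdot 2^n$ — more precisely, scaling $s-t$ into $B(2^n)$ and using $(\textsf{E}|X_{s-t}|^{2^n})^{1/2^n} \le C 2^n$ gives $\Delta_n(A) \lesssim 2^n r^{-j^\psi_n(A)}$ for the distance $\delta_n(s,t) = \|X_s - X_t\|_{2^n}$ appearing in Theorem \ref{them 2.4} (one has to be slightly careful matching $2^n$ in the moment exponent with $2^n$ in the ball radius, but this is exactly the normalization chosen). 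Choosing the admissible sequence $(\mathcal A_n)$ that nearly attains the infimum in the definition of $\gamma_\psi(T)$, Theorem \ref{them 2.4} then yields
$$
\textsf{E} \sup_{s,t\in T} |X_s - X_t| \lesssim \sup_{t\in T}\sum_{n\ge 0}\Delta_n(A_n(t)) \lesssim \sup_{t\in T}\sum_{n\ge 0} 2^n r^{-j^\psi_n(A_n(t))} \approx \gamma_\psi(T),
$$
and $\textsf{E}\sup_{t\in T} X_t \le \textsf{E}\sup_{s,t}|X_s-X_t|$ since $X$ is symmetric (so centered and $\textsf{E} X_{t_0}=0$ for any fixed $t_0$).

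Next I would invoke Theorem \ref{lower bound}: since $\{F_{n,j}\}$ is assumed to satisfy the growth condition (and in particular, by rescaling the constants, the growth condition for $c_2 r$ with $c_2$ large enough — this is where one needs to check that the growth-condition inequality in Definition \ref{growth conditon}, which is precisely the content of the separation/covering geometry built into the iterative organized tree, matches the $F_{n,j}$ defined in \eqref{nf}), we obtain $\gamma_\psi(T) \lesssim_r F_{0,j^\psi_0}(T) + r^{-j^\psi_0(T)}$. Finally, by \eqref{nf} with $n=0$, $F_{0,j^\psi_0}(T) = \sup\{\rho(\Gamma_{(\mathbf n,\mathbf j)}(T))\} = Size(T)$ (the constraint $n_0^{(0)}\ge 0$ is vacuous and $j^\psi_0 = \tilde j_0 - 1 < \tilde j_0$ puts us in the second branch of the definition of $F_{n,j}$), and $j^\psi_0(T) = \tilde j_0$ up to an additive constant absorbed into $r^{-\tilde j_0}$ via the factor $r$. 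Assembling the chain $\textsf{E}\sup_t X_t \lesssim \gamma_\psi(T) \lesssim_r F_{0,j^\psi_0}(T) + r^{-j^\psi_0(T)} \lesssim_r Size(T) + r^{-\tilde j_0}$, and tracking that one application of Theorem \ref{them 2.4} costs an extra factor $r$ relative to $\gamma_\psi$ (this is the origin of the $r\,Size(T)$ rather than $Size(T)$), gives the claim.

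The main obstacle I expect is verifying that the functionals $F_{n,j}$ of \eqref{nf} genuinely satisfy the growth condition of Definition \ref{growth conditon} — i.e.\ that from a family of points $(t_i)_{i\le N_n}$ that are $\bar\varphi_{j+1}$-separated at scale $2^n$ and all lie in a single $C_2 B^\psi_j(u,2^n)$, together with arbitrary subsets $H_i \subset B^\psi_{j+2}(t_i,2^{n+\kappa})$, one can glue together the near-optimal iterative organized trees on each $H_i$ into a single tree on $\bigcup_i H_i$ whose size is at least $C(r) 2^n r^{-j} + \min_i F_{n+\kappa,j+2}(H_i)$. This requires the disjointness guaranteed by Lemma \ref{lemma 2} (the separation hypothesis gives disjoint $B_{\mathbf j(t_i)}(t_i,2^n) + B_{\mathbf j(t_i)}(2^n)$), the nesting $B_{j+1}(2^n) \subset B_j(2^n)$-type inclusions to check condition (4) of the parameterized separation tree, and the bookkeeping that $\log|c(p(\cdot))| \ge \log N_n = 2^n \log 2$ contributes the additive $2^n r^{-j}$ term — and it is exactly the step where the precise choice $J = j+1+\frac1\kappa(n_0^{(0)}-n)$ in \eqref{nf} is forced. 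I would expect this verification to be a separate lemma; granting it, the proof of Proposition \ref{prop 2} is the short assembly above.
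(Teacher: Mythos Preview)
Your proposal is correct and follows essentially the same route as the paper: bound the $L^{2^n}$-diameters $\Delta_n(A)$ by $2^n r^{-j^\psi_n(A)}$ via \eqref{| |< B}, feed this into Theorem~\ref{them 2.4} to get $\textsf{E}\sup_t X_t \lesssim \gamma_\psi(T)$, then apply Theorem~\ref{lower bound} and identify $F_{0,j^\psi_0}(T)$ with $Size(T)$. The paper's proof is just a terse version of your first two paragraphs; your closing paragraph about the ``main obstacle'' is really the content of Proposition~\ref{prop 3}, which the paper indeed treats separately, so you were right to flag it as a distinct lemma rather than part of this proof.
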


\begin{proof}
    For $s, t \in A \in \mathcal{A}_n$ we have $s-t \in 8r^{-j_n\left(A\right)} B\left(2^n\right)$. So by (\ref{| |< B}), we have $$\left\|X_s-X_t\right\|_{2^n}=\left\|X_{s-t}\right\|_{2^n} \leq C 2^n r^{-j_n\left(A\right)}.$$
    This means that the diameter of $A_n\left(t\right)$ for the distance of $L^p$ with $p=2^n$ is  smaller than $ C 2^n r^{-j_n\left(A\right)}$. In fact,  $$\textsf{E} \sup _{t \in T/4} X_t=\frac{1}{4}\textsf{E} \sup _{t \in T} X_t.$$
    Combining Theorem \ref{them 2.4} with Theorem \ref{lower bound}, we obtain this proposition. 
\end{proof}

Furthermore, we have

\begin{proposition}\label{prop 3}
    Recall Definition \ref{growth conditon}. Let $\{F_{n,j}\}$ be what we define in (\ref{nf}), then $\{F_{n, j}\}$ satisfies the growth condition for $(\bar{\varphi}_j)_{j\in \mathbb{Z}}$ under conditions (\ref{1}) and (\ref{2}). 
\end{proposition}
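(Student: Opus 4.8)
The plan is to verify the growth condition from Definition~\ref{growth conditon} for the functionals $F_{n,j}(T)$ defined in \eqref{nf}, which amounts to showing that given points $(t_i)_{i\le N_n}$ that are $\bar\varphi_{j+1}$-separated at scale $2^n$ and jointly contained in some $C_2 B^{\bar\varphi}_j(u,2^n)$, and given arbitrary sets $H_i\subset B^{\bar\varphi}_{j+2}(t_i,2^{n+\kappa})$, one has
\[
F_{n,j}\Bigl(\bigcup_{i\le N_n}H_i\Bigr)\;\ge\; C(r)\,2^n r^{-j}+\min_{i\le N_n}F_{n+\kappa,j+2}(H_i).
\]
The idea is a concatenation of iterative organized trees: for each $i$ pick a near-optimal iterative organized tree $\Gamma_i$ on $H_i$ witnessing $F_{n+\kappa,j+2}(H_i)$ with $n_0^{(0)}(\Gamma_i)\ge n+\kappa$, then glue them by taking $\bigcup_i H_i$ as the root, declaring the roots $H_i$ of the $\Gamma_i$ to be its children, and assigning to each $H_i$ the parameters $\mathbf{n}(H_i)=n$ and $\mathbf{j}(H_i)=j$ (or the separating index $\mathbf{j}(t_i)$ produced by the construction at scale $2^n$; one must check this index is $\ge$ the required $\tilde j_0$ or $J$ bound using $C_2 B^{\bar\varphi}_j(u,2^n)$ containment). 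This produces a legitimate iterative organized tree on $\bigcup_i H_i$ with $n_0^{(0)}\ge n$, provided the separation axiom (6) and the nesting axiom (4) of a parameterized separation tree are met.

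The key verifications, in order, are: (i) the $\bar\varphi_{j+1}$-separation of the $t_i$ at scale $2^n$ forces, via Lemma~\ref{lemma 2}, that the translated balls $t_i+B^{\bar\varphi}_{\mathbf{j}(t_i)}(2^n)$ are pairwise disjoint, so the children $H_i$ satisfy the disjointness axiom (6) — here one needs $H_i\subset t_i+B^{\bar\varphi}_{j}(2^n)$, which follows from $H_i\subset B^{\bar\varphi}_{j+2}(t_i,2^{n+\kappa})\subset 2B^{\bar\varphi}_{j+2}(t_i,2^n)$ together with the assumption $B^{\bar\varphi}_{j+2}(2^{n+\kappa})\subset \rho B^{\bar\varphi}_{j+1}(2^n)$ and $\rho$ small; (ii) the nesting $B^{\bar\varphi}_{\mathbf{j}(\text{child of }H_i)}(2^{n_0^{(0)}(\Gamma_i)})\subset B^{\bar\varphi}_{\mathbf{j}(H_i)}(2^n)$ required when passing from $H_i$ down into $\Gamma_i$, which should come from the definition of $\mathbf{j}(t)$ inside the construction of $\Gamma_i$ combined with monotonicity of the $B^{\bar\varphi}_j$'s in $j$ and $n$; (iii) the count: $|c(\bigcup_i H_i)|=\min\{|\bigcup_i H_i|,N_n\}$, which holds since there are exactly $N_n$ of the $t_i$ (or fewer, handled by the $|A|<N_n$ branch); (iv) finally the quantitative bound $\rho(\Gamma)\ge \log(N_n)\,r^{-j}+\min_i\rho(\Gamma_i)$ from the definition of $\rho$, since every branch of $\Gamma$ through a child $H_i$ picks up the term $\log(|c(\text{root})|)\,r^{-\mathbf{j}(H_i)}=\log N_n\cdot r^{-j}$ and then at least $\rho(\Gamma_i)\ge F_{n+\kappa,j+2}(H_i)-\epsilon$; noting $\log N_n=2^n\log 2\ge c\,2^n$ gives the constant $C(r)$.

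One must also handle the bookkeeping around the special root index $J$: when $j\ge\tilde j_0$ the definition \eqref{nf} uses $J=j+1+\frac1\kappa(n_0^{(0)}-n)$, so the glued tree must respect the relation between its $\mathbf{j}$-value at the top and $J$; checking $J\ge\tilde j_0$ (or reducing to the $j<\tilde j_0$ case) and that the exponent arithmetic $J=j+1+\frac1\kappa(n_0^{(0)}-n)$ is consistent after the concatenation is a somewhat delicate but routine computation. One also needs the monotonicity hypotheses on $\{F_{n,j}\}$ assumed in Definition~\ref{growth conditon} — $F_{n,j}\ge\max\{F_{n+\kappa,j+1},F_{n,j+1}\}$ and the boundary normalization $F_{n,j}\le F_{0,j^\psi_0}$ — which follow directly from \eqref{nf} since enlarging the allowed range of $n_0^{(0)}$ or lowering $j$ only enlarges the supremum.

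I expect the main obstacle to be step~(ii): ensuring that the nesting axiom $B^{\bar\varphi}_{\mathbf{j}(A)}(2^{\mathbf{n}(A)})\subset B^{\bar\varphi}_{\mathbf{j}(p(A))}(2^{\mathbf{n}(p(A))})$ survives the splice at the seam where the top $N_n$-level children $H_i$ meet the first genuine generation of each $\Gamma_i$. The subtlety is that $\Gamma_i$ was built as a \emph{free-standing} iterative organized tree on $H_i$, so its internal $\mathbf{j}$-values were only constrained relative to $\max\{\tilde j_0, J_i\}$ for $H_i$'s own data, not relative to the externally imposed $\mathbf{j}(H_i)=j$; one has to re-run (or note the robustness of) the infimum defining $\mathbf{j}(t)$ for $t\in T_{n+\kappa}(H_i)$ now with the extra nesting constraint $B^{\bar\varphi}_{j(t)}(2^{n+\kappa})\subset B^{\bar\varphi}_j(2^n)$ imposed, and argue this does not degrade the contribution of $\Gamma_i$ by more than a factor absorbable into $C(r)$ — using precisely the scale-separation $B^{\bar\varphi}_{j+2}(2^{n+\kappa})\subset\rho B^{\bar\varphi}_{j+1}(2^n)$ together with $r=2^{\kappa-3}$ to see that going up two indices from $j$ already lands inside $B^{\bar\varphi}_j$, so the constraint is automatically satisfied and costs nothing. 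Once this is pinned down, the rest is assembling the pieces.
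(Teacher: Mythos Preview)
Your approach is exactly the paper's: glue a near-optimal iterative organized tree $\Gamma_i$ for each $H_i$ beneath a common root $\bigcup_i H_i$, declare the $H_i$ to be its children at level $\mathbf{n}=n$, and read off the growth inequality from $\rho$ of the glued tree.

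The one place where your sketch is thinner than it needs to be is the resolution of your step~(ii), the nesting at the seam. You propose that a single application of $B_{j+2}(2^{n+\kappa})\subset\rho\, B_{j+1}(2^n)$ settles it, but that only covers the case $\mathbf{n}(A)=n+\kappa$ for $A$ in the first genuine generation of $\Gamma_i$. In general $n_0^{(0)}(\Gamma_i)$ can be any integer $\ge n+\kappa$, and the inclusion to check is $B_{\mathbf{j}(A)}(2^{\mathbf{n}(A)})\subset B_{\mathbf{j}(H_i)}(2^n)$ with $\mathbf{n}(A)$ possibly much larger than $n+\kappa$. What actually closes this gap is precisely the $J$-bookkeeping you set aside as ``routine'': the definition of $F_{n+\kappa,j+2}$ in \eqref{nf} forces $\mathbf{j}(A)\ge J=(j+2)+1+\tfrac{1}{\kappa}\bigl(n_0^{(0)}-(n+\kappa)\bigr)$, so $\mathbf{j}(A)$ grows linearly with $\mathbf{n}(A)$. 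Writing $n+\kappa t\le\mathbf{n}(A)<n+\kappa(t+1)$ for some $t\ge1$ gives $\mathbf{j}(A)\ge j+2+t$, and then iterating Lemma~\ref{lemma 2.3} yields
\[
B_{\mathbf{j}(A)}\bigl(2^{\mathbf{n}(A)}\bigr)\subset B_{j+2+t}\bigl(2^{n+\kappa(t+1)}\bigr)\subset(\rho r)^{t+1}B_{j+2+t}\bigl(2^n\bigr)\subset B_{j+1}\bigl(2^n\bigr).
\]
So the $J$ formula is not side bookkeeping separate from the nesting problem --- it \emph{is} the mechanism that makes the seam nesting automatic for arbitrary $n_0^{(0)}$, and you should merge those two threads rather than treat them in parallel. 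A minor point: the paper obtains $\mathbf{j}(H_l)\le j+1$ (not $=j$) from $H_l\subset t_l+r^{-j-1}B(2^n)$, which is how the constant $C(r)=1/r$ emerges in the growth inequality.
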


\begin{proof}
Obviously, functions $F_{n,j}$ is a non-decreasing map from the subsets of $T$ to $\mathbb{R}^{+}$. We also have $$F_{n,j}\geq \max\{F_{n+\kappa,j+1},F_{n,j+1}\}$$ and $$F_{n,j}\leq F_{n,k}=F_{0,j_0}$$ for $\forall n \geq 0$ and $\forall j,k\in \mathbb{Z}$ such that $j \geq j_{0}=\Tilde{j}_{0}(T)-1\geq k$.

Using Lemma~\ref{lemma 2.3} with $u=2^n$ and the identity $2^\kappa = 8r$,
 we obtain $B\left(2^{\kappa+n}\right) \subset \rho r B\left(2^n\right)$. Therefore, $H_{\ell} \subset t_{\ell}+4\rho r^{-j-1} B\left(2^n\right)$. We can assume that $\rho \leq \frac{1}{4}$ so that $H_{\ell} \subset t_{\ell}+ r^{-j-1} B\left(2^n\right)$. We  set 
  $$\bigcup_{l\leq N_{n}} H_l=\mathcal{B}_{-1},\{H_{l}\}_{l\leq N_{n}} = \mathcal{B}_{0},~\mathbf{n}(H_l)=n \text{ for }\forall l\leq N_n.$$
Then it is easy to see that $\textbf{j}\left(H_{l}\right) \leq j+1$. For $\forall \epsilon >0$, there exists $\Gamma _{\left(\mathbf{n}, \mathbf{j}\right)}\left(H_{l}\right)$ with $\mathbf{n}\left(A\right)\geq n+\kappa, \mathbf{j}\left(A\right) \geq J = j+3+(1/\kappa)(n(A)-n-\kappa) \text{ for } \forall A \in \mathcal{B}_{0}$ such that 
 $$\rho\left(\Gamma _{\left(\mathbf{n}, \mathbf{j}\right)}\left(H_{l}\right)\right)\geq F_{n+\kappa,j+2}\left(H_{l}\right)-\epsilon$$ 
  for $\forall l\leq N_n$. We can define an iterative organized tree by induction.  We separate every set of $\mathcal{B}_0$, and obtain $\mathcal{B}_1$ following the previously presented procedure,  We need to demonstrate $B_{\mathbf{j}\left(A\right)}\left(2^{\mathbf{n}\left(A\right)}\right)\subset B_{j+1}\left(2^{n}\right) \text{ for } \forall A\in \mathcal{B}_{1}$. There exists a number $s\geq 1$ such that $n+\kappa s\leq \mathbf{n}(A)< n+\kappa(s+1)$. So we have
 $$
B_{\mathbf{j}\left(A\right)}\left(2^{\mathbf{n}\left(A\right)}\right)\subset B_{j+2+s}\left(2^{n+\kappa(s+1)}\right)\subset (\rho r)^{s+1} B_{j+2+s}\left(2^{n}\right) \subset B_{j+1}\left(2^{n}\right).
 $$ 
 And we have $B_{\mathbf{j}\left(A\right)}\left(2^{\mathbf{n}\left(A\right)}\right)\subset B_{j+1}\left(2^{n}\right) \text{ for } \forall A\in \mathcal{B}_{1}$.

 If we have already defined $\mathcal{B}_n$, we can obtain  $\mathcal{B}_{n+1}$ through $\mathcal{B}_n$. We set $\bigcup _{l}\mathcal{B}_{n}\left(H_l\right)=\mathcal{B}_{n+1}$. Then  $\mathcal{B}_{-1}\bigcup\mathcal{B}_{0}\bigcup \left(\bigcup _{n\geq 1}\mathcal{B}_{n}\right)$ is a new tree $\Gamma _{\left(\mathbf{n}, \mathbf{j}\right)}\left(\bigcup_{l\leq N_{n}} H_{l}\right)$. And we have 
 $$\rho \left(\Gamma _{\left(\mathbf{n}, \mathbf{j}\right)}\left(\bigcup_{\ell \leq N_n} H_{l}\right)\right) \geq \frac{1}{r}2^n r^{-j} + \min _{\ell \leq N_n} \rho\left(\Gamma _{\left(\mathbf{n}, \mathbf{j}\right)}\left(H_{l}\right)\right).
 $$
 Thus we have
 \begin{align*}
 F_{n, j}\left(\bigcup_{\ell \leq N_n} H_{\ell}\right) &\geq \rho \left(\Gamma _{\left(\mathbf{n}, \mathbf{j}\right)}\left(\bigcup_{\ell \leq N_n} H_{l}\right)\right) \\
 &\geq 2^n r^{-j-1}+\min _{\ell \leq N_n} F_{n+\kappa,j+2}\left(H_{\ell}\right) - \epsilon, \text{ for }\forall \epsilon > 0,
 \end{align*}
 which implies the growth condition.

\end{proof}

\begin{theorem}\label{them 2.5}
    Under conditions (\ref{1}) and (\ref{2}), $$\textsf{E} \sup _{t \in T} X_t \lesssim_r Size\left(T\right).$$
\end{theorem}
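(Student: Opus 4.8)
The plan is to assemble the pieces already proved in this section; the proof itself is short. As noted just before Proposition~\ref{prop 2}, Lemma~\ref{lemma 2.3} lets us run the abstract machinery of Section~3 with the family $(\psi_j)_{j\in\mathbb{Z}}=(\bar\varphi_j)_{j\in\mathbb{Z}}$, for which $j^{\bar\varphi}_0(T)=\tilde{j}_0$ and $j^{\bar\varphi}_0=\tilde{j}_0-1$. By Proposition~\ref{prop 3} the functionals $\{F_{n,j}\}$ of \eqref{nf} satisfy the growth condition for this family; plugging this into Proposition~\ref{prop 2}, and using $Size(T)=F_{0,\tilde{j}_0-1}(T)$ together with $r^{-j^{\bar\varphi}_0(T)}=r^{-\tilde{j}_0}$, we obtain
\[
\textsf{E}\sup_{t\in T}X_t\ \lesssim_r\ r\,Size(T)+r^{-\tilde{j}_0}.
\]
Inside Proposition~\ref{prop 2} this last step is Theorem~\ref{lower bound} (producing $\gamma_{\bar\varphi}(T)\lesssim_r Size(T)+r^{-\tilde{j}_0}$), followed by Theorem~\ref{them 2.4} converting the $\gamma$-functional of a well-chosen admissible sequence into a chaining bound for $\textsf{E}\sup_{s,t\in T}|X_s-X_t|$, together with $\textsf{E}\sup_{t\in T}X_t\le\textsf{E}\sup_{s,t\in T}|X_s-X_t|$, which holds because the canonical process is centered.

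It then remains only to absorb the radius term, i.e.\ to show $r^{-\tilde{j}_0}\lesssim_r Size(T)$. If $|T|\le 1$ then $\bar\varphi_j\equiv 0$, $\tilde{j}_0=+\infty$, and both sides vanish. For $|T|\ge 2$ one picks $s^{*}\neq t^{*}$ with $\bar\varphi_{\tilde{j}_0+1}(s^{*},t^{*})>1$ (such a pair exists by maximality of $\tilde{j}_0$) and exhibits an iterative organized tree of $T$ whose largest element already separates at the top scale with parameter $J=\tilde{j}_0$, so that its top-scale contribution to $\rho$ is $\gtrsim_r r^{-\tilde{j}_0}$; this witnesses $Size(T)\gtrsim_r r^{-\tilde{j}_0}$. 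Combining with the displayed inequality gives $\textsf{E}\sup_{t\in T}X_t\lesssim_r r\,Size(T)\lesssim_r Size(T)$, which is the theorem.

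The substantive content — the growth condition (Proposition~\ref{prop 3}) and the contraction-principle/generic-chaining estimate built on it (Proposition~\ref{prop 2}, via Theorems~\ref{lower bound} and~\ref{them 2.4}) — is already behind us, so the only genuinely delicate point is the last step: one has to check that the minimal tree used for the radius estimate really satisfies conditions (1)--(6) defining an iterative organized tree (in particular the nesting $B_{\mathbf{j}(A)}(2^{\mathbf{n}(A)})\subset B_{\mathbf{j}(p(A))}(2^{\mathbf{n}(p(A))})$ and the disjointness of the translated balls $A_1+B_{\mathbf{j}(A_1)}(2^{\mathbf{n}(A_1)})$ and $A_2+B_{\mathbf{j}(A_2)}(2^{\mathbf{n}(A_2)})$), and that the top-scale $\mathbf{j}$-value it carries stays $\tilde{j}_0+O(1)$, so that its functional $\rho$ is genuinely comparable to $r^{-\tilde{j}_0}$ rather than smaller. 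Put differently, the obstacle is to make sure that the unavoidable ``$+\,r^{-\tilde{j}_0}$'' appearing in the chaining estimate of Proposition~\ref{prop 2} is actually controlled by $Size(T)$ and is not a separate source of loss.
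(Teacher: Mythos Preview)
Your proposal is correct and follows essentially the same route as the paper: combine Proposition~\ref{prop 3} with Proposition~\ref{prop 2} to obtain $\textsf{E}\sup_{t\in T}X_t\lesssim_r Size(T)+r^{-\tilde{j}_0}$, and then absorb the radius term by exhibiting a minimal iterative organized tree witnessing $r^{-\tilde{j}_0}\lesssim_r Size(T)$. The paper dispatches this last step in one line by taking $n_0^{(0)}=0$ and observing that the resulting $\mathbf{j}$-value is at most $\tilde{j}_0$; your version, which explicitly picks a pair $s^*,t^*$ with $\bar\varphi_{\tilde{j}_0+1}(s^*,t^*)>1$ and flags the need to verify the tree axioms and that the top-scale $\mathbf{j}$ stays $\tilde{j}_0+O(1)$, is the same argument made more explicit.
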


\begin{proof}
    Combining Proposition \ref{prop 3} and Proposition \ref{prop 2}, it is clear that the only thing we need to prove is $r^{\Tilde{j}_0}\leq CSize\left(T\right)$. To obtain this inequality,  we can choose $n_0^{(0)}=0$ and then $\mathbf{j}(t)\leq \Tilde{j}_0$ for an arbitrary point $t\in T$ .
\end{proof}

For a parameterized separation tree, we introduce

\begin{equation}
\label{3.24-1}
\tau \left(\mathcal{T}\right)=\inf _{t \in \mathcal{T}}\sum_{t\in A, A\in \cup_{i\ge 0}\mathcal{B}_i}\log\left(\left|c\left(p\left(A\right)\right)\right|\right)r^{-\mathbf{j}\left(A\right)}, 
\end{equation}

and we have 

\begin{theorem}\label{them 3.1}
    Under (\ref{1}) and (\ref{2}), we have
    $$
     \sup \{\tau \left(\mathcal{T}\right); \mathcal{T} \text{ parameterized separation tree of } T\}\sim_r\sup _{\mu} \int_T I_\mu\left(t\right) \mathrm{d} \mu\left(t\right).
    $$
\end{theorem}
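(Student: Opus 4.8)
The plan is to establish the equivalence by sandwiching both quantities between each other through the iterative organized tree, which we already know (by Theorem \ref{mr}, i.e. Steps (1)--(3)) satisfies $Size(T)\sim_r \textsf{E}\sup_{t\in T}X_t\sim_r \sup_\mu\int_T I_\mu\,d\mu$. Since every iterative organized tree is a parameterized separation tree, and $\tau(\mathcal{T})$ restricted to an iterative organized tree is exactly $\rho(\Gamma_{(\mathbf{n},\mathbf{j})}(T))$, we immediately get
$$
\sup\{\tau(\mathcal{T}):\mathcal{T}\text{ parameterized separation tree}\}\;\geq\;Size(T)\;\gtrsim_r\;\sup_\mu\int_T I_\mu(t)\,d\mu(t),
$$
which is one direction. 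So the real content is the reverse inequality: every parameterized separation tree $\mathcal{T}$ has $\tau(\mathcal{T})\lesssim_r \sup_\mu\int_T I_\mu\,d\mu$.

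For that direction I would mimic the proof of Theorem \ref{them 2.2}, but working directly with the axioms of a parameterized separation tree rather than with the explicit construction. Given $\mathcal{T}$, choose a probability measure $\mu$ supported on $S_{\mathcal{T}}$ that, along each branch, distributes mass so that a child $A$ of $B$ receives $\mu$-mass at most $|c(B)|^{-1}\mu(B)$ — concretely, spread uniformly over the children when $|c(B)|=N_{\mathbf{n}(A)}$ and put a point mass when $|A|<N_{\mathbf{n}(A)}$ (in the latter case $\mathbf{j}(A)=\infty$ by axiom (5), so that term contributes nothing to $\tau$). The key geometric input is axiom (6): if $A_1,A_2\in c(A)$ then the sets $A_i+B_{\mathbf{j}(A_i)}(2^{\mathbf{n}(A_i)})$ are disjoint. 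Combined with axiom (4), an induction along the tree (exactly as in Proposition \ref{proposition 1}) shows that for $A\in \mathcal{B}_n$ the translated balls $t+B_{\mathbf{j}(A_n(t))}(2^{\mathbf{n}(A_n(t))})$ are pairwise disjoint across $\mathcal{B}_n$, hence $\mu(t+B_{\mathbf{j}(A_n(t))}(2^{\mathbf{n}(A_n(t))}))\leq \mu(A_n(t))\leq N_{\mathbf{n}(A_n(t))}^{-1}$ whenever $|c(p(A_n(t)))|=N_{\mathbf{n}(A_n(t))}$. By the definition \eqref{new11} of $\bar{j}_n(t)$ this yields $\mathbf{j}(A_n(t))\leq \bar{j}_{\mathbf{n}(A_n(t))}(t)$ — here one uses that $B_{\mathbf{j}}(2^{\mathbf{n}})$ is, up to absolute constant dilation and the passage $\varphi\leftrightarrow\bar\varphi$, the sublevel set of $\varphi$, so that membership in the ball of radius $2^{\mathbf{n}}$ forces $\varphi_{\mathbf{j}}(t,\cdot)\lesssim 2^{\mathbf{n}}$. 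Summing $\log|c(p(A_n(t)))|\,r^{-\mathbf{j}(A_n(t))}\leq 2^{\mathbf{n}(A_n(t))}r^{-\bar j_{\mathbf{n}(A_n(t))}(t)}$ over the branch through $t$ (using that the exponents $\mathbf{n}$ strictly increase by axiom (2), so there is at most one term per level $n$ up to a bounded overcount) gives $\tau(\mathcal{T})\leq \sum_{n\geq 0}2^n r^{-\bar j_n(t)}=I_\mu(t)$ for every $t\in S_{\mathcal{T}}$, and integrating against $\mu$ finishes the bound.

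The main obstacle is the bookkeeping that turns a parameterized separation tree into something comparable to the right-hand side without the quantitative relationship between $\mathbf{n}$ and $\mathbf{j}$ that the iterative organized tree provides: in a general parameterized separation tree the $\mathbf{n}$-values along a branch may jump, so a single level $n$ may be ``skipped'' or a child may sit at $\mathbf{n}(A)\gg \mathbf{n}(p(A))+1$, and one must check that the corresponding $r^{-\mathbf{j}(A)}$ is still absorbed into $\sum_{n\geq \mathbf{n}(p(A))}2^n r^{-\bar j_n(t)}$ — this is where the monotonicity $\bar\varphi_{j+1}\geq r\bar\varphi_j$ of Lemma \ref{lemma 1} and the inclusion $B_j(2^{n+1})\subset 2B_j(2^n)$ are used to trade off powers of $r$ against powers of $2$. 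A second, milder point is the root: axioms (1) and (3) pin $\mathbf{j}$ at the root to $J\geq \tilde j_0\geq \bar j_0$, so the root contributes a term $\lesssim r^{-\bar j_0}\lesssim_r \textsf{E}\sup_t X_t\lesssim_r \sup_\mu\int_T I_\mu\,d\mu$ by Theorem \ref{them 2.1}, matching what we want. Once both inequalities are in hand the stated $\sim_r$ follows.
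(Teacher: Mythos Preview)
Your proposal is correct and follows essentially the same route as the paper. The paper's proof is two lines: for the lower bound it observes that every iterative organized tree is a parameterized separation tree with $\tau$ equal (up to constant) to $\rho$, which is exactly your first paragraph; for the upper bound it simply says ``the proof of the second inequality is the same as the proof of Theorem \ref{them 2.2}'', which is exactly what you propose to do.

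One remark: the ``main obstacle'' you flag---that $\mathbf{n}$-values may jump along a branch and one must trade powers of $r$ against powers of $2$---does not actually arise. The proof of Theorem \ref{them 2.2} never uses any quantitative relation between $\mathbf{n}(A)$ and $\mathbf{n}(p(A))$ beyond strict monotonicity (axiom (2)); it matches each generation $A$ to the single term $2^{\mathbf{n}(A)} r^{-\bar j_{\mathbf{n}(A)}(t)}$ in $I_\mu(t)$, and strict monotonicity of $\mathbf{n}$ along a branch guarantees these terms are distinct. Proposition \ref{proposition 1} likewise uses only axioms (4) and (6), so it holds verbatim for any parameterized separation tree. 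Thus the argument in Theorem \ref{them 2.2} transfers without the extra bookkeeping you anticipate.
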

\begin{proof}
    In fact, it is obvious that any iterative organized tree $\Gamma_{\left(\mathbf{n}, \mathbf{j},J\right)}(T)$ is a parameterized separation tree with two parameters, $\mathbf{n}$ and $\mathbf{j}$, which are defined the same as in the definition of $\Gamma_{\left(\mathbf{n}, \mathbf{j},J\right)}(T)$, and we can denote this parameterized separation tree by $\mathcal{T}_{n_0^{\left(0\right)}}$. Therefore, it is easy to see that  $C\rho\left(\Gamma_{\left(\mathbf{n}, \mathbf{j},J\right)}(T)\right) = \tau \left(\mathcal{T}_{n_0^{\left(0\right)}}\right)$ for some constant $C$. So the first inequality is evidently true.

    The proof of the second inequality is the same as the proof of Theorem \ref{them 2.2}.
\end{proof}
    
    Combined with Theorem \ref{them 2.1}, Theorem \ref{them 2.5}, and Theorem \ref{them 3.1}, we prove Theorem \ref{dmr}.

\section{ Important example}

  An iterative organized tree is essential in our proof. Its construction can be regarded as a complex algorithm. Therefore, it is necessary for us to discuss the construction of such trees in the context of specific examples. 
 When $U_i\left(x\right)=x^p$ for all $x\ge 0$ and $i$, 
                $$\textsf{P}(|Y_i|\ge x)=\exp\{-x^p\}$$
   where $1\le p\le 2$, it is an important special case for log-concave tailed distributions. In this section, we discuss two special cases: $p=1$ and $p=2$. We denote by $B_p$ the unit ball in $l^p$ for $p=1$ or $p=2$. 

In this section, the structure of iterative organized trees can be described in more details. Therefore, the resulting trees differ slightly from those in the
previous section, and some results need to be proven again.

\subsection{Case 1: $p=2$.}

Then it is easy to see $x^2\leq \hat{U}_i\left(x\right)\leq 2x^2$ and 
$$
\sqrt{u/2}\left\| t \right\|_2 \leq \mathcal{N}_u\left(t\right)\leq \sqrt{t}\left\| t \right\|_2 ,  
$$
which implies
\begin{equation}\label{case_p_2}
\sqrt{u}B_2\subset B\left(u\right)\subset \sqrt{2u}B_2.    
\end{equation}

In this case, we define a tree $\mathcal{T}^{(2)}$:(In this example, $r\geq 8.$)
\begin{enumerate}
    \item $\mathbf{j}(A)=J$, $\mathbf{n}(A)=-1$ if $A$ is the largest element of $\mathcal{T}^{(2)}$.
    \item $\mathbf{n}\left(A\right)>\mathbf{n}\left(p\left(A\right)\right)$.
    \item $\mathbf{j}\left(A\right)\geq \Tilde{j}_0$ if $p(A)$ is the largest element. 
    \item $r^{\mathbf{j}(A)-\mathbf{j}(p(A))}\geq 2^{\frac{\mathbf{n}(A)-\mathbf{n}(p(A))+1}{2}}$ if $p(A)$ is not the largest element.
    \item $\mathbf{j}\left(A\right)=\infty$ if $|A|<N_{\mathbf{n}\left(A\right)}$; $|c\left(p(A)\right)|=\min \{|A|,N_{\mathbf{n}\left(A\right)}\}$.
    \item $\delta_2\left(A\right)=2^{\frac{\mathbf{n}\left(A\right)}{2}}r^{-\mathbf{j}\left(A\right)}$. If $A_1, A_2 \in c\left(A\right)$, then $ (A_1+\delta_2\left(A_1\right)B_2)\bigcap (A_2+\delta_2\left(A_2\right)B_2) = \varnothing$.
\end{enumerate}
and similar to (\ref{3.24-1}), we define
$$\tau^{(2)} \left(\mathcal{T}^{(2)}\right) = \inf\limits_{t\in S_{\mathcal{T}^{(2)}}}\sum\limits_{t\in A, A\in  \cup_{i\ge 0}\mathcal{B}_i}2^{\frac{\mathbf{n}(A)}{2}}\delta_2\left(A\right).$$

\begin{remark}
Note that the definition of $\mathcal{T}^{(2)}$ differs slightly from our previously defined iterative organized tree. Under the specific condition $p=2$, due to (\ref{case_p_2}), we may concretize the abstract components in the original definition the iterative organized tree. For example, instead of the inclusion relation $$B_{\mathbf{j}\left(A\right)}\left(2^{\mathbf{n}\left(A\right)}\right)\subset B_{\mathbf{j}\left(p\left(A\right)\right)}\left(2^{\mathbf{n}\left(p\left(A\right)\right)}\right),$$
we employ $$r^{\mathbf{j}(A)-\mathbf{j}(p(A))}\geq 2^{\frac{\mathbf{n}(A)-\mathbf{n}(p(A))+1}{2}}.$$
Consequently, there is no essential difference between the structure of $\mathcal{T}^{(2)}$ and that of the iterative organized tree.
\end{remark}

\begin{theorem}\label{them 3.3}
We have
 $$\sup \{\tau^{(2)} \left(\mathcal{T}^{(2)}\right); \mathcal{T}^{(2)} \subset T\}\lesssim_r\textsf{E} \sup _{t \in T} X_t.$$  
 
\end{theorem}

\begin{proof}
Note that
$$r^{\mathbf{j}(A)-\mathbf{j}(p(A))}\geq 2^{\frac{\mathbf{n}(A)-\mathbf{n}(p(A))+1}{2}}$$ implies 
$$
B_{\mathbf{j}\left(A\right)}\left(2^{\mathbf{n}\left(A\right)}\right)\subset r^{-\mathbf{j}(A)}2^{\frac{\mathbf{n}(A)+1}{2}} \subset r^{-\mathbf{j}(p(A))}2^{\frac{\mathbf{n}(p(A))}{2}} \subset B_{\mathbf{j}\left(p\left(A\right)\right)}\left(2^{\mathbf{n}\left(p\left(A\right)\right)}\right).
$$
$B\left(2^n\right)\subset 2^{\frac{n+1}{2}}B_2$ and $ (A_1+\delta_2\left(A_1\right)B_2)\bigcap (A_2+\delta_2\left(A_2\right)B_2) = \varnothing$ imply $$\left(\sqrt{2}A_1+B_{\mathbf{j}\left(A_1\right)}\left(2^{\mathbf{n}\left(A_1\right)}\right)\right)\bigcap \left(\sqrt{2}A_2+B_{\mathbf{j}\left(A_2\right)}\left(2^{\mathbf{n}\left(A_2\right)}\right)\right) = \varnothing.$$ 
Thus any tree $\mathcal{T}^{(2)}$ is a parameterized separation tree of $\sqrt{2}T$. So we have 
\begin{align*}
\sup \{\tau^{(2)} \left(\mathcal{T}^{(2)}\right); \mathcal{T}^{(2)} \subset T\}&\leq \sup \{\tau \left(\mathcal{T}\right); \mathcal{T} \text{ parameterized separation tree }\subset \sqrt{2}T\}
\\
&\lesssim_r \textsf{E} \sup _{t \in \sqrt{2}T} X_t = \sqrt{2}\textsf{E} \sup _{t \in T} X_t.
\end{align*}
\end{proof}

   We can define iterative organized trees $\Gamma^{(2)} _{\left(\mathbf{n},\mathbf{j},J\right)}\left(T\right)$ in this case. In this definition, we need to replace $B_{\mathbf{j}\left(t\right)}\left(2^{n_m^{\left(k+1\right)}}\right)\subset B_{\mathbf{j}\left(A_m\right)}\left(2^{\mathbf{n}\left(A_m\right)}\right)$ with $r^{\mathbf{j}(t)-\mathbf{j}(A_m)}\geq 2^{\frac{n_m^{\left(k+1\right)}-\mathbf{n}\left(A_m\right)+1}{2}}$. For brevity, we refrain from providing explicit definitions herein.
   
   As before, we may define the following. 
\begin{align*}
    &\rho^{(2)}\left(\Gamma^{(2)} _{\left(\mathbf{n},\mathbf{j},J\right)}\left(T\right)\right)=\inf _{t \in S_{\Gamma^{(2)} _{\left(\mathbf{n},\mathbf{j},J\right)}\left(T\right)}}\sum_{t\in A}\log \left(|c\left(p\left(A\right)\right)|\right)r^{-\mathbf{j}\left(A\right)} ;\\
&F^{(2)}_{n,j}\left(T\right)=\sup \{\rho\left(\Gamma^{(2)} _{\left(\mathbf{n},\mathbf{j},J\right)}\left(T\right)\right); n_{0}^{\left(0\right)}\geq n, J=j+1+\frac{1}{\kappa}(n_{0}^{\left(0\right)}-n)\} \text{ for } j\geq \tilde{j}_0;\\
&Size^{(2)}\left(T\right)=F^{(2)}_{0,\tilde{j}_0}\left(T\right)=\sup \{\rho\left(\Gamma^{(2)} _{\left(\mathbf{n},\mathbf{j},J\right)}\left(T\right)\right); n_{0}^{\left(0\right)}\geq n, J\geq \tilde{j}_0\}.
\end{align*}

\begin{theorem}\label{them 3.4}
    We have
    $$Size^{(2)}\left(T\right)\gtrsim_r\textsf{E} \sup _{t \in T} X_t.$$
\end{theorem}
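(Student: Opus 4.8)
The plan is to mirror the proof of Theorem \ref{them 2.5}, now specialized to the concrete Euclidean geometry furnished by (\ref{case_p_2}). Three ingredients are needed: (i) the functionals $\{F^{(2)}_{n,j}\}$ satisfy the growth condition for $(\bar{\varphi}_j)_{j\in\mathbb{Z}}$; (ii) feeding this into Theorem \ref{lower bound} and then applying the chaining estimate of Theorem \ref{them 2.4} together with the moment bound (\ref{| |< B}), exactly as in Proposition \ref{prop 2}, yields $\textsf{E}\sup_{t\in T}X_t\lesssim_r r\,Size^{(2)}(T)+r^{-\tilde{j}_0}$; and (iii) the boundary term $r^{-\tilde{j}_0}$ is itself controlled by $Size^{(2)}(T)$. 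Extending $F^{(2)}_{n,j}$ to $j<\tilde{j}_0$ by the convention $F^{(2)}_{n,j}:=F^{(2)}_{0,\tilde{j}_0}(T)=Size^{(2)}(T)$ as in (\ref{nf}), the structural inequalities $F^{(2)}_{n,j}\ge\max\{F^{(2)}_{n+\kappa,j+1},F^{(2)}_{n,j+1}\}$ and $F^{(2)}_{n,j}\le F^{(2)}_{0,\tilde{j}_0}$ are immediate from the definition, so only the growth inequality has genuine content.

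For (i) I would rerun the argument of Proposition \ref{prop 3} in this setting. Fix $\rho\le 1/4$ from Lemma \ref{lemma 2.3}, so that $B(2^{\kappa+n})\subset\rho r B(2^n)$, and suppose $(t_i)_{i\le N_n}$ satisfy $\bar{\varphi}_{j+1}(t_i,t_{i'})>2^n$ for $i\ne i'$, lie in a common dilate $C_2 B^{\bar{\varphi}}_j(u,2^n)$, and $H_i\subset B^{\bar{\varphi}}_{j+2}(t_i,2^{n+\kappa})$. Then $H_i\subset t_i+r^{-j-1}B(2^n)\subset t_i+r^{-j-1}2^{(n+1)/2}B_2$ by (\ref{case_p_2}), so one may form the tree $\mathcal{T}^{(2)}$ with $\mathcal{B}_{-1}=\bigcup_i H_i$, $\mathcal{B}_0=\{H_i\}_{i\le N_n}$, $\mathbf{n}(H_i)=n$ and $\mathbf{j}(H_i)\le j+1$; the Euclidean disjointness required by condition (6) of $\mathcal{T}^{(2)}$ is precisely the translation of the $\bar{\varphi}_{j+1}$-separation of the $t_i$, since $2^{n/2}B_2\subset B(2^n)$ forces $\|t_i-t_{i'}\|_2$ to exceed the sum of the radii of the $\delta_2$-neighborhoods (up to a harmless constant absorbed by shrinking $\rho$). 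Grafting onto each $H_i$ a near-optimal iterative organized subtree $\Gamma^{(2)}$ realizing $F^{(2)}_{n+\kappa,j+2}(H_i)-\epsilon$, and checking that the one-step nesting condition (4), $r^{\mathbf{j}(A)-\mathbf{j}(p(A))}\ge 2^{(\mathbf{n}(A)-\mathbf{n}(p(A))+1)/2}$, propagates — a descendant whose $\mathbf{n}$ lies in $[n+\kappa t,\,n+\kappa(t+1))$ falls inside a $B_2$-ball contracted by the factor $(\rho r)^{t+1}$, which dominates the single-step requirement once $\rho r$ is small — produces a tree of $\tau^{(2)}$-value at least $2^n r^{-j-1}+\min_{i\le N_n}F^{(2)}_{n+\kappa,j+2}(H_i)-\epsilon$. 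Since $\epsilon>0$ is arbitrary, this gives the growth condition with $C(r)\sim 1/r$.

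Granting (i), Theorem \ref{lower bound} applied with $\psi=\bar{\varphi}$ gives $\gamma_{\bar{\varphi}}(T)\lesssim_r F^{(2)}_{0,\tilde{j}_0-1}(T)+r^{-\tilde{j}_0(T)}\lesssim_r Size^{(2)}(T)+r^{-\tilde{j}_0}$. Taking an admissible sequence $(\mathcal{A}_n)$ with associated $j_n(A)$ nearly attaining $\gamma_{\bar{\varphi}}(T)$, any $s,t$ in a common $A\in\mathcal{A}_n$ satisfy $8(s/4-t/4)\in 8r^{-j_n(A)}B(2^n)$, so (\ref{| |< B}) yields $\|X_s-X_t\|_{2^n}\lesssim 2^n r^{-j_n(A)}$, i.e. the $\delta_n$-diameter of $A_n(t)$ on $T/4$ is $\lesssim 2^n r^{-j_n(A_n(t))}$. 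Since the process is symmetric and $\textsf{E}\sup_{t\in T/4}X_t=\tfrac14\textsf{E}\sup_{t\in T}X_t$, Theorem \ref{them 2.4} applied to $T/4$ gives $\textsf{E}\sup_{t\in T}X_t\lesssim\sup_{t}\sum_{n\ge 0}\Delta_n(A_n(t))\lesssim_r\gamma_{\bar{\varphi}}(T)\lesssim_r r\,Size^{(2)}(T)+r^{-\tilde{j}_0}$, exactly as in Proposition \ref{prop 2}. Finally, for (iii), the argument of Theorem \ref{them 2.5} applies verbatim: choosing $n_0^{(0)}=0$ forces the shallow generations of the tree to have $\mathbf{j}\le\tilde{j}_0$, producing a tree whose $\rho^{(2)}$-value is $\gtrsim_r r^{-\tilde{j}_0}$, whence $r^{-\tilde{j}_0}\lesssim_r Size^{(2)}(T)$. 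Combining the last two estimates yields $\textsf{E}\sup_{t\in T}X_t\lesssim_r Size^{(2)}(T)$, which is the claim.

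I expect the growth-condition step (i) to be the main obstacle. Its subtlety is that conditions (4) and (6) of $\mathcal{T}^{(2)}$ are concrete one-step statements replacing the automatically transitive inclusions $B_{\mathbf{j}(A)}(2^{\mathbf{n}(A)})\subset B_{\mathbf{j}(p(A))}(2^{\mathbf{n}(p(A))})$ used in Proposition \ref{prop 3}; one must verify both that the $\bar{\varphi}_{j+1}$-separation of the $t_i$ genuinely forces the Euclidean disjointness of their $\delta_2$-neighborhoods, and that grafting optimal subtrees never violates (4). The telescoping estimate driven by the contraction factor $\rho r$ from Lemma \ref{lemma 2.3} is exactly what bridges the one-step and transitive formulations, and getting the bookkeeping of the exponents right there is the crux. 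Everything else is a transcription of arguments already established in Sections 3 and 4.
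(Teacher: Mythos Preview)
Your proposal is correct and mirrors the paper's proof: verify the growth condition for $\{F^{(2)}_{n,j}\}$ by adapting Proposition~\ref{prop 3}, feed it into Theorem~\ref{lower bound} and Proposition~\ref{prop 2}, then absorb $r^{-\tilde{j}_0}$ as in Theorem~\ref{them 2.5}. The one place you work harder than necessary is the verification of condition~(4) at the grafting step. You route through the $(\rho r)$-contraction of Lemma~\ref{lemma 2.3} and translate back into Euclidean radii, but the paper observes that since condition~(4) is now a purely numerical inequality, one can check it directly from $\mathbf{j}(A)\ge j+3+\tfrac{1}{\kappa}(\mathbf{n}(A)-n-\kappa)$ and $r=2^{\kappa-3}$: this gives $r^{\mathbf{j}(A)-(j+1)}\ge 2^{(\kappa-3)(\mathbf{n}(A)-n+1)/\kappa}\ge 2^{(\mathbf{n}(A)-n+1)/2}$ for $\kappa\ge 6$, with no appeal to Lemma~\ref{lemma 2.3} or tracking of $\rho$. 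That arithmetic shortcut is precisely the payoff of having replaced the abstract inclusion $B_{\mathbf{j}(A)}(2^{\mathbf{n}(A)})\subset B_{\mathbf{j}(p(A))}(2^{\mathbf{n}(p(A))})$ by the concrete exponent condition. (Minor slip: your phrase ``once $\rho r$ is small'' should read ``once $\rho$ is small'' --- the Proposition~\ref{prop 3} argument only uses $\rho\le 1$.)
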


\begin{proof}
    As in the proof of Theorem \ref{them 2.5}, we need to prove that $F^{(2)}_{n, j}$ satisfies the growth condition under conditions (\ref{1}) and (\ref{2}) (see Definition \ref{growth conditon}). This proof is almost the same as the proof of Proposition \ref{prop 3}. The only difference from the proof of Proposition \ref{prop 3} is that we no longer need to show
\[
B_{\mathbf{j}(A)}\!\left(2^{\mathbf{n}(A)}\right)\subset B_{j+1}\!\left(2^{n}\right)
\quad \text{for all } A\in\mathcal{A}_1.
\]
Instead, it suffices to verify that
\[
r^{\mathbf{j}(A)-(j+1)} \ge 2^{\frac{\mathbf{n}(A)-n+1}{2}} .
\]
 By $\mathbf{j}(A)\geq j+3+\frac{1}{\kappa}(\mathbf{n}(A)-n-\kappa)$ and $r= 2^{\kappa-3}$, we have $$r^{\mathbf{j}(A)-(j+1)}\geq 2^\frac{(\kappa-3)(\mathbf{n}(A)-n+1)}{\kappa}\geq 2^{\frac{\mathbf{n}(A)-n+1}{2}}\text{ for }\kappa\geq 6.$$
\end{proof}

\begin{theorem}\label{them 3.5}
One has
    $$ \sup \{\tau^{(2)} \left(\mathcal{T}^{(2)}\right); \mathcal{T}^{(2)} \subset T\}\sim_r\textsf{E} \sup _{t \in T} X_t.$$
\end{theorem}

\begin{proof}
    Firstly, the second inequality is Theorem \ref{them 3.3}. Furthermore, by $\sqrt{2^n}B_2\subset B\left(2^n\right)$, we have $$\left(A_1+B_{\mathbf{j}\left(A_1\right)}\left(2^{\mathbf{n}\left(A_1\right)}\right)\right)\bigcap \left(A_2+B_{\mathbf{j}\left(A_2\right)}\left(2^{\mathbf{n}\left(A_2\right)}\right)\right) = \varnothing,$$ which implies $$ (A_1+\delta_2\left(A_1\right)B_2)\bigcap (A_2+\delta_2\left(A_2\right)B_2) = \varnothing.$$ Thus,  when $J\geq \tilde{j}_0$, we can say that any $\Gamma^{(2)} _{\left(\mathbf{n},\mathbf{j},J\right)}\left(T\right)$ is a $\mathcal{T}^{(2)}$ as in the proof of Theorem \ref{them 3.1}. So we have $$\sup \{\tau^{(2)} \left(\mathcal{T}^{(2)}\right); \mathcal{T}^{(2)} \subset T\} = C(r) Size^{(2)}(T)\gtrsim_r\textsf{E} \sup _{t \in T} X_t.$$
\end{proof}

\subsection{Case 2: $p=1$.}
In this case, for $|x|\ge 1$ we have
\[
|x| \le \hat U_i(x)=2|x|-1 \le x^2 .
\]
Then we have
\[
\hat U_i(x)\le x^2 \quad \text{and} \quad \hat U_i(x)\le 2|x|
\qquad \text{for all } x\in\mathbb{R}.
\]
Hence,
\[
\sum_{i\ge1} a_i^2 \le u \;\Rightarrow\; \sum_{i\ge1} \hat U_i(a_i)\le u,
\]
and
\[
\sum_{i\ge1} 2|a_i| \le u \;\Rightarrow\; \sum_{i\ge1} \hat U_i(a_i)\le u .
\]

Consequently, we have $\mathcal{N}_u\left(t\right) \geq \sqrt{u}\|t\|_2$ and $\mathcal{N}_u\left(t\right) \geq u\|t\|_{\infty} / 2$. Moreover, if $\sum_{i \geq 1} \hat{U}_i\left(a_i\right) \leq u$, writing $b_i=a_i \mathbf{1}_{\left\{\left|a_i\right| \geq 1\right\}}$ and $c_i=a_i \mathbf{1}_{\left\{\left|a_i\right|<1\right\}}$ we have $\sum_{i \geq 1}\left|b_i\right| \leq$ $u$ (since $\hat{U}_i\left(x\right) \geq|x|$ for $|x| \geq 1$) and $\sum_{i \geq 1} c_i^2 \leq u$ (since $\hat{U}_i\left(x\right) \geq x^2$ for $|x| \leq 1$). So we have
$$
\sum_{i \geq 1} t_i a_i=\sum_{i \geq 1} t_i b_i+\sum_{i \geq 1} t_i c_i \leq u\|t\|_{\infty}+\sqrt{u}\|t\|_2.
$$
And we have shown that
$$
\frac{1}{2} \max \left(u\|t\|_{\infty}, \sqrt{u}\|t\|_2\right) \leq \mathcal{N}_u\left(t\right) \leq\left(u\|t\|_{\infty}+\sqrt{u}\|t\|_2\right),
$$
which implies
$$
\frac{1}{2}\left(B_{\infty}\left(0,1\right) \cap B_2\left(0, \sqrt{u}\right)\right) \subset B\left(u\right) \subset 2\left(B_{\infty}\left(0,1\right) \cap B_2\left(0, \sqrt{u}\right)\right) .
$$

Now we can define a tree $\mathcal{T}^{(1)}$ (In this example, $r\geq 2^7$):
\begin{enumerate}
    \item $\mathbf{j}(A)=J$, $\mathbf{n}(A)=-1$ if $A$ is the largest element of $\mathcal{T}^{(1)}$.
    \item $\mathbf{n}\left(A\right)>\mathbf{n}\left(p\left(A\right)\right)$.
    \item $\mathbf{j}\left(A\right)\geq \Tilde{j}_0$ if $p(A)$ is the largest element.
    \item $r^{\mathbf{j}(A)-\mathbf{j}(p(A))}\geq 2^{\frac{\mathbf{n}(A)-\mathbf{n}(p(A))+4}{2}}$ if $p(A)$ is not the largest element.
    \item $\mathbf{j}\left(A\right)=\infty$ if $|A|<N_{\mathbf{n}\left(A\right)}$; $|c\left(p(A)\right)|=\min \{|A|,N_{\mathbf{n}\left(A\right)}\}$.
    \item $\delta_2\left(A\right)=2^{\frac{\mathbf{n}\left(A\right)}{2}}r^{-\mathbf{j}\left(A\right)}$;$\delta_\infty\left(A\right)=r^{-\mathbf{j}\left(A\right)}$
    \item If $A_1, A_2 \in c\left(A\right)$, then 
    $$\left(A_1+r^{-\mathbf{j}(A_1)}\left(B_{\infty} \cap 2^{\frac{\mathbf{n}(A_1)}{2}}B_2\right)\right)\bigcap \left(A_2+r^{-\mathbf{j}(A_2)}\left(B_{\infty} \cap 2^{\frac{\mathbf{n}(A_2)}{2}}B_2\right)\right) = \varnothing. $$

\end{enumerate}
Similarly to (\ref{3.24-1}), we define  $$\tau^{(1)} \left(\mathcal{T}^{(1)}\right) = \inf\limits_{t\in S_{\mathcal{T}^{(1)}}}\sum\limits_{t\in A, A\in  \cup_{i\ge 0}\mathcal{B}_i}2^{\frac{\mathbf{n}(A)}{2}}\delta_2\left(A\right) =\inf\limits_{t\in S_{\mathcal{T}^{(1)}}}\sum\limits_{t\in A, A\in  \cup_{i\ge 0}\mathcal{B}_i} 2^{\mathbf{n}(A)}\delta_{\infty}\left(A\right).$$

\begin{theorem}\label{them 5.4}
We have
        $$\textsf{E} \sup _{t \in T} X_t\leq \sup \{\tau^{(1)} \left(\mathcal{T}^{(1)}\right); \mathcal{T}^{(1)} \subset T\}\sim_r \textsf{E} \sup _{t \in T} X_t.$$
\end{theorem}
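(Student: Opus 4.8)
The plan is to mirror the treatment of the case $p=2$ (Theorems~\ref{them 3.3}--\ref{them 3.5}) step by step, the only genuinely new feature being that the ball $B(u)$ is now pinched between an $\ell_\infty$-ball and a rescaled $\ell_2$-ball,
\[
\tfrac12\bigl(B_{\infty}(0,1)\cap B_2(0,\sqrt u)\bigr)\subset B(u)\subset 2\bigl(B_{\infty}(0,1)\cap B_2(0,\sqrt u)\bigr),
\]
so that two radii, $\delta_\infty$ and $\delta_2$, have to be carried along the tree simultaneously. For the central inequality $\sup\{\tau^{(1)}(\mathcal{T}^{(1)})\}\lesssim_r\textsf{E}\sup_{t\in T}X_t$ I would show, exactly as in Theorem~\ref{them 3.3}, that after rescaling the index set by an absolute constant every $\mathcal{T}^{(1)}$ is a parameterized separation tree: condition~(4), $r^{\mathbf{j}(A)-\mathbf{j}(p(A))}\geq 2^{(\mathbf{n}(A)-\mathbf{n}(p(A))+4)/2}$, is chosen precisely so that, after enclosing $B_{\mathbf{j}(A)}(2^{\mathbf{n}(A)})$ in $2r^{-\mathbf{j}(A)}(B_\infty\cap 2^{\mathbf{n}(A)/2}B_2)$ and using $\tfrac12 r^{-\mathbf{j}(p(A))}(B_\infty\cap 2^{\mathbf{n}(p(A))/2}B_2)\subset B_{\mathbf{j}(p(A))}(2^{\mathbf{n}(p(A))})$, the $+4$ in the exponent absorbs the factors $2$ and $\tfrac12$ on both the $\ell_\infty$ and the $\ell_2$ coordinate and delivers $B_{\mathbf{j}(A)}(2^{\mathbf{n}(A)})\subset B_{\mathbf{j}(p(A))}(2^{\mathbf{n}(p(A))})$; and the disjointness~(7), after the same rescaling, gives the Minkowski-sum disjointness required of a parameterized separation tree. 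Theorem~\ref{dmr} then yields $\sup\{\tau^{(1)}(\mathcal{T}^{(1)})\}\lesssim_r\textsf{E}\sup_{t\in cT}X_t=c\,\textsf{E}\sup_{t\in T}X_t$.

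For the reverse direction I would repeat Theorems~\ref{them 3.4} and~\ref{them 2.5}. First introduce iterative organized trees $\Gamma^{(1)}_{(\mathbf{n},\mathbf{j},J)}(T)$ in which the abstract nesting $B_{\mathbf{j}(t)}(2^{n})\subset B_{\mathbf{j}(A_m)}(2^{\mathbf{n}(A_m)})$ of the general construction is replaced by $r^{\mathbf{j}(t)-\mathbf{j}(A_m)}\geq 2^{(n-\mathbf{n}(A_m)+4)/2}$, and set
\[
F^{(1)}_{n,j}(T)=\sup\Bigl\{\rho\bigl(\Gamma^{(1)}_{(\mathbf{n},\mathbf{j},J)}(T)\bigr):n^{(0)}_0\geq n,\ J=j+1+\tfrac1\kappa\bigl(n^{(0)}_0-n\bigr)\Bigr\},\qquad Size^{(1)}(T)=F^{(1)}_{0,\tilde j_0}(T).
\]
The crucial point is that $\{F^{(1)}_{n,j}\}$ satisfies the growth condition of Definition~\ref{growth conditon} under~\eqref{1} and~\eqref{2}; the argument is that of Proposition~\ref{prop 3}, with Lemma~\ref{lemma 2.3} ($B(8ru)\subset\rho r B(u)$) used to push the small sets $H_\ell$ into $t_\ell+r^{-j-1}B(2^n)$, the one new computation being the verification of $r^{\mathbf{j}(A)-(j+1)}\geq 2^{(\mathbf{n}(A)-n+4)/2}$ for the first generation $A\in\mathcal{B}_1$ of the merged tree, which follows from $\mathbf{j}(A)\geq j+3+\tfrac1\kappa(\mathbf{n}(A)-n-\kappa)$ and $r=2^{\kappa-3}$ once $\kappa\geq 6$ (and $\kappa\geq 10$, so that $r\geq 2^7$ as required by $\mathcal{T}^{(1)}$). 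With the growth condition established, Theorem~\ref{lower bound}, Theorem~\ref{them 2.4} and Proposition~\ref{prop 2} give $\textsf{E}\sup_{t\in T}X_t\lesssim_r Size^{(1)}(T)$; and since $B_\infty\cap 2^{\mathbf{n}(A)/2}B_2\subset 2B(2^{\mathbf{n}(A)})$, every $\Gamma^{(1)}_{(\mathbf{n},\mathbf{j},J)}(T)$ with $J\geq\tilde j_0$ is a tree of type $\mathcal{T}^{(1)}$, so $Size^{(1)}(T)\sim_r\sup\{\tau^{(1)}(\mathcal{T}^{(1)})\}$ and $\textsf{E}\sup_{t\in T}X_t\lesssim_r\sup\{\tau^{(1)}(\mathcal{T}^{(1)})\}$.

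Combining the two halves proves the $\sim_r$ in the statement. The leftmost inequality $\textsf{E}\sup_{t\in T}X_t\leq\sup\{\tau^{(1)}(\mathcal{T}^{(1)})\}$ --- a lower bound with constant one --- would be obtained from the lower-bound half together with a sharpened accounting of the constants specific to $p=1$, in the spirit of the normalization $n^{(0)}_0=0$, $\mathbf{j}(t)\leq\tilde j_0$ used at the end of the proof of Theorem~\ref{them 2.5}; alternatively it can be read off a direct minoration construction producing a single $\mathcal{T}^{(1)}$ whose value recovers $\textsf{E}\sup_{t\in T}X_t$.

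The step I expect to be the main obstacle is the growth-condition verification for $F^{(1)}_{n,j}$. In contrast with the $\ell_2$ case, each $H_\ell$ must be controlled simultaneously by an $\ell_\infty$-ball of radius $\sim r^{-j}$ and an $\ell_2$-ball of radius $\sim 2^{n/2}r^{-j}$, so conditions~(6)--(7) of $\mathcal{T}^{(1)}$ have to be propagated through every generation in both metrics at once, and three competing constants --- the $+4$ in the branching exponent, the $2$ and $\tfrac12$ from the sandwich, and the $\rho r$ contraction of Lemma~\ref{lemma 2.3} --- must be tracked together so that $F_{n,j}\bigl(\bigcup_\ell H_\ell\bigr)\geq C(r)2^nr^{-j}+\min_\ell F_{n+\kappa,j+2}(H_\ell)$ closes with $C(r)\geq 1/Poly(r)$.
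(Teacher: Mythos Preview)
Your proposal is correct and follows essentially the same route as the paper: the upper bound by showing that every $\mathcal{T}^{(1)}$ becomes a parameterized separation tree of $2T$ via the sandwich $\tfrac12(B_\infty\cap\sqrt uB_2)\subset B(u)\subset 2(B_\infty\cap\sqrt uB_2)$ together with the $+4$ nesting exponent, and the lower bound by introducing iterative organized trees $\Gamma^{(1)}$ with the nesting replaced by $r^{\mathbf{j}(t)-\mathbf{j}(A_m)}\geq 2^{(n-\mathbf{n}(A_m)+4)/2}$, checking the growth condition as in Proposition~\ref{prop 3} (the exponent inequality $r^{\mathbf{j}(A)-(j+1)}\geq 2^{(\mathbf{n}(A)-n+4)/2}$ indeed needs $\kappa\geq 10$, as the paper also records), and finally converting $\Gamma^{(1)}$ on $\tfrac12 T$ into a $\mathcal{T}^{(1)}$. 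Your anticipated ``main obstacle'' is milder than you suggest: the growth condition is verified for the abstract family $(\bar\varphi_j)$ exactly as in Proposition~\ref{prop 3}, so the $\ell_\infty\cap\ell_2$ structure never has to be propagated through the generations --- it enters only in the algebraic nesting check and in the one-line conversion at the end; and, as in the paper, the leftmost constant-$1$ inequality is not actually argued beyond the $\lesssim_r$ bound.
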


\begin{proof}
    The proof is almost the same as the proof in Theorem \ref{them 3.5}. Here we do not provide specific details, but only point out a few steps that correspond to the previous proof, but different.
   
     Firstly, 
        $$r^{\mathbf{j}(A)-\mathbf{j}(p(A))}\geq 2^{\frac{\mathbf{n}(A)-\mathbf{n}(p(A))+4}{2}}$$ implies 
        $$
            \mathbf{j}(A)>\mathbf{j}(p(A))\text{ and }r^{-\mathbf{j}(p(A))}2^{\frac{\mathbf{n}(p(A))}{2}}\geq 4r^{-\mathbf{j}(A)}2^{\frac{\mathbf{n}(A)}{2}}.
        $$
        Thus
        \begin{align*}
            4r^{-\mathbf{j}(A)}B_{\infty}\left(0,1\right)\subset r^{-\mathbf{j}(p(A))}B_{\infty}\left(0,1\right);\\
            4r^{-\mathbf{j}(A)}2^{\frac{\mathbf{n}(A)}{2}}B_2\subset r^{-\mathbf{j}(p(A))}2^{\frac{\mathbf{n}(p(A))}{2}}B_2.
        \end{align*}
        Then, we have
        \begin{align*}
        B_{\mathbf{j}\left(A\right)}\left(2^{\mathbf{n}\left(A\right)}\right)\subset 2r^{-\mathbf{j}(A)}\left(B_{\infty}\left(0,1\right) \cap B_2\left(0, \sqrt{2^{\mathbf{n}(A)}}\right)\right)\\
        \subset \frac{1}{2}r^{-\mathbf{j}(p(A))}\left(B_{\infty}\left(0,1\right) \cap B_2\left(0, \sqrt{2^{\mathbf{n}(p(A))}}\right)\right)\subset B_{\mathbf{j}\left(p\left(A\right)\right)}\left(2^{\mathbf{n}\left(p\left(A\right)\right)}\right). 
        \end{align*}
Additionally, $$\left(A_1+r^{-\mathbf{j}(A_1)}\left(B_{\infty} \cap 2^{\frac{\mathbf{n}(A_1)}{2}}B_2\right)\right)\bigcap \left(A_2+r^{-\mathbf{j}(A_2)}\left(B_{\infty} \cap 2^{\frac{\mathbf{n}(A_2)}{2}}B_2\right)\right) =\varnothing$$ implies
        $$
        \left(2A_1+B_{\mathbf{j}\left(A_1\right)}\left(2^{\mathbf{n}\left(A_1\right)}\right)\right)\bigcap \left(2A_2+B_{\mathbf{j}\left(A_2\right)}\left(2^{\mathbf{n}\left(A_2\right)}\right)\right) =\varnothing.
        $$
We obtain 
$$ \sup \{\tau^{(1)} \left(\mathcal{T}^{(1)}\right); \mathcal{T}^{(1)} \subset T\}\lesssim_r \sup _{t \in T} X_t.$$

Furthermore, we can define a new tree with $r^{\mathbf{j}(t)-\mathbf{j}(A_m)}\geq 2^{\frac{n_m^{\left(k+1\right)}-\mathbf{n}(A_m)+4}{2}}$. 
We need to prove $r^{\mathbf{j}(A)-(j+1)}\geq 2^{\frac{\mathbf{n}(A)-n+4}{2}}$ by $\mathbf{j}(A)\geq j+3+\frac{1}{\kappa}(\mathbf{n}(A)-n-\kappa)$ and $\mathbf{n}(A)\geq \kappa$.   It is easy to see that
        \begin{align*}
            r^{\mathbf{j}(A)-(j+1)}\geq 2^\frac{(\kappa-3)(\mathbf{n}(A)-n+1)}{\kappa}\geq 2^{\frac{\mathbf{n}(A)-n+4}{2}} \text{ for } \kappa\geq 10.
        \end{align*}
Thus, we only need to assume that $r$ is sufficiently large.
In fact, 
        $$\left(\frac{1}{2}A_1+B_{\mathbf{j}\left(A_1\right)}\left(2^{\mathbf{n}\left(A_1\right)}\right)\right)\bigcap \left(\frac{1}{2}A_2+B_{\mathbf{j}\left(A_2\right)}\left(2^{\mathbf{n}\left(A_2\right)}\right)\right) = \varnothing$$ implies
       $$\left(A_1+r^{-\mathbf{j}(A_1)}\left(B_{\infty} \cap 2^{\frac{\mathbf{n}(A_1)}{2}}B_2\right)\right)\bigcap \left(A_2+r^{-\mathbf{j}(A_2)}\left(B_{\infty} \cap 2^{\frac{\mathbf{n}(A_2)}{2}}B_2\right)\right) = \varnothing. $$
        And we can consider the new trees in $\frac{1}{2}T$.
   
 Thus we obtain $$\textsf{E} \sup _{t \in T} X_t\lesssim_r \sup \{\tau^{(1)} \left(\mathcal{T}^{(1)}\right); \mathcal{T}^{(1)} \subset T\}.$$
\end{proof}

\begin{remark}
In fact, the two trees in the above examples can be described in an alternative form, whose structure depends only on the function $\mathbf{n}$.

For $p=2$, the new tree $\mathcal{T}^{(2)\prime}, (r\ge 8)$ can be described as:
\begin{enumerate}
    \item $\mathbf{j}(A)=J$, $\mathbf{n}(A)=-1$ if $A$ is the largest element of $\mathcal{T}^{(2)\prime}$.
    \item $\mathbf{n}\left(A\right)>\mathbf{n}\left(p\left(A\right)\right)$. 
    \item $|c\left(p(A)\right)|=\min \{|A|,N_{\mathbf{n}\left(A\right)}\}$.
    \item If $A_1 \in c\left(A\right)$ and $A$ is the largest element, then
    \[
    \mathbf{j}(A_1)=
    \begin{cases}
    +\infty &\text{ if } |c\left(p(A)\right)|<N_{\mathbf{n}\left(A\right)},\\
      \begin{aligned}
      \inf_{j\in \mathbb{Z}}\{&(A_1+\delta_2\left(A_1\right)B_2)\bigcap (A_2+\delta_2\left(A_2\right)B_2) = \varnothing \\
      &\text{ for }\forall A_2\ne A_1\in c(A) \text{ and } j\geq\Tilde{j}_0\}
      \end{aligned} &\text{ if } |c\left(p(A)\right)|=N_{\mathbf{n}\left(A\right)}.
    \end{cases}
    \]
    \item If $A_1 \in c\left(A\right)$ and $A$ is not the largest element, then
    \[
    \mathbf{j}(A_1)=
    \begin{cases}
    +\infty &\text{ if } |c\left(p(A)\right)|<N_{\mathbf{n}\left(A\right)},\\
      \begin{aligned}
      \inf_{j\in \mathbb{Z}}&\{(A_1+\delta_2\left(A_1\right)B_2)\bigcap (A_2+\delta_2\left(A_2\right)B_2) = \varnothing \\
      &\text{ for }\forall A_2\ne A_1\in c(A) \text{ and } r^{j}\geq r^{\mathbf{j}(A)}2^{\frac{\mathbf{n}(A_1)-\mathbf{n}(A)+1}{2}}\}
      \end{aligned} &\text{ if } |c\left(p(A)\right)|=N_{\mathbf{n}\left(A\right)}.
    \end{cases}
    \]
\end{enumerate}    
then, we can define $\tau^{(2)\prime} \left(\mathcal{T}^{(2)\prime}\right) = \inf\limits_{t\in S_{\mathcal{T}^{(2)\prime}}}\sum\limits_{t\in A\in \mathcal{T}^{(2)\prime}}2^{\frac{\mathbf{n}(A)}{2}}\delta_2\left(A\right)$.

Since $\mathcal{T}^{(2)\prime}$ and $\mathcal{T}^{(2)}$ have no essential differences, we have
    \begin{equation}\label{eq:equv_2}
     \sup \{\tau^{(2)\prime} \left(\mathcal{T}^{(2)\prime}\right); \mathcal{T}^{(2)\prime} \subset T\}\sim_r\textsf{E} \sup _{t \in T} X_t.
    \end{equation}

Similarly, we have a new tree $\mathcal{T}^{(1)\prime}$($r\geq 2^7$):
\begin{enumerate}
    \item $\mathbf{j}(A)=\infty$, $\mathbf{n}(A)=-1$ if $A$ is the largest element of $\mathcal{T}^{(2)\prime}$.
    \item $\mathbf{n}\left(A\right)>\mathbf{n}\left(p\left(A\right)\right)$. 
    \item $|c\left(p(A)\right)|=\min \{|A|,N_{\mathbf{n}\left(A\right)}\}$.
    \item If $A_1 \in c\left(A\right)$ and $A$ is the largest element, then
\[
\mathbf{j}(A_1)=
\begin{cases}
+\infty &\text{ if } |c\left(p(A)\right)|<N_{\mathbf{n}\left(A\right)},\\
\begin{aligned}
\inf_{j\in \mathbb{Z}}\{&\left(A_1+r^{-\mathbf{j}(A_1)}\left(B_{\infty} \cap 2^{\frac{\mathbf{n}(A_1)}{2}}B_2\right)\right)\bigcap\\& \left(A_2+r^{-\mathbf{j}(A_2)}\left(B_{\infty} \cap 2^{\frac{\mathbf{n}(A_2)}{2}}B_2\right)\right) = \varnothing\\
      & \text{ for }\forall A_2\ne A_1\in c(A) \text{ and } j\geq\Tilde{j}_0
\}
\end{aligned} &\text{ if } |c\left(p(A)\right)|=N_{\mathbf{n}\left(A\right)}.
\end{cases}
\]
    \item If $A_1 \in c\left(A\right)$ and $A$ is not the largest element, then
\[
\mathbf{j}(A_1)=
\begin{cases}
+\infty &\text{ if } |c\left(p(A)\right)|<N_{\mathbf{n}\left(A\right)},\\
\begin{aligned}
\inf_{j\in \mathbb{Z}}\{&\left(A_1+r^{-\mathbf{j}(A_1)}\left(B_{\infty} \cap 2^{\frac{\mathbf{n}(A_1)}{2}}B_2\right)\right)\bigcap\\& \left(A_2+r^{-\mathbf{j}(A_2)}\left(B_{\infty} \cap 2^{\frac{\mathbf{n}(A_2)}{2}}B_2\right)\right) = \varnothing\\
      & \text{ for }\forall A_2\ne A_1\in c(A) \text{ and } r^{j}\geq r^{\mathbf{j}(A)}2^{\frac{\mathbf{n}(A_1)-\mathbf{n}(A)+4}{2}}
\}
\end{aligned} &\text{ if } |c\left(p(A)\right)|=N_{\mathbf{n}\left(A\right)}.
\end{cases}
\]
\end{enumerate}
Similarly, we have$$
\tau^{(1)\prime} \left(\mathcal{T}^{(1)\prime}\right) = \inf\limits_{t\in S_{\mathcal{T}^{(1)\prime}}}\sum\limits_{t\in A\in \mathcal{T}^{(1)\prime}}2^{\frac{\mathbf{n}(A)}{2}}\delta_2\left(A\right) =\inf\limits_{t\in S_{\mathcal{T}^{(1)\prime}}}\sum\limits_{t\in A\in \mathcal{T}^{(1)\prime}} 2^{\mathbf{n}(A)}\delta_{\infty}\left(A\right)
$$
and\begin{equation}\label{eq:equv_1}
   \sup \{\tau^{(1)\prime} \left(\mathcal{T}^{(1)\prime}\right); \mathcal{T}^{(1)\prime} \subset T\}\sim_r\textsf{E} \sup _{t \in T} X_t. 
\end{equation}

The relations \eqref{eq:equv_2} and \eqref{eq:equv_1} are easily reached through $\mathcal{T}^{(1)}$ and $\mathcal{T}^{(2)}$, hence we do not provide proof.

\end{remark}

\section{Application}

In this section, we present a deterministic algorithm for computing an approximation to $\textsf{E} \sup\limits_{t\in T}X_t$ as an application of our main result.  We assume that $|T|<\infty$. Thus, we can let $N(T)=|T|$ and $n(T)$ be the smallest numbers such that $|T|\leq N_{n(T)}$. We can also let $j(T)$ be the largest number such that $\varphi_{j(T)}(x_i,x_{i^\prime})>2^{n(T)}$ for $\forall x_i\neq x_{i^\prime}\in T$. Obviously $j(T)$ can be computed. Then we will get a deterministic algorithm for computing an approximation to $\textsf{E} \sup\limits_{t\in T}X_t$ in time poly($N(T)$).

To present our result, we introduce the growth condition for points, which plays an essential role in the proof. Before that, we need to give some notions.

Put
$$
k_n(A)=\sup\limits_{|S|\leq N_{n+\kappa}}\sup\{j\in \mathbb{Z}: A\subset \bigcup\limits_{x\in S}B_j(x,2^{n})\},
$$
$$
j_n(A)=\sup\{j\in \mathbb{Z}: \varphi_j(x,y)\leq 2^n \text{ for } \forall x,y \in A\},
$$
$$
\gamma(T)=\inf\limits_{\mathcal{A}}\sup\limits_{t\in T}\sum\limits_{n\geq 0}2^nr^{-j_n\left(A_n\left(t\right)\right)},
$$
where $\mathcal{A}$ means any admissible sequence, $\kappa>0$.

\begin{remark}
    Obviously, $k_n(A)\geq j_n(A)$ for any $n$ and $A$.
\end{remark}

As we did in Lemma \ref{gc lemma 1}, we can prove the following lemma.
\begin{lemma}\label{lemma for point}
We have $k_n(A)=+\infty$ or for $\forall \delta\geq 1$ there exist $(x_i)_{i\leq N_{n+\kappa}} \in A$ such that 
$$
\varphi_{k_n(A)+\delta}(x_i,x_j)>2^{n} \text{ for } \forall i\neq j.
$$
\end{lemma}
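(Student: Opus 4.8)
The plan is to transcribe the greedy-extraction argument from the proof of Lemma~\ref{gc lemma 1}, replacing the abstract family $(\psi_j)_{j\in\mathbb{Z}}$ by $(\varphi_j)_{j\in\mathbb{Z}}$ and the threshold $N_n$ by $N_{n+\kappa}$; these are the only formal differences between the two statements, and $(\varphi_j)_{j\in\mathbb{Z}}$ possesses exactly the features the argument relies on, namely symmetry and the identity $B_j(x,2^n)=\{s\in T:\varphi_j(x,s)\le 2^n\}$. One first reduces to the case $k_n(A)<+\infty$, since otherwise the first alternative of the lemma already holds. It is worth observing at this point that $k_n(A)<+\infty$ forces $|A|>N_{n+\kappa}$: if $|A|\le N_{n+\kappa}$, one may take the covering set $S=A$, and then $A\subset\bigcup_{x\in S}B_j(x,2^n)$ holds for every $j\in\mathbb{Z}$, so $k_n(A)=+\infty$. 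Hence there are always enough points in $A$ to perform the selection below.

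Fix $\delta\ge 1$ and write $k:=k_n(A)$. The selection proceeds by induction on the number of points chosen. For the base step, pick any $x_1\in A$; if no $x_2\in A$ satisfied $\varphi_{k+\delta}(x_1,x_2)>2^n$, then $A\subset B_{k+\delta}(x_1,2^n)$ would be a covering of $A$ by the single ball centred at $x_1$, and since $|\{x_1\}|=1\le N_{n+\kappa}$ this would give $k_n(A)\ge k+\delta>k$, a contradiction. For the inductive step, suppose $x_1,\dots,x_m\in A$ with $m<N_{n+\kappa}$ have been chosen so that $\varphi_{k+\delta}(x_i,x_j)>2^n$ for all $i\neq j\le m$; if no $x_{m+1}\in A$ were $\varphi_{k+\delta}$-separated from all of $x_1,\dots,x_m$ by more than $2^n$, then every point of $A$ would lie in some $B_{k+\delta}(x_i,2^n)$, i.e.\ $A\subset\bigcup_{i\le m}B_{k+\delta}(x_i,2^n)$, a covering of $A$ by $m<N_{n+\kappa}$ balls, again forcing $k_n(A)\ge k+\delta$. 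Iterating until $m=N_{n+\kappa}$ produces the required $(x_i)_{i\le N_{n+\kappa}}\in A$; note that $\varphi_{k+\delta}(x_i,x_j)>2^n>0$ automatically makes these points distinct.

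No genuine obstacle arises: the lemma is a routine adaptation of Lemma~\ref{gc lemma 1}. The only point requiring some care is the bookkeeping of cardinalities --- the covering-set bound $|S|\le N_{n+\kappa}$ in the definition of $k_n(A)$ must be invoked precisely when the partial selection has size $m<N_{n+\kappa}$, which is exactly what allows the extraction to be run for $N_{n+\kappa}$ rounds and to deliver $N_{n+\kappa}$ separated points rather than the $N_n$ of Lemma~\ref{gc lemma 1}.
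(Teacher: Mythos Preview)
Your proposal is correct and follows exactly the approach the paper intends: the paper does not give a separate proof of this lemma but simply writes ``As we did in Lemma~\ref{gc lemma 1}, we can prove the following lemma,'' and your argument is precisely the transcription of that greedy-extraction proof with $N_n$ replaced by $N_{n+\kappa}$. Your additional remark that $k_n(A)<+\infty$ forces $|A|>N_{n+\kappa}$ is a nice clarification that the paper omits.
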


Thus, similarly, we have the following theorem.
\begin{theorem}[Contraction principle for points]\label{Contraction Principle for point}
Let $a\geq 0$ satisfy
$$
r^{-k_n(A)}\leq C_3\left( ar^{-j_n(A)}+\sup\limits_{x\in A}r^{-s_n(x)} \right)
$$
for all $n\geq 0$ and $A\subset T$. Then
$$
\gamma(T)\leq Poly(C_3,r) \left( a\gamma(T) + \sup\limits_{x\in T}\sum\limits_{n\geq 0}2^nr^{-s_n(x)} + r^{-j_0(T)}\right).
$$

\end{theorem}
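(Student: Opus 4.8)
The plan is to mimic, essentially line for line, the proof of Theorem~\ref{Contraction Principle}, with $\psi_j$ replaced by $\varphi_j$, the pair $(k^\psi_n,j^\psi_n)$ replaced by $(k_n,j_n)$, and the single structural change that $k_n$ is now built from $N_{n+\kappa}$ covering balls rather than $N_n$. This forces the index of the admissible sequence we construct to be shifted by $\kappa+2$ instead of $3$, and it is precisely this shift that produces the extra term $r^{-j_0(T)}$ in the conclusion: the first $\kappa+2$ levels of the new sequence are trivial and their cost has to be booked separately.

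First I would fix an arbitrary admissible sequence $(\mathcal A_n)_{n\ge 0}$ of $T$. Since $\varphi_j$ is nondecreasing in $j$ and $\varphi_{j_0(T)}\le 1$ on $T\times T$, we have $k_n(A)\ge j_0(T)$ and $j_n(A)\ge j_0(T)$ for every $n$ and $A$, so there is no loss in assuming $r^{-s_n(x)}\le r^{-j_0(T)}$ throughout. Exactly as in Theorem~\ref{Contraction Principle}, for each $n\ge 1$ and $A_n\in\mathcal A_n$ I split $A_n$ into the $\le n$ dyadic segments
$$
A_n^i=\{x\in A_n:2^{-2i}r^{-j_0(T)}<r^{-s_n(x)}\le 2^{-2(i-1)}r^{-j_0(T)}\}\ (1\le i<n),\quad A_n^n=\{x\in A_n:r^{-s_n(x)}\le 2^{-2(n-1)}r^{-j_0(T)}\}.
$$
By the definition of $k_n(A_n^i)$ the set $A_n^i$ is covered by at most $N_{n+\kappa}$ balls $B_{k_n(A_n^i)}(x,2^n)$; disjointifying them, I partition $A_n^i$ into at most $N_{n+\kappa}$ pieces $A_n^{ij}$, each inside one such ball. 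Since $B_{k_n(A_n^i)}(x,2^n)$, translated to the origin, lies in the symmetric convex set $r^{-k_n(A_n^i)}B(2^n)$ (which contains $0$), any two of its points differ by an element of $2r^{-k_n(A_n^i)}B(2^n)\subset r\cdot r^{-k_n(A_n^i)}B(2^n)=r^{-(k_n(A_n^i)-1)}B(2^n)$ (using $r\ge 4$ and convexity); hence $j_n(A_n^{ij})\ge k_n(A_n^i)-1$, and therefore $j_m(A_n^{ij})\ge k_n(A_n^i)-1$ for all $m\ge n$.

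Next I would let $\mathcal C$ be the common refinement of $\bigcup_{k=1}^n\{A_k^{ij}\}$. Since each $\{A_k^{ij}\}$ has at most $|\mathcal A_k|\cdot k\cdot N_{k+\kappa}\le kN_{k+\kappa}^2$ elements, one gets $|\mathcal C|\le\prod_{k=1}^n kN_{k+\kappa}^2<N_{n+\kappa+2}$, so putting $\mathcal C_{n+\kappa+2}=\mathcal C$ and $\mathcal C_m=\{T\}$ for $0\le m\le\kappa+2$ defines an admissible sequence. For $t\in T$ the atom $C_{n+\kappa+2}(t)$ lies in some $A_n^{ij}(t)$, so $j_{n+\kappa+2}(C_{n+\kappa+2}(t))\ge k_n(A_n^i(t))-1$, whence, invoking the hypothesis together with $j_n(A_n^i)\ge j_n(A_n)$ and the fact that $r^{-s_n(\cdot)}$ varies by a factor at most $4$ on $A_n^i$ (and is $\le 2^{-2(n-1)}r^{-j_0(T)}$ on $A_n^n$),
$$
2^{n+\kappa+2}r^{-j_{n+\kappa+2}(C_{n+\kappa+2}(t))}\le 2^{\kappa+2}r\,C_3\Big(a\,2^n r^{-j_n(A_n(t))}+4\cdot 2^n r^{-s_n(t)}+2^{-n+1}r^{-j_0(T)}\Big).
$$
Summing over $n\ge 0$, noting $\sum_n 2^{-n+1}r^{-j_0(T)}\lesssim r^{-j_0(T)}$ and that the first $\kappa+2$ trivial levels contribute at most $2^{\kappa+3}r^{-j_0(T)}$, and then taking $\sup_{t\in T}$, gives
$$
\gamma(T)\le\sup_{t\in T}\sum_{m\ge 0}2^m r^{-j_m(C_m(t))}\le Poly(C_3,r)\Big(a\sup_{t\in T}\sum_{n\ge 0}2^n r^{-j_n(A_n(t))}+\sup_{x\in T}\sum_{n\ge 0}2^n r^{-s_n(x)}+r^{-j_0(T)}\Big);
$$
since $(\mathcal A_n)$ was arbitrary and $\mathcal C$ was built from it, taking the infimum over admissible sequences on the right turns the first term into $a\gamma(T)$, which is the assertion.

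The work is bookkeeping rather than conceptual. The places that need care are: (i) getting the index shift right — $n+\kappa+2$ in place of $n+3$, because of the $N_{n+\kappa}$ in the definition of $k_n$ — and checking that the refined partition stays admissible with that shift; (ii) the doubling step $2B(2^n)\subset rB(2^n)$, which uses only $r\ge 4$ together with convexity of $B(2^n)$ and $0\in B(2^n)$, so (unlike elsewhere in the paper) no appeal to the $\Delta_2$-condition is needed here; and (iii) verifying that the constants coming from the shift, from doubling, and from the $2^{-2(n-1)}r^{-j_0(T)}$ remainder all collapse into a single polynomial in $C_3$ and $r$ with nonnegative coefficients. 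No idea beyond the proof of Theorem~\ref{Contraction Principle} is required; in particular Lemma~\ref{lemma for point} plays no role here (it is needed later, for the growth condition for points).
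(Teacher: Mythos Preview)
Your approach is exactly the paper's: refine an arbitrary admissible sequence by first slicing each $A_n$ into level sets $A_n^i$ of $r^{-s_n(\cdot)}$, then cover each $A_n^i$ by $N_{n+\kappa}$ balls at scale $k_n(A_n^i)$, take the common refinement, and bound the resulting $\gamma$-sum. The paper writes the shift as $n+3+\kappa$ and the cardinality as $\prod_{k=1}^n k\,(2^{2^k})(2^{2^{k+\kappa}})$; otherwise the two arguments are identical.

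There is one arithmetic slip in your cardinality estimate. You bound the number of pieces at level $k$ by $kN_{k+\kappa}^2$ (replacing $|\mathcal A_k|\le N_k$ by the looser $N_{k+\kappa}$) and then claim $\prod_{k=1}^n kN_{k+\kappa}^2<N_{n+\kappa+2}$. Writing $N_{k+\kappa}^2=2^{2^{k+\kappa+1}}$, the product equals $n!\cdot 2^{2^{n+\kappa+2}-2^{\kappa+2}}$, so the inequality is equivalent to $n!<2^{2^{\kappa+2}}$, which fails for $n$ large (e.g.\ already at $n=35$ when $\kappa=5$). Either keep the sharper factor $N_k N_{k+\kappa}$ as the paper does, or simply shift by $\kappa+3$ instead of $\kappa+2$; with shift $\kappa+3$ even your looser product is $<N_{n+\kappa+3}$ for all $n$, and nothing else in your argument changes. (Your remainder constant $2^{-n+1}$ should be $4\cdot 2^{-n}$, but that is harmless.)
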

\begin{proof}
Assume that $r^{-s_n(x)}\leq r^{-j_0(T)}$ for all $n\geq 0$ and $x\in T$. Let $(\mathcal{A}_n)$ be an admissible sequence of $T$. For every $n\geq 1$ and partition element $A_n\in \mathcal{A}_n$, we construct sets $A_n^{ij}$ as follows. We first partition $A_n$ into $n$ segments (here $1\leq i<n$):
$$
A_n^i=\{x\in A_n;2^{-2i}r^{-j_0(T)}<r^{-s_n(x)}\leq 2^{-2(i-1)}r^{-j_0(T)}\}.
$$
$$
A_n^n=\{x\in A_n;r^{-s_n(x)}\leq 2^{-2(n-1)}r^{-j_0(T)}\}.
$$
Using the assumption of the theorem, we can partition each $A_n^i$ into at most $N_{n+\kappa}$ pieces $A_n^{ij}$ such that 
$$
j_{n+3+\kappa}(A_n^{ij})\geq j_n(A_n^{ij})\geq k_n(A_n^{i})-1,
$$
$$
r^{-k_n(A_n^{ij})}\leq r^{-k_n(A_n^i)}\leq C_3\left(ar^{-j_n(A_n)}+r^{-s_n(x)}+2^{-(n-1)}r^{-j_0(T)}\right)
$$
for all $x\in A_n^{ij}$. Let $\mathcal{C}_{n+3+\kappa}$ be the partition generated by all sets $A_k^{ij},k\leq n,i,j$ thus constructed. Then $$|\mathcal{C}_{n+3+\kappa}|\leq\prod_{k=1}^{n}k(2^{2^k})(2^{2^{k+\kappa}})<N_{n+3+\kappa}.$$

Define $\mathcal{C}_k=\{T\}$ for $0\leq k \leq 3+\kappa,$ we obtain
\begin{align*}
    \gamma(T)&\leq \sup\limits_{x\in T} \sum_{n\geq 0} 2^nr^{-j_n(C_n(t))}\\
    &\leq Poly(C_3,r)\left( a\sup\limits_{t\in T}\sum\limits_{n\geq0}2^nr^{-A_n(t)} + \sup\limits_{x\in T}\sum_{n\geq 0}2^{n}r^{-s_n(x)} + r^{-j_0(T)}\right),
\end{align*}
where the universal constant only depends on $r$. 
\end{proof}

Now, we introduce the growth condition for the points.
\begin{definition}\label{growth condition point}
    We say that $f_{n,j}$ satisfy the $\sigma$-growth condition for points if the following occurs. Recall Lemma \ref{lemma 2.3},
    We can assume  
    $r=2^{\kappa-3}$, where $\kappa$ is a large enough integer such that \eqref{eq:lemma 2.3} holds for $\rho<\frac{1}{10}$. Moreover, we assume $\kappa\ge 5$.
    Let functions $f_{n, j}$ be maps from $T$ to $\mathbb{R}^{+}$ for $n \in \mathbb{N}, j \in \mathbb{Z}$, satisfying 
    $$f_{n,j}\geq \max\{f_{n+\kappa,j+1},f_{n,j+1}\}$$ 
    and 
    $$
    \begin{aligned}
    f_{n,j}=\sup\{f_{n,j} : n\geq 0,j\geq j_0(T)-\sigma-2\} \text{ for all } j\leq j_0(T)-\sigma-3 \text{ and } n\geq 0 
    \end{aligned}
    $$
    for some integer $\sigma\ge 0$. Consider any points $(t_i)_{i\leq N_{n+\kappa}} \in l_2$, and $j$ such that
    $$
    \begin{aligned}
    \varphi_{j+2} (x_i/8,x_{i^\prime}/8)>2^{n+\kappa} \text{ for } \forall i\neq i^\prime,   \\
    \exists u, \text{ s.t. } x_i \in B_{j-\sigma}(u,2^n) \text{ for } \forall i.
    \end{aligned}
    $$
    Then 
    $$
    f_{n,j-\sigma-1}(u)\geq 2^nr^{-j} + \min\limits_{i\leq N_{n+\kappa}}f_{n+\kappa,j+2}(t_i).
    $$
\end{definition}

Put
    $$
    K_n(t,x)=\inf_{s\in \mathbb{Z}}\{tr^{-s}+\sup_{u\in T,n,j}f_{n,j}(u)-\sup_{u\in B_s\left(x,2^n\right)}f_{n,s}\left(u\right)\},
    $$
and set
\begin{align*}
K_n(c_4a2^n,x)&\leq c_4a2^nr^{-s_n^a(x)} + \sup_{u\in T,n,j}f_{n,j}(u)-\sup_{u\in B_{s_n^{a}(x)}\left(x,2^n\right)}f_{n,s_n^a(x)}\left(u\right)\\
&\leq K_n(c_4a2^n,x) + 2^{-n}\sup_{u\in T,n,j}f_{n,j}(u).
\end{align*}

\begin{lemma}\label{interpolation for point}
We have
$$
\sup\limits_{t\in T}\sum_{n\geq 0}2^nr^{-s_n^a(x)}\lesssim \frac{1}{r}\left(\frac{\sup_{u\in T,n,j}f_{n,j}(u)}{a}+r^{-j_0(T)}\right).
$$
\end{lemma}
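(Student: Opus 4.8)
The plan is to reproduce the argument of Lemma~\ref{interpolation} almost verbatim, replacing the set-functional $F_{n,j}(\cdot)$ by the point-functional $B\mapsto\sup_{u\in B}f_{n,j}(u)$ and the constant $F_{0,j^\psi_0}(T)$ by $M:=\sup_{u\in T,\,n,\,j}f_{n,j}(u)$. First I would record two elementary facts: that $0\le K_n(t,x)\le M$ for every $t\ge 0$, $n\ge 0$ and $x\in T$ — the upper bound by letting $s\to+\infty$ in the infimum and the lower bound since $\sup_u f_{n,s}(u)\le M$ — and that one may assume $s_n^a(x)\ge j_0(T)-\sigma-2$. The latter holds because, for $s$ below the saturation threshold of Definition~\ref{growth condition point}, one has $f_{n,s}\equiv g$ for the saturated value $g:=\sup_{n\ge0,\,j\ge j_0(T)-\sigma-2}f_{n,j}$, while $B_s(x,2^n)=T$ (the metric $\varphi_s$ being so coarse at such low levels that $\sup_T f_{n,s}=M$), so the bracket defining $K_n$ collapses to $tr^{-s}$, which is decreasing in $s$; hence the infimum is never improved by descending below the threshold.

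The heart of the matter is the telescoping estimate
\[
2^{-n}M+K_{n+\kappa}(c_4a2^{n+\kappa},x)-K_n(c_4a2^n,x)\ \gtrsim\ ra\,2^{n}r^{-s_{n+\kappa}^{a}(x)},\qquad n\ge 1,\ x\in T.
\]
To establish it I would use the near-optimality of $s:=s_{n+\kappa}^a(x)$ (built into its definition, with slack $2^{-(n+\kappa)}M$) to bound $K_{n+\kappa}(c_4a2^{n+\kappa},x)$ from below by the value of its bracket at $s$, and $K_n(c_4a2^n,x)$ from above by the value of its bracket at $s-1$. Subtracting and using $2^\kappa-r=7r$ on the two geometric terms leaves the asserted main term plus
\[
\sup_{u\in B_{s-1}(x,2^n)}f_{n,s-1}(u)\ -\ \sup_{u\in B_{s}(x,2^{n+\kappa})}f_{n+\kappa,s}(u).
\]
If $s$ lies strictly above the saturation threshold this difference is nonnegative: Lemma~\ref{lemma 2.3}, applied with $u=2^n$ and $2^\kappa=8r$, gives $B_{s}(x,2^{n+\kappa})\subset\rho B_{s-1}(x,2^n)$, and then monotonicity of the supremum together with the pointwise bound $f_{n,s-1}\ge f_{n+\kappa,s}$ finish it. If $s$ equals the threshold, both suprema run over $T$, the first equals $M$ by saturation, so the difference is again nonnegative while the main term $c_4a(2^\kappa-r)2^nr^{-s}$ alone already dominates the right-hand side.

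Finally I would sum the displayed estimate over $n\ge 1$. Splitting the $K$-terms by the residue class of $n$ modulo $\kappa$ makes each class telescope with total at most $M$, so the whole $K$-contribution is at most $\kappa M$; together with $\sum_{n\ge1}2^{-n}M\le M$ this yields $ra\sum_{n\ge1}2^nr^{-s_{n+\kappa}^a(x)}\lesssim_\kappa M$ uniformly in $x$. Re-indexing with $m=n+\kappa$ controls $\sum_{m>\kappa}2^mr^{-s_m^a(x)}$, and the finitely many remaining terms $m\le\kappa$ are handled by $r^{-s_m^a(x)}\le r^{-j_0(T)}$; adding the two contributions gives the claimed bound.

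I expect the only real difficulty to be bookkeeping: one must keep the three index shifts consistent — $n\mapsto n+\kappa$ in $K_n$, $j\mapsto j-1$ inside the bracket, and the $\sigma$-shift defining the saturation floor $j_0(T)-\sigma-2$ — so that the supremum-differences collapse with a nonnegative remainder in both cases above, and one must be slightly careful about whether the optimal $s$ sits at the floor or just above it. Note in particular that the growth inequality $f_{n,j-\sigma-1}(u)\ge 2^nr^{-j}+\min_{i}f_{n+\kappa,j+2}(t_i)$ of Definition~\ref{growth condition point} is not needed here; only the monotonicity relations $f_{n,j}\ge\max\{f_{n+\kappa,j+1},f_{n,j+1}\}$ and the saturation property of the $f_{n,j}$ are used, exactly as in Lemma~\ref{interpolation}.
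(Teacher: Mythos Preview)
Your proposal is correct and follows essentially the same route as the paper's proof: both argue exactly as in Lemma~\ref{interpolation}, replacing $F_{n,j}$ by $\sup_{u\in\,\cdot\,}f_{n,j}(u)$, establishing the same telescoping estimate via the choices $s=s_{n+\kappa}^a(x)$ and $s-1$, using $B_s(x,2^{n+\kappa})\subset\rho B_{s-1}(x,2^n)$ and $f_{n,s-1}\ge f_{n+\kappa,s}$ to drop the supremum difference, and then summing. The paper records the floor as $s_n^a(x)\ge j_0(T)-\sigma-3$ rather than your $j_0(T)-\sigma-2$ and is terser about the final summation, but these are cosmetic differences.
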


\begin{proof}
Obviously, $0\leq K_{n}\leq \sup_{u\in T,n,j}f_{n,j}(u)$ for all $n\geq 0$.
In addition, it is easy to see $s_n^a(x)\geq j_0(T)-\sigma-3$ for all $n\geq 0$. As we did in Lemma \ref{interpolation},
$$
    \begin{aligned}
    2^{-n}&\sup_{u\in T,n,j}f_{n,j}(u)+K_{n+\kappa}(c_4a2^{n+\kappa},x)-K_n(c_4a2^n,x)\\
    &\geq c_4a(2^\kappa-r)2^nr^{-s_{n+\kappa}^a(x)}+\sup_{u\in B_{s_{n+\kappa}^a(x)-1}(x,2^{n})}f_{n,s_{n+\kappa}^a(x)-1}(u)
    \\&\quad\quad-\sup_{u\in B_{s_{n+\kappa}^a(x)}(x,2^{n+\kappa})}f_{n+\kappa,s_{n+\kappa}^a(x)}(u)\\
    &\geq c_4a(8r-r)2^nr^{-s_{n+\kappa}^a(x)}+\sup_{u\in B_{s_{n+\kappa}^a(x)-1}(x,2^{n})}f_{n+\kappa,s_{n+\kappa}^a(x)}(u)
    \\&\quad\quad-\sup_{u\in \rho B_{s_{n+\kappa}^a(x)-1}(x,2^{n})}f_{n+\kappa,s_{n+\kappa}^a(x)}(u)\\
    &\gtrsim ar2^nr^{-s_{n+\kappa}^a(x)}.
    \end{aligned}
 $$ 

We conclude by summing up these inequalities for all $n\geq 0$.
\end{proof}

\begin{theorem}\label{growth theorem point}
    Let $\sigma$ be large enough. If functions $f_{n,j}$ satisfy $\sigma$- growth condition for points, then we have
    $$
    \gamma(T)\lesssim_r\left(\sup_{u\in T,n,j}f_{n,j}(u)+ r^{-j_0(T)}\right).
    $$
\end{theorem}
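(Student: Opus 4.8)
The plan is to transcribe the proof of Theorem~\ref{lower bound}, replacing the set-functional $F_{n,j}(\cdot)$ by the point-functional $f_{n,j}(\cdot)$, the quantity $F_{0,j^{\psi}_0}(T)$ by $M:=\sup_{u\in T,n,j}f_{n,j}(u)$, and the three structural ingredients Lemma~\ref{gc lemma 1}, Theorem~\ref{Contraction Principle}, Lemma~\ref{interpolation} by their point analogues Lemma~\ref{lemma for point}, Theorem~\ref{Contraction Principle for point}, Lemma~\ref{interpolation for point}. Concretely, put $\Delta_n(x)=K_{n+\kappa}(c_4a2^{n+\kappa},x)-K_n(c_4a2^n,x)$ and
$$
r^{-s_n(x)}=2^{-n}\bigl(\Delta_n(x)+2^{-n}M+2^{-n}r^{-j_0(T)}\bigr)+Car^{-s^a_{n+\kappa}(x)},\qquad n\ge0 ,
$$
so that $r^{-s_n(x)}\gtrsim ar^{-s^a_{n+\kappa}(x)}$, and by Lemma~\ref{interpolation for point} together with the telescoping of $\sum_n\Delta_n(x)$ one gets $\sup_x\sum_{n\ge0}2^nr^{-s_n(x)}\lesssim_r M+r^{-j_0(T)}$. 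The whole statement then reduces, through Theorem~\ref{Contraction Principle for point} (with $C_3=Poly(r)$ and $ar^{\sigma}+r^{-\sigma}$ playing the role of ``$a$''), to establishing
$$
r^{-k_n(A)}\le Poly(r)\Bigl(\bigl(ar^{\sigma}+r^{-\sigma}\bigr)r^{-j_n(A)}+\sup_{x\in A}r^{-s_n(x)}\Bigr)
$$
for every $n\ge0$ and $A\subset T$: granting this, choosing $\sigma$ large and then $a$ small makes the coefficient of $\gamma(T)$ less than $\tfrac12$, so it can be absorbed and the claimed bound follows.

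For $n=0$ the inequality is immediate from $r^{-k_0(T)}\le r^{-j_0(T)}$. For $n\ge1$, after assuming $k_n(A)<\infty$ and disposing of the easy cases $r^{-j_n(A)}>r^{\sigma-c}r^{-k_n(A)}$ and $\varpi:=\sup_{x\in A}r^{-s^a_{n+\kappa}(x)}>r^{-(k_n(A)+C)}$ (each of which yields the required bound directly, exactly as in Theorem~\ref{lower bound}), we have $r^{-j_n(A)}\le r^{\sigma-c}r^{-k_n(A)}$ and $s^a_{n+\kappa}(x)\ge k_n(A)+C$ for all $x\in A$. Apply Lemma~\ref{lemma for point} to obtain $(x_i)_{i\le N_{n+\kappa}}\in A$ with $\varphi_{k_n(A)+1}(x_i,x_{i'})>2^n$ for $i\ne i'$; let $u^*_i$ realize $\sup_{u\in B_{s^a_{n+\kappa}(x_i)}(x_i,2^{n+\kappa})}f_{n+\kappa,s^a_{n+\kappa}(x_i)}(u)$, and let $i_0$ minimize $f_{n+\kappa,k_n(A)+2}(u^*_i)$. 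One then applies the $\sigma$-growth condition of Definition~\ref{growth condition point} to the points $(u^*_i)$, with $j$ a bounded additive shift of $k_n(A)$ and common centre $u=u^*_{i_0}$; this is legitimate once one verifies (i) $\varphi_{j+2}(u^*_i/8,u^*_{i'}/8)>2^{n+\kappa}$, deduced from $\varphi_{k_n(A)+1}(x_i,x_{i'})>2^n$ by absorbing the perturbations $u^*_i-x_i\in r^{-s^a_{n+\kappa}(x_i)}B(2^{n+\kappa})$ and the dilation $8$ via $B(2^{\kappa+n})\subset\rho rB(2^n)$ and $\rho<1/10$, and (ii) $u^*_i\in B_{j-\sigma}(u^*_{i_0},2^n)$ for all $i$, deduced from $A\subset B_{j_n(A)}(x_1,2^n)$, the inequality $s^a_{n+\kappa}(x_i)\ge k_n(A)+C\ge j_n(A)$, the doubling $2B(2^n)\subset B(2^{n+c_0})$, and $j_n(A)\ge k_n(A)-\sigma+c$. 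The growth condition then gives $f_{n,j-\sigma-1}(u^*_{i_0})\ge 2^nr^{-j}+f_{n+\kappa,j+2}(u^*_{i_0})$; bounding the left side above by the $s=j-\sigma-1$ term of $K_n(c_4a2^n,u^*_{i_0})$ and $f_{n+\kappa,j+2}(u^*_{i_0})$ below by $j$-monotonicity ($j+2\le s^a_{n+\kappa}(x_{i_0})$) and the two-sided estimate for $K_{n+\kappa}(c_4a2^{n+\kappa},x_{i_0})$ at $s=s^a_{n+\kappa}(x_{i_0})$---both $K$'s now sitting at the single point $x_{i_0}\in A$ because the common centre was chosen as $u^*_{i_0}$---one obtains
$$
2^nr^{-k_n(A)}\lesssim_r c_4ar^{\sigma}2^nr^{-k_n(A)}+2^{-n}\bigl(\Delta_n(x_{i_0})+2^{-n}M\bigr)+ar^{-s^a_{n+\kappa}(x_{i_0})},
$$
which for $a<1$ is at most $Poly(r)\bigl(ar^{\sigma}r^{-j_n(A)}+\sup_{x\in A}r^{-s_n(x)}\bigr)$, as required.

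The step I expect to be the main obstacle is precisely (i)--(ii): turning the crude separation furnished by Lemma~\ref{lemma for point} (level $k_n(A)+1$, threshold $2^n$, unscaled arguments) into the exact hypotheses of Definition~\ref{growth condition point} (level $j+2$, threshold $2^{n+\kappa}$, arguments divided by $8$, all the $u^*_i$ inside a single ball $B_{j-\sigma}(u,2^n)$), while keeping $r^{-j}$ comparable to $r^{-k_n(A)}$ and forcing $K_n$ and $K_{n+\kappa}$ to be evaluated at a common point of $A$. This is where the scaling inclusions $B(2^{\kappa+n})\subset\rho rB(2^n)$, $B_j(2^{n+1})\subset2B_j(2^n)$, $2B(2^n)\subset B(2^{n+c_0})$ and the freedom to take $\rho<1/10$ get fully used, and it is this bookkeeping that dictates how large $\sigma$ has to be. Apart from it, the proof is a faithful copy of the proof of Theorem~\ref{lower bound}.
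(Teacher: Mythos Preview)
Your proposal follows the paper's own proof almost step by step: the same $\Delta_n(x)$, the same $r^{-s_n(x)}$, the same reduction via Theorem~\ref{Contraction Principle for point} and Lemma~\ref{interpolation for point}, and the same verification of the key inequality via Lemma~\ref{lemma for point} followed by the $\sigma$-growth condition. The paper writes the target inequality with the constant $C_5=r^{\sigma-1}$, giving $aC_5+1/C_5$ in place of your $ar^{\sigma}+r^{-\sigma}$, and your steps (i)--(ii) are verified in the paper exactly as you describe.

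The one place where you differ---and where your write-up is muddled---is the choice of the centre $u$ for the growth condition. The paper takes $u=x_{i_0}$, a point of $A$; since all $H_i\subset B_{k_n(A)-\sigma}(x_{i_0},2^n)$, this is automatically a valid centre, and the conclusion $f_{n,k_n(A)-\sigma-1}(x_{i_0})\ge\cdots$ is already evaluated at a point of $A$, so both $K_n$ and $K_{n+\kappa}$ land at $x_{i_0}$ with no further work. You instead take $u=u^*_{i_0}$, then write ``the $s=j-\sigma-1$ term of $K_n(c_4a2^n,u^*_{i_0})$'' but immediately assert ``both $K$'s now sitting at $x_{i_0}\in A$''. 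These two statements are inconsistent: if the centre is $u^*_{i_0}$ then the growth condition yields $f_{n,j-\sigma-1}(u^*_{i_0})$, and bounding this via a $K_n$ evaluated at $x_{i_0}$ requires the additional observation that $u^*_{i_0}\in B_{j-\sigma-1}(x_{i_0},2^n)$ (which holds, since $u^*_{i_0}-x_{i_0}\in r^{-(k_n(A)+2)}B(2^{n+\kappa})\subset r^{-(k_n(A)+1)}B(2^n)$). This is easy, but it is not ``because the common centre was chosen as $u^*_{i_0}$''; rather, it is despite that choice. Taking the centre to be $x_{i_0}$ directly, as the paper does, removes the wrinkle entirely.
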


\begin{proof}
    Set
    $$
    \Delta_n(x)=K_{n+\kappa}(c_4a2^{n+\kappa},x)-K_n(c_4a2^n,x),
    $$
    $$
    r^{-s_n(x)}=2^{-n}\left(\Delta_n(x)+2^{-n}\sup_{u\in T,n,j}f_{n,j}(u)+2^{-n}r^{-j_0(T)}\right)+Cr^{-s_{n+\kappa}^a(x)} \text{ for } n\geq 0.
    $$
    We want to prove $$r^{-k_n(A)}\leq Poly(r)\left(\left(aC_5+\frac{1}{C_5}\right)r^{-j_n(A)}+\sup\limits_{x\in A}r^{-s_{n}(x)}\right)$$ for all $n\geq 0$ and $A\subset T$. Thus, we can assume $k_n(A)<+\infty$ and $r^{-j_n(A)}\leq C_5r^{-k_n(A)}$. Here, $C_5$ is actually $r^{\sigma-1}$. 
    
    $n=0$ is obvious. Let $n\geq 1$. By Lemma \ref{lemma for point} there exist $(x_i)_{i\leq N_{n+\kappa}} \in A$ such that 
    $$
    \varphi_{k_n(A)+1}(x_i,x_j)> 2^{n} \text{ for } \forall i\neq j.
    $$

    Define $R = \sup\limits_{x\in A}r^{-s_{n+\kappa}^a(x)}$. If $R > r^{-(k_n(A)+2)}$, then the conclusion is automatically satisfied.

    If $R \leq r^{-(k_n(A)+2)}$, then $s_{n+\kappa}^a(x)\geq k_n(A)+2$ for all $x\in A$. Let
    $$
    \delta = \min\{j_n(A),\min\limits_{x\in A}s_{n+\kappa}^a(x)\}-1=j_n(A)-1.
    $$
    Then we have 
    $$
    r^{-j_n(A)}+R\geq r^{-(\delta+1)}\geq \frac{r^{-j_n(A)}+R}{2}
    $$
    and
    $$
    r^{-\delta}=r^{-j_n(A)+1}\leq r^{-k_n(A)+\sigma}.
    $$
    Set $H_i=B_{s_{n+\kappa}^a(x_i)}(x_i,2^{n+\kappa})\subset  B_{k_n(A)+2}(2^{n+\kappa})\subset B_{k_n(A)+1}(2^{n})$.
    
      For $\forall y \in H_j$ and $\forall i \leq N_{n+\kappa}$, 
    \begin{align*}
        y-x_i&= y-x_j+x_j-x_i\\
        &\in B_{j_n(A)+1}(2^n)+B_{j_n(A)}(2^n)\\
        &\subset B_{j_n(A)-1}(2^n)\subset B_\delta(2^n).
    \end{align*}
    Thus, $$\bigcup\limits_{i\leq N_{n+\kappa}}H_i\subset B_\delta(x_i,2^{n})\subset B_{k_n(A)-\sigma}(x_i,2^{n})$$ for all $i\leq N_{n+\kappa}$.
    For $\forall y_i\in H_i$ and $\forall y_j\in H_j$,
    $$
    \varphi_{k_n(A)+2}(y_i/8,y_j/8)> 2^{n+\kappa}.
    $$
    Otherwise
    $$
    \begin{aligned}
    x_i-x_j&=(y_j-x_j)-(y_i-x_i)+(y_j-y_i)\\
    &\in 10B_{k_n(A)+2}(2^{n+\kappa})\subset 10\rho B_{k_n(A)+1}(2^n)   \\
    &\subset B_{k_n(A)+1}(2^n),
    \end{aligned}
    $$
    which contradicts $
    \varphi_{k_n(A)+1}(x_i,x_j)> 2^{n} $.
    
      Then
    \begin{align*}
        2^nr^{-k_n(A)}
        &\leq f_{n,k_n(A)-\sigma-1}(x_j)-\sup_{u\in H_j}f_{n+\kappa,k_n(A)+2}(u) \text{ for some } j \leq N_{n+\kappa}\\
        &\leq \sup_{u\in B_{k_n(A)-\sigma-1}(x_j,2^{n})}f_{n,k_n(A)-\sigma-1}(u)-\sup_{u\in B_{s_{n+\kappa}^a(x_j)}(x_j,2^{n+\kappa})}f_{n+\kappa,s_{n+\kappa}^a(x_j)}(u)\\
        &\leq c_4a2^{n}r^{-j_n(A)+\sigma+1}+\sup_{u\in T,n,j}f_{n,j}(u)-K_n(c_4a2^n,x_j)+K_{n+\kappa}(c_4a2^{n+\kappa},x_j)\\
        &-\sup_{u\in T,n,j}f_{n,j}(u)-c_4a2^{n+\kappa}r^{-s_{n+\kappa}^a(x_j)}+2^{-n+1}\sup_{u\in T,n,j}f_{n,j}(u),
    \end{align*}
    which implies 
    \begin{align*}
    &r^{-k_n(A)}\\
    &\leq Poly(r)\left(aC_5r^{-j_n(A)}+\sup\limits_{x\in A}2^{-n}\left(\Delta_n(x)+2^{-n}\sup_{u\in T,n,j}f_{n,j}(u)\right)+\sup\limits_{x\in A}Car^{-s_n^a(x)}\right)\\
    &\leq Poly(r)\left(aC_5r^{-j_n(A)}+\sup\limits_{x\in A}r^{-s_n(x)}\right).
    \end{align*}
    here we can assume $a<1$.

    Combining Theorem \ref{Contraction Principle for point}, Lemma \ref{interpolation for point}, and $r^{-j_0(T)}\leq Poly(r)$, the conclusion follows since we can choose suitable numbers $a$ and $\sigma$.
\end{proof}

Our goal is to construct a family of functions $\{f_{n,j}\}_{n\in \mathbb{N},j\in \mathbb{Z}}$ on $T$ that satisfy the $\sigma$- growth condition for points. 
We first define the functions $f_{n,j}=0$ in the parameter grid:
\[
\begin{cases} 
n(T)+\kappa< n, & j_0-\sigma-3<j,\\
0\leq n, &j(T) + \sigma + 3 \leq j.
\end{cases}
\]

Subsequently, we extend the definition to all \( (n,j) \in \{n\in \mathbb{N}: n\leq n(T)+\kappa\} \times \{j\in \mathbb{Z}:j_0-\sigma-2\leq j \leq j(T)+\sigma+3\} \) via the recursive scheme when $f_{n+z,j+z^\prime}$ has been assigned to any $z, z^\prime\in \mathbb{N}$ and $z+z^\prime \neq 0$. This procedure constitutes the desired algorithm.

Before we determine $f_{n,j}$, we can define a function $A_{n,j}(u) : T \to \mathbb{R}$ for any fixed $u\in T$.

Firstly, we can find $x_1\in B_{j+1}(u,2^n)$ such that 
$$f_{n+\kappa,j+\sigma+3}(x_1)=\sup_{x\in B_{j+1}(u,2^n)}f_{n+\kappa,j+\sigma+3}(x).$$ For $1\leq m<N_{n+\kappa}$, when $(x_i)_{i\leq m}$ has been defined, if 
$$B_{j+1}(u,2^n)/\bigcup_{i\leq m}4B_{j+\sigma+3}(x_i,2^{n+\kappa})=\varnothing,$$ then we stop and let $A_{n,j}(u)=0$. Otherwise, we can find 
$$x_{m+1}=\sup\left\{f_{n+\kappa,j+\sigma+3}(x) : x\in B_{j+1}(u,2^n)/\bigcup_{i\leq m}4B_{j+\sigma+3}(x_i,2^{n+\kappa})\right\}.$$ 
When $(x_i)_{i\leq N_{n+\kappa}}$ is defined, we set 
$$
A_{n,j}(u)=2^{n}r^{-j-\sigma-1}+f_{n+\kappa,j+\sigma+3}(x_{N_{n+\kappa}}).
$$

Now we define 
\begin{equation}
    f_{n,j}(u)=\max\left\{f_{n+\kappa,j+1}(u),A_{n,j}(u),f_{n,j+1}(u)\right\} \label{iteration}
\end{equation}
for all $u\in T$. 

Finally, we define $f_{n,j}$ for all $j\leq j_0-\sigma-3$ and $n\geq 0$, which  satisfies the growth condition for the points.  Thus, we get a family of functions $\{f_{n,j}\}_{n\in \mathbb{N},j\in \mathbb{Z}}$ on $T$.

Now, we present our result on the deterministic algorithm for computing an approximation to $\textsf{E} \sup\limits_{t\in T}X_t$ in time poly($N(T)$).
\begin{theorem}
We have
    $$
    \gamma(T)\lesssim_r\left(\sup_{u\in T,n,j}f_{n,j}(u)+ r^{-j_0(T)}\right).
    $$
\end{theorem}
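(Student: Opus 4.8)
The plan is to obtain this statement as a direct application of Theorem~\ref{growth theorem point}: it is enough to verify that the family $\{f_{n,j}\}_{n\in\mathbb N,\,j\in\mathbb Z}$ produced by the recursion~\eqref{iteration} (with the fixed parameter $\sigma$) satisfies the $\sigma$-growth condition for points of Definition~\ref{growth condition point}; then, choosing $\sigma$ large enough, Theorem~\ref{growth theorem point} applied to $\{f_{n,j}\}$ yields exactly $\gamma(T)\lesssim_r\bigl(\sup_{u\in T,\,n,\,j}f_{n,j}(u)+r^{-j_0(T)}\bigr)$. Of the three requirements in Definition~\ref{growth condition point}, two are free: $f_{n,j}\geq\max\{f_{n+\kappa,j+1},f_{n,j+1}\}$ is built directly into~\eqref{iteration} (and holds trivially on the outer grid, where $f\equiv 0$), and the flatness $f_{n,j}=\sup_{n\geq0,\,j\geq j_0(T)-\sigma-2}f_{n,j}$ for $j\leq j_0(T)-\sigma-3$ holds because this is precisely how $f_{n,j}$ was defined on the low-$j$ range. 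Only the growth inequality itself needs an argument.

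So I would fix $n$, $j$, points $(t_i)_{i\leq N_{n+\kappa}}$, and a point $u$ as in Definition~\ref{growth condition point}, i.e.\ $\varphi_{j+2}(t_i/8,t_{i'}/8)>2^{n+\kappa}$ for $i\neq i'$ and $t_i\in B_{j-\sigma}(u,2^n)$ for all $i$, and apply~\eqref{iteration} at the grid point $(n,\,j-\sigma-1)$ to get $f_{n,j-\sigma-1}(u)\geq A_{n,j-\sigma-1}(u)$. Unwinding the greedy construction of $A_{n,j-\sigma-1}(u)$: the ball it searches over is $B_{(j-\sigma-1)+1}(u,2^n)=B_{j-\sigma}(u,2^n)$, which contains every $t_i$ by hypothesis; the exclusion balls are $4B_{(j-\sigma-1)+\sigma+3}(x_m,2^{n+\kappa})=4B_{j+2}(x_m,2^{n+\kappa})$; the scalar term equals $2^nr^{-(j-\sigma-1)-\sigma-1}=2^nr^{-j}$; and the function maximized at each step is $f_{n+\kappa,\,(j-\sigma-1)+\sigma+3}=f_{n+\kappa,j+2}$. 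Hence the required inequality $f_{n,j-\sigma-1}(u)\geq 2^nr^{-j}+\min_i f_{n+\kappa,j+2}(t_i)$ reduces to showing that this greedy procedure runs for the full $N_{n+\kappa}$ steps and that its last point $x_{N_{n+\kappa}}$ satisfies $f_{n+\kappa,j+2}(x_{N_{n+\kappa}})\geq\min_i f_{n+\kappa,j+2}(t_i)$; then $A_{n,j-\sigma-1}(u)=2^nr^{-j}+f_{n+\kappa,j+2}(x_{N_{n+\kappa}})\geq 2^nr^{-j}+\min_i f_{n+\kappa,j+2}(t_i)$.

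Both claims follow from a packing/pigeonhole argument. Interpreting $4B_{j+2}(x,2^{n+\kappa})$ as $x+4r^{-(j+2)}B(2^{n+\kappa})$: if two distinct $t_l,t_{l'}$ both lay in one exclusion ball $4B_{j+2}(x_m,2^{n+\kappa})$, then by convexity and symmetry of $B(2^{n+\kappa})$ their difference lies in $8r^{-(j+2)}B(2^{n+\kappa})$, i.e.\ $\varphi_{j+2}(t_l/8,t_{l'}/8)\leq 2^{n+\kappa}$, contradicting the hypothesis; so each exclusion ball captures at most one $t_i$. Consequently, after $m<N_{n+\kappa}$ selections the union $\bigcup_{i\leq m}4B_{j+2}(x_i,2^{n+\kappa})$ omits at least one $t_i$ from $B_{j-\sigma}(u,2^n)$, the residual set is nonempty, and the procedure continues — so it completes all $N_{n+\kappa}$ steps (the picked points are automatically distinct, since each $x_m$ is excluded from all later steps). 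Moreover, at every step $m\leq N_{n+\kappa}$ the residual set still contains some $t_i$, and since $x_m$ maximizes $f_{n+\kappa,j+2}$ over that set, $f_{n+\kappa,j+2}(x_m)\geq f_{n+\kappa,j+2}(t_i)\geq\min_{i'}f_{n+\kappa,j+2}(t_{i'})$; taking $m=N_{n+\kappa}$ gives the second claim, completing the verification.

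The main obstacle is essentially bookkeeping: the three shifts — the $\sigma+1$ in the first index of the conclusion, the $\sigma+3$ occurring inside $A_{n,j}$, and the jump $n\mapsto n+\kappa$ — must line up so that evaluating $f$ at $(n,\,j-\sigma-1)$ reproduces exactly the ball $B_{j-\sigma}(u,2^n)$, exclusion radius $4B_{j+2}$, scalar $2^nr^{-j}$ and index $j+2$ appearing in Definition~\ref{growth condition point}, and one must check that the factor $8$ in the separation hypothesis $\varphi_{j+2}(t_i/8,t_{i'}/8)>2^{n+\kappa}$ is precisely enough to absorb the $4+4$ produced by two points in a common exclusion ball $4B_{j+2}$. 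The remaining points — finiteness and computability of $\sup_{u,n,j}f_{n,j}(u)$ (the grid is finite because $|T|<\infty$), termination of the recursion, and the degenerate grid cases — are routine, and once the $\sigma$-growth condition for points is established, Theorem~\ref{growth theorem point} closes the proof.
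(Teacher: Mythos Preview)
Your proposal is correct and follows the same route as the paper: reduce to Theorem~\ref{growth theorem point} by checking that the recursively defined $\{f_{n,j}\}$ satisfy the $\sigma$-growth condition for points, the only nontrivial part being the pigeonhole argument that each exclusion ball $4B_{j+2}(x_m,2^{n+\kappa})$ can contain at most one of the separated points $t_i$, so the greedy selection runs for the full $N_{n+\kappa}$ steps and the last selected value dominates $\min_i f_{n+\kappa,j+2}(t_i)$. The paper carries out the identical argument with the index shifted (writing the hypothesis as $\varphi_{j+\sigma+3}(t_i/8,t_{i'}/8)>2^{n+\kappa}$, $t_i\in B_{j+1}(u,2^n)$ and deducing $f_{n,j}(u)\geq\cdots$), which is your computation after the substitution $j\mapsto j+\sigma+1$; your version is arguably cleaner in spelling out why the procedure does not halt early, while the paper is more explicit about the range check $j_0-\sigma-2\leq j\leq j(T)-1$, $n\leq n(T)$ ensuring that~\eqref{iteration} actually applies at the needed grid point.
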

\begin{proof}
    By Theorem \ref{growth theorem point},  we need to prove that $\{f_{n,j}\}$ satisfies the growth condition for points.
    
        Obviously $f_{n,j}\geq 0$, $$f_{n,j}\geq \max\{f_{n+\kappa,j+1},f_{n,j+1}\}$$ and $$f_{n,j}=\sup_{n\geq 0,j\geq j_0(T)-\sigma-2}f_{n,j}$$ for all $j\leq j_0(T)-\sigma-3$ and $n\geq 0$.

    If there exist points $(t_i)_{i\leq N_{n+\kappa}} \in l_2$, and $j$ such that
    $$
    \begin{aligned}
    \varphi_{j+\sigma+3} (t_i/8,t_{i^\prime}/8)>2^{n+\kappa} \text{ for } \forall i\neq i^\prime;   \\
    \exists u, \text{ s.t. } t_i \in B_{j+1}(u,2^n) \text{ for } \forall i,
    \end{aligned}
    $$
    then $j+\sigma+3\geq j_0+1$, $j+1\leq j(T)$ and $n\leq n(T)$. This implies $j_0-\sigma-2\leq j\leq j(T)-1$ and $n\leq n(T)$. Thus, we can find $(x_i)_{i\leq N_{n+\kappa}}\in B_{j+1}(u,2^n)$ such that 
    \begin{align*}
    &\qquad f_{n,j}(u)
    \\&\geq 2^{n}r^{-j-\sigma-1}+\sup\left\{f_{n+\kappa,j+\sigma+3}(x_i) : x\in B_{j+1}(u,2^n)/\bigcup_{i< N_{n+\kappa}}4B_{j+\sigma+3}(x_i,2^{n+\kappa})\right\}.    
    \end{align*}
    
   By the separation condition
\[
\varphi_{j+\sigma+3}(t_i/8,t_{i'}/8) > 2^{n+\kappa}
\quad \text{for } i\neq i',
\]
each ball $4B_{j+\sigma+3}(x_i,2^{n+\kappa})$ contains at most one point of
$(t_i)_{i\le N_{n+\kappa}}$.
 Thus there exists at least one point $t_i\in B_{j+1}(u,2^n)/\bigcup_{i<N_{n+\kappa}}4B_{j+\sigma+3}(x_i,2^{n+\kappa})$ such that
    \begin{align*}
     &\min\limits_{i\leq N_{n+\kappa}}f_{n+\kappa,j+\sigma+3}(t_i)
     \\&\leq \sup\left\{f_{n+\kappa,j+\sigma+3}(x_i) : x\in B_{j+1}(u,2^n)/\bigcup_{i< N_{n+\kappa}}4B_{j+\sigma+3}(x_i,2^{n+\kappa})\right\}.   
    \end{align*}
    Thus we have
    $$
    f_{n,j-\sigma-1}(u)\geq 2^nr^{-j-\sigma-1} + \min\limits_{i\leq N_{n+\kappa}}f_{n+\kappa,j+\sigma+3}(t_i).
    $$
\end{proof}

\begin{theorem}
We have
    $$
    \sup_{u\in T,n,j}f_{n,j}(u)+r^{-j_0}\lesssim_r \textsf{E}\sup\limits_{t\in T}X_t.
    $$
\end{theorem}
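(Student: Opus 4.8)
The plan is to split the inequality into $r^{-j_0(T)}\lesssim_r\textsf{E}\sup_{t\in T}X_t$ and $\sup_{u\in T,\,n,\,j}f_{n,j}(u)\lesssim_r\textsf{E}\sup_{t\in T}X_t$ and to treat the two pieces separately. The first one is immediate from Theorem~\ref{them 1.1}: the exponent $j_0$ produced there satisfies $\varphi_{j_0}(s,t)\le 1$ for all $s,t\in T$ together with $r^{-j_0}\le c(C_0)\,r\,\textsf{E}\sup_{t\in T}X_t$, and since $j_0(T)$ is by definition the largest such exponent we get $r^{-j_0(T)}\le r^{-j_0}\lesssim_r\textsf{E}\sup_{t\in T}X_t$.

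For the second piece I would run the dual majorizing measure theorem ``in reverse''. By Theorem~\ref{mr} (equivalently, by Theorem~\ref{dmr} together with Theorem~\ref{them 3.1}) one has $Size(T)\sim_r\textsf{E}\sup_{t\in T}X_t$, so it suffices to produce, for every triple $(u_0,n_0,j_0)$ with $j_0$ in the admissible range of the grid, an iterative organized tree $\Gamma$ of a fixed rescaled copy $cT$ of $T$ with $\rho(\Gamma)\ge c'(r)\,f_{n_0,j_0}(u_0)$; since below the threshold $f_{n,j}$ is just the supremum of $f$ over the admissible range, and $\textsf{E}\sup_{t\in cT}X_t=c\,\textsf{E}\sup_{t\in T}X_t$, taking the supremum over $(u_0,n_0,j_0)$ then closes the argument. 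This follows the pattern of the proofs of Theorems~\ref{them 2.5} and~\ref{them 3.1}, only now carried out on the explicit functions $\{f_{n,j}\}$ produced by the recursion~\eqref{iteration}; an alternative would be the inductive comparison of $f_{n,j}(u)$ with $F_{n,\cdot}$ of the ball around $u$, using that $F$ satisfies the growth condition.

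The tree $\Gamma$ is obtained by \emph{unfolding}~\eqref{iteration}. Walking through the states $(u,n,j)$, one looks at which of the three terms attains the maximum in $f_{n,j}(u)=\max\{f_{n+\kappa,j+1}(u),A_{n,j}(u),f_{n,j+1}(u)\}$: if it is $f_{n+\kappa,j+1}(u)$ or $f_{n,j+1}(u)$ one passes to that state creating no vertex (neither $u$ nor the value of $f$ changes); if it is $A_{n,j}(u)$ one records a branching whose children are the greedily chosen points $x_1,\dots,x_{N_{n+\kappa}}\in B_{j+1}(u,2^n)\cap T$ — pairwise separated at level $j+\sigma+3$ and satisfying $f_{n+\kappa,j+\sigma+3}(x_{N_{n+\kappa}})=\min_i f_{n+\kappa,j+\sigma+3}(x_i)$ — each given the labels $\mathbf n=n+\kappa$, $\mathbf j=j+\sigma+3$, and one recurses on each $x_i$. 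Since $|T|<\infty$ and the grid is finite, every branch terminates at a state with $f=0$, so $\Gamma$ is a finite tree. The value estimate comes from unfolding the identity $f_{n,j}(u)=A_{n,j}(u)=2^nr^{-j-\sigma-1}+\min_i f_{n+\kappa,j+\sigma+3}(x_i)$ along an arbitrary branch: since the minimum is dominated by the value at any child, iterating gives $f_{n_0,j_0}(u_0)\le\sum 2^{n}r^{-j-\sigma-1}$, the sum running over the branching vertices $p$ on the branch with $(n,j)$ the state at which $p$ branches. On the other hand the term of $\rho(\Gamma)$ contributed by the child of $p$ lying on the branch equals $\log(\lvert c(p)\rvert)\,r^{-\mathbf j}=2^{n+\kappa}(\log2)\,r^{-(j+\sigma+3)}$, because $\lvert c(p)\rvert=N_{n+\kappa}$, which by $2^{\kappa}=8r$ equals $2^{6-\kappa}(\log2)\cdot 2^{n}r^{-j-\sigma-1}$ — a fixed $r$-dependent multiple of the $f$-contribution at that branching. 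Summing and taking the infimum over branches yields $\rho(\Gamma)\ge c'(r)\,f_{n_0,j_0}(u_0)$, whence $\sup_{u,n,j}f_{n,j}(u)\le C(r)\,Size(cT)\sim_r\textsf{E}\sup_{t\in T}X_t$.

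The step I expect to be the main obstacle is checking that the unfolded object is a legitimate iterative organized tree (equivalently a parameterized separation tree of $cT$), i.e.\ verifying conditions~(2)--(6) for the prescribed labels and an appropriate choice of the vertex sets. Condition~(2) is immediate since every branching raises $n$ by $\kappa$. The nesting $B_{\mathbf j(A)}(2^{\mathbf n(A)})\subset B_{\mathbf j(p(A))}(2^{\mathbf n(p(A))})$ of~(4) is obtained by carrying, through the non-branching moves $(n,j)\mapsto(n,j+1)$ and $(n,j)\mapsto(n+\kappa,j+1)$ and then through the branching $(n,j)\mapsto(n+\kappa,j+\sigma+3)$, the invariant that the ball of the current state sits inside the ball of the current vertex; each such step is absorbed by Lemma~\ref{lemma 2.3} (i.e.\ $B(2^{\kappa+n})\subset\rho r\,B(2^n)$) and the $\Delta_2$-condition~\eqref{1}, using $\sigma\ge 0$ so that a downward shift of $\sigma+2$ levels is harmless — this is where the hypothesis ``$\sigma$ large enough'' enters. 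Condition~(6) follows from Lemma~\ref{lemma 2} once one translates the greedy exclusion $x_{m+1}\notin 4B_{j+\sigma+3}(x_i,2^{n+\kappa})$ into $\bar\varphi_{j+\sigma+3}(x_i,x_{i'})>2^{n+\kappa}$ for $i\neq i'$ and arranges that each vertex set lies inside its own ball (the fixed dilations $\tfrac18,\tfrac14,2$ in the definitions are absorbed, as throughout the paper, by working on $cT$). Conditions~(3) and~(5) take some care: (3) holds because the first branching occurs at a level $j_0\ge j_0(T)-\sigma-2$, so the first generation has $\mathbf j\ge\tilde j_0$ once the root carries $J=\min\{\tilde j_0,\,j_0+\sigma+3\}$; and for~(5) one must choose the vertex sets so that every child of a branching vertex has at least $N_{\mathbf n}$ points, giving $\mathbf j=\infty$ to any child that is too small (which then contributes nothing to $\rho$) — here one uses that a genuinely growing branch of the recursion forces the child balls along it to be populated, so the branch sum in $\rho(\Gamma)$ is unaffected. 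Once the structure is confirmed to be a valid tree, $\rho(\Gamma)\le Size(cT)\sim_r\textsf{E}\sup_{t\in T}X_t$ finishes the proof.
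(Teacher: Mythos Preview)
Your proposal is correct and follows essentially the same route as the paper: both arguments unfold the recursion~\eqref{iteration} to build an iterative organized tree whose $\rho$-value is, up to an $r$-dependent constant, equal to $f_{n_0,j_0}(u_0)$, and then invoke $Size(T)\lesssim_r\textsf{E}\sup_{t\in T}X_t$ (Theorem~\ref{mr}). The paper's write-up is slightly tighter in that it tracks the unfolding along the minimizing branch and obtains the exact relation $f_{n^0,j^0}(u_0)=\tfrac{2^{-\kappa}r^2}{\log 2}\,\rho(\Gamma)$, whereas you only claim the inequality $\rho(\Gamma)\ge c'(r)\,f_{n_0,j_0}(u_0)$ --- but that inequality is all that is needed.

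Two small points you could sharpen. First, the rescaling to $cT$ is unnecessary: the greedy exclusion $x_{m+1}\notin 4B_{j+\sigma+3}(x_i,2^{n+\kappa})$ translates directly into $\bar\varphi_{j+\sigma+3}(x_i,x_{i'})>2^{n+\kappa}$, which is exactly the separation condition in the iterative organized tree (built with $\bar\varphi$), so you can work on $T$ itself as the paper does. Second, your justification of condition~(3) via the grid bound $j_0\ge j_0(T)-\sigma-2$ is not quite the right argument; the clean reason $\mathbf j=j^0+\sigma+3\ge\tilde j_0$ is that the separated points satisfy $\bar\varphi_{j^0+\sigma+3}(x_i,x_{i'})>2^{n^0+\kappa}>1$ while $\bar\varphi_{\tilde j_0}\le 1$ on all of $T$, and $\bar\varphi_j$ is increasing in $j$.
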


\begin{proof}
    We need to prove $\sup_{u\in T,n,j}f_{n,j}(u)\lesssim_r \textsf{E}\sup\limits_{t\in T}X_t$. Note that there exists $(n,j,u_0)$ such that $$f_{n,j}(u_0)=\sup_{u\in T,n,j}f_{n,j}(u).$$
    Define $$S_0=\left\{(n,j);f_{n,j}(u_0)=\sup_{u\in T,n,j}f_{n,j}(u)\right\}.$$
    
      Then we can find $(n^0,j^0)\in S_0$ such that $n^0\geq n$ and $j^0\geq j$ for $\forall (n,j)\in S_0$ by (\ref{iteration}). Thus, it is easy to see $f_{n^0,j^0}(u_0)=A_{n^0,j^0}(u_0)$, which implies that there exist points $(x_i^0)_{i\leq N_{n^0+\kappa}}\in B_{j+1}(u_0,2^n)$ such that
    $$
    \begin{aligned}
        \Bar{\varphi}_{j^0+\sigma+3}(x_i^0,x_{i^\prime}^0)>2^{n^0+\kappa} \text{ for } \forall i\neq i^\prime \leq N_{n^0+\kappa},\\
        f_{n^0,j^0}(u_0)=2^{n^0}r^{-j^0-\sigma-1}+\min_{i\leq N_{n^0+\kappa}}f_{n^0+\kappa,j^0+\sigma+3}(x_i^0).\\
    \end{aligned}
    $$
    We then set
\[
\mathcal{B}_0 = \left\{
A_i^0 := B_{j^0+\sigma+3}(x_i^0, 2^{n^0+\kappa}) \cap T \;:\; i \le N_{n^0+\kappa}
\right\},
\]
and set
\[
\mathbf{j}(A_i^0) = j^0 + \sigma + 3, \quad
\mathbf{n}(A_i^0) = n^0 + \kappa.
\]

    For iteration, If we have $\mathcal{B}_k$ for $k\geq 0$ and any set $A_m$ that belongs to $\mathcal{B}_k$ have been defined as $B_{j^k}(x_m^k,2^{n^k})\cap T$. If $f_{n^k,j^k}(x_m^k)=0$, then stop; otherwise we can find the largest number $j^{k\prime}$ and the largest number $n^{k\prime}$ such that
    $$
    \begin{aligned}
        j^{k\prime}-j^k\geq \frac{1}{\kappa}\left(n^{k\prime}-n^k\right)\in \mathbb{N},\\
        f_{n^k,j^k}(x_m^k)=f_{n^{k\prime},j^{k\prime}}(x_m^k),\\
        f_{n^{k\prime},j^{k\prime}}(x_m^k)=A_{n^{k\prime},j^{k\prime}}(x_m^k).
    \end{aligned}
    $$
    As discussed previously, we can find $(x_i^{k+1})_{i\leq N_{n^{k\prime}+\kappa}}\in B_{j^{k\prime}+1}(x_m^k,2^{n^{k\prime}})$ such that

    \begin{align*}
    \Bar{\varphi}_{j^{k\prime}+\sigma+3}(x_i^{k+1},x_{i^\prime}^{k+1})>2^{n^{k\prime}+\kappa} \text{ for } \forall i\neq i^\prime \leq N_{n^{k\prime}+\kappa},\\
    f_{n^{k\prime},j^{k\prime}}(x_m^k)=2^{n^{k\prime}}r^{-j^{k\prime}-\sigma-1}+\min_{i\leq N_{n^{k\prime}+\kappa}}f_{n^{k\prime}+\kappa,j^{k\prime}+\sigma+3}(x_i^{k+1}),
     \end{align*}
    and define 
    $$
    \begin{aligned}
        \left\{A_i^{k+1}=B_{j^{k\prime}+\sigma+3}(x_i^{k+1},2^{n^{k\prime}+\kappa})\cap T;i\leq N_{n^{k\prime}+\kappa}\right\}\subset \mathcal{B}_{k+1},\\
        \mathbf{n}(A_i^{k+1})=n^{k\prime}+\kappa; \mathbf{j}(A_i^{k+1})=j^{k\prime}+\sigma+3.
    \end{aligned}
    $$
        For any $i$, if $y\in A_i^k$, then we have 
    $$
    \begin{aligned}
        y-x_m^k&=y-x_i^{k+1}+x_i^{k+1}-x_m^k\\
        &\in B_{j^{k\prime}+\sigma+3}(2^{n^{k\prime}+\kappa})+B_{j^{k\prime}+1}(2^{n^{k\prime}})\\
        &\subset 2r^{-1}B_{j^{k\prime}+1}(2^{n^{k\prime}})\subset B_{j^{k\prime}}(2^{n^{k\prime}})\\
        &\subset (\rho r)^{\frac{1}{\kappa}(n^{k\prime}-n^k)}B_{j^{k\prime}}(2^{n^{k\prime}}) \subset B_{j^{k}}(2^{n^{k\prime}}).
    \end{aligned}
    $$
    Then we obtain $A_i^k\subset A_m$ for $\forall i$ and $B_{j^{k\prime}+\sigma+3}(2^{n^{k\prime}+\kappa})\subset B_{j^{k}}(2^{n^{k\prime}})$.
    
      It is easy to see $j^0+\sigma+3\geq \tilde{j}_0$. Thus we get an organized iterative tree $\Gamma _{\left(\mathbf{n}, \mathbf{j}\right)}\left(T\right)$. By construction, it is easy to see $f_{n^0,j^0}(u_0)=\frac{2^{-\kappa}r^{2}}{\log2}\rho\left(\Gamma _{\left(\mathbf{n}, \mathbf{j}\right)}\left(T\right)\right)$. Thus, we have
    $$
    \sup_{u\in T,n,j}f_{n,j}(u)\lesssim_rSize(T) \lesssim_rE\sup\limits_{t\in T}X_t.
    $$

    \end{proof}

In practice, since the computation of $A_{n,j}$ is based on the points in $T$, we need not perform computations for all $j$. This is the key to decoupling the algorithm's time complexity from $j(T)$.

\begin{theorem}
    We can compute $$\sup\Bigg\{f_{n,j} : 0\leq n\leq n(T)+\kappa,j_0-\sigma-2\leq j\leq j(T)+\sigma+3\Bigg\}$$ in polynomial time in $N(T)$.
\end{theorem}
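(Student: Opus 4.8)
The plan is as follows. Correctness of the output — namely that it equals $\sup_{u\in T,n,j}f_{n,j}(u)$, which by the preceding theorems is $\sim_r\textsf{E}\sup_{t\in T}X_t$ — is already established; what remains is purely the bookkeeping that the recursion (\ref{iteration}), together with the greedy definition of the auxiliary quantities $A_{n,j}$, can be executed in time $\mathrm{poly}(N(T))$. The only apparent obstruction is that $j$ ranges over $[j_0-\sigma-2,\ j(T)+\sigma+3]$, an interval whose length $j(T)-j_0+O(\sigma)$ need not be polynomial in $N(T)$; so the whole point is to show that only polynomially many of these values of $j$ require genuine work, the rest being swept up by closed-form propagation.

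First, the parameter $n$ is harmless. Since $N(T)=|T|\le N_{n(T)}=2^{2^{n(T)}}$ with $n(T)$ minimal, $2^{n(T)}\le 2\log_2 N(T)$, so $n$ takes only the $O(\log\log N(T))$ values $0,1,\dots,n(T)+\kappa$, and for each of them $N_{n+\kappa}\le N_{n(T)+2\kappa}=2^{2^{2\kappa}\cdot 2^{n(T)}}\le N(T)^{2^{2\kappa+1}}$, which is polynomial as $\kappa$ is a fixed constant. Hence each greedy list $(x_i)_{i\le N_{n+\kappa}}$ in the definition of $A_{n,j}$ has polynomial length — and in any case is drawn from the $\le N(T)$ points of $B_{j+1}(u,2^n)\cap T$, so the greedy loop terminates within $\min\{N_{n+\kappa},N(T)\}\le N(T)$ steps. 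Each step is a maximisation of already-tabulated numbers over $\le N(T)$ candidates together with $O(N(T))$ membership tests ``$\varphi_j(x,y)\le 2^n$''; so, in the computational model in which the functionals $\varphi_j$ are available (e.g.\ the closed forms of Section~5), a single evaluation of $A_{n,j}(u)$, and hence of $f_{n,j}(u)$ via (\ref{iteration}), costs $\mathrm{poly}(N(T))$ time.

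The crux is the parameter $j$. For each $v\in T$ and each scale occurring in the recursion, the family $j\mapsto B_j(v,2^{m})\cap T$ is a nested, shrinking family of subsets of the $N(T)$-point set $T$, hence takes at most $N(T)+1$ distinct values and jumps only at those integers $j$ where $\varphi_j(v,t)$ crosses $2^{m}$ for some $t\in T$. Let $\mathcal J^\ast\subset\mathbb Z$ consist of all such jump integers — over all $v,t\in T$ and all relevant scales $m\le n(T)+2\kappa$ — together with their translates by the two shifts $1$ and $\sigma+3$ with which $j$ enters the balls of (\ref{iteration}) and of $A_{n,j}$, and the endpoints $j_0-\sigma-2,\ j(T)+\sigma+3$; then $\mathcal J^\ast$ is computable and $|\mathcal J^\ast|=\mathrm{poly}(N(T))$. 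Process $(n,j)$ in order of decreasing $j$. On any maximal gap of $\mathbb Z\setminus\mathcal J^\ast$ all the balls entering the recursion are constant as subsets of $T$; one shows that there $j\mapsto f_{n,j}(u)$ is non-increasing, equals a maximum of a constant and finitely many geometric terms $c\,r^{-j}$, and is obtained by solving the resulting one-dimensional max-recursion in closed form, using as boundary data the already-computed values at the gap's right endpoint and the analogous closed forms for $f_{n+\kappa,\cdot}(\cdot)$; this costs $\mathrm{poly}(N(T))$ per gap, per $(n,u)$. Thus the full table $\{f_{n,j}(u)\}$ over the possibly-exponentially-many $j$ is represented by $\mathrm{poly}(N(T))$ numbers, and the desired supremum is their maximum — the $j\le j_0-\sigma-3$ tail being ignored since the flatness built into Definition~\ref{growth condition point} makes those values all equal to $\sup_{n\ge0,\,j\ge j_0-\sigma-2}f_{n,j}$.

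The main obstacle is the closed-form step just described. Two features complicate it: the shift by $\sigma+3$ means that a gap of $\mathcal J^\ast$ need not map into a single gap under $j\mapsto j+\sigma+3$, so one cannot simply ``restrict to a gap'' but must genuinely solve the recursion $g_n(j)=\max\{g_{n+\kappa}(j+1),\,2^nr^{-j-\sigma-1}+g_{n+\kappa}(j+\sigma+3),\,g_n(j+1)\}$ over a long interval — which one does by expanding every value of $f$ as the maximum, over paths in this recursion up to the zero region, of a sum of $O(\log\log N(T))$ atoms $2^{n'}r^{-j'-\sigma-1}$; and the greedy rule defining $A_{n,j}$ compares the $j$-dependent quantities $f_{n+\kappa,j+\sigma+3}(x)$, so the last point $x_{N_{n+\kappa}}$ it selects can change within a gap, which forces one to refine $\mathcal J^\ast$ by the integers at which two tabulated pieces cross. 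The heart of the argument is checking that this refinement does not cascade: since every quantity produced lives on the finite set $T$ and is a max/sum of boundedly many atoms drawn from a $\mathrm{poly}(N(T))$-sized pool, only $\mathrm{poly}(N(T))$ distinct comparison outcomes — hence breakpoints — can arise, and the process stabilises after polynomially many refinements. Once this is in hand, the total running time is $\mathrm{poly}(N(T))$ gaps $\times\ O(\log\log N(T))$ values of $n$ $\times\ N(T)$ points $u$ $\times\ \mathrm{poly}(N(T))$ per cell, which is polynomial in $N(T)$.
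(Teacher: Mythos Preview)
You correctly identify the only real obstacle --- the range of $j$ need not be polynomial in $N(T)$ --- and your breakpoint set $\mathcal J^\ast$ is a reasonable first move. But the argument has a genuine gap at precisely the point you flag as ``the heart of the argument.'' Your claim that the atoms $2^{n'}r^{-j'-\sigma-1}$ are drawn from a $\mathrm{poly}(N(T))$-sized pool is never justified: such an atom appears whenever the greedy loop defining $A_{n',j'}$ runs to completion, and this can perfectly well happen for $j'$ in the interior of a gap of $\mathcal J^\ast$ --- constancy of the balls $B_{j'+1}(u,2^{n'})\cap T$ and $4B_{j'+\sigma+3}(\cdot,2^{n'+\kappa})\cap T$ on the gap says nothing about whether the greedy succeeds in finding $N_{n'+\kappa}$ separated points there. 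So you have not confined $j'$ to a polynomial set, and without that the crossing-point refinement you propose can grow by a $\mathrm{poly}(N(T))$ factor at each of the $O(\log\log N(T))$ levels of $n$, yielding at best $N(T)^{O(\log\log N(T))}$, which is quasi-polynomial, not polynomial.

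The paper avoids all of this with one sharper observation that you are missing: $A_{n,j}(u)\neq 0$ forces two points $x_1,x_2\in B_{j+1}(u,2^n)\cap T$ with $x_1-x_2\notin 4B_{j+\sigma+3}(2^{n+\kappa})$, hence (after a short triangle-inequality step) some pair $x,y\in T$ with $x-y\in B_{j+1}(2^n)\setminus B_{j+\sigma+2}(2^n)$. For each fixed pair and each $n$ this traps $j$ in an interval of length at most $\sigma+1$, so the set $I$ of $j$ for which $A_{n,j}\not\equiv 0$ for some $n,u$ satisfies $|I|\le(\sigma+1)\,N(T)^2\,(n(T)+\kappa+1)=\mathrm{poly}(N(T))$. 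Once $A_{n,j}\equiv 0$ for $j\notin I$, the recursion for those $j$ collapses to $f_{n,j}=\max\{f_{n+\kappa,j+1},\,f_{n,j+1}\}$, pure propagation; the paper then writes a parallel recursion $g_{n,j}$ indexed only by $j\in I$ that jumps directly to the next element of $I$, checks $g\equiv f$ by a short induction, and is done. No closed forms, no crossing points, no cascading --- the greedy never has to be analysed as a function of $j$, because it vanishes identically wherever that analysis would be needed.
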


\begin{proof}
Note that there exist points $x_1, x_2\in B_{j+1}(u,2^n)$ such that $x_1-x_2\notin 4B_{j+\sigma+3}(2^{n+\kappa})$ when $A_{n,j}(u)\neq 0$. It follows that
$$
x_1\notin B_{j+\sigma+2}(u,2^n) \text{ or } x_2\notin B_{j+\sigma+2}(u,2^n).
$$
Otherwise, $x_1-u\in B_{j+\sigma+2}(u,2^n) $ and $x_2-u\in B_{j+\sigma+2}(u,2^n)$, which imply $x_1-x_2\in B_{j+\sigma+2}(u,2^n)\subset 4B_{j+\sigma+3}(2^{n+\kappa})$.

Now we can define a set,
$$
\begin{aligned}
&I=\{j\in \mathbb{Z}: \exists x,y\in T\text{ and natural number }n\leq n(T)+\kappa \text{ such that } \\ &x-y\notin B_{j+\sigma+2}(2^n) \text{ and }x-y\in B_{j+1}(2^n)\}\cap \left\{j:j_0-\sigma-2\leq j \leq j(T)+\sigma+3\right\}.
\end{aligned}
$$
Then we can easily construct $I$ in time Poly($N(T)$) by $$I\subset \bigcup_{0\leq n\leq n(T)+\kappa}\bigcup_{x,y\in T}\{j\in \mathbb{Z}:x-y\notin B_{j+\sigma+2}(2^n) \text{ and }x-y\in B_{j+1}(2^n)\}.$$
Furthermore, we have $\left|I\right|\leq Poly(N(T))$. Thus,  we conclude that $j\in I$ if $A_{n,j}(u)\neq 0$.

We now focus on constructing $f_{n,j}$ in the domain $I$.  We denote the modified version by $g_{n,j}$.

We first define the functions $g_{n,j}$ in the parameter grid:
\[
\begin{cases} 
n(T)+\kappa< n, & j_0-\sigma-3<j\in I,\\
0\leq n, &j(T) + \sigma + 3 \leq j\in I.
\end{cases}
\]
Subsequently, we extend the definition to all \( (n,j) \in \{n\in \mathbb{N}: n\leq n(T)+\kappa\} \times I\) via the recursive scheme when $g_{n+z\kappa,j+z^\prime}$ have been assigned for any $z, z^\prime\in \mathbb{N}$, $z\leq z^\prime\neq 0$ and $j,j+z^\prime\in I$.

Similarly, we define a new function $B_{n,j}(u) : T \to \mathbb{R}$ for any fixed $u\in T$. Before that, for notational simplicity, we define a function $$h_{n+\kappa,j+\sigma+3}=\sup\Bigg\{g_{n+\kappa+z\kappa,j+\sigma+3+z^\prime} : z, z^\prime\in \mathbb{N}\text{, } z\leq z^\prime \text{ and } j+\sigma+3+z^\prime\in I\Bigg\}.$$

First, we can find $x_1\in B_{j+1}(u,2^n)$ such that 
$$h_{n+\kappa,j+\sigma+3}(x_1)=\sup_{x\in B_{j+1}(u,2^n)}h_{n+\kappa,j+\sigma+3}(x).$$ for $1\leq m<N_{n+\kappa}$, when $(x_i)_{i\leq m}$ have been defined, if 
$$B_{j+1}(u,2^n)/\bigcup_{i\leq m}4B_{j+\sigma+3}(x_i,2^{n+\kappa})=\varnothing,$$ then we stop and let $B_{n,j}(u)=0$, otherwise we can find 
$$x_{m+1}=\sup\left\{h_{n+\kappa,j+\sigma+3}(x) : x\in B_{j+1}(u,2^n)/\bigcup_{i\leq m}4B_{j+\sigma+3}(x_i,2^{n+\kappa})\right\}.$$ 
When $(x_i)_{i\leq N_{n+\kappa}}$ are defined, we set
$$
B_{n,j}(u)=2^{n}r^{-j-\sigma-1}+h_{n+\kappa,j+\sigma+3}(x_{N_{n+\kappa}}).
$$

Now we define 
\begin{equation}
    g_{n,j}(u)=\max\Bigg\{\sup\Big\{g_{n+z\kappa,j+z^\prime}: z, z^\prime\in \mathbb{N}\text{, } z\leq z^\prime \text{ and } j+z^\prime\in I\Big\},B_{n,j}(u)\Bigg\} \label{iteration}
\end{equation}
for all $u\in T$. 

Finally, we get $g_{n,j}$ over \( (n,j) \in \{n\in \mathbb{N}: n\leq n(T)+\kappa\} \times I\) in time polynomial in $N(T)$. 

By construction, it is easy to see that $$g_{n,j}=\sup\left\{g_{n+z\kappa,j+z^\prime}: z, z^\prime\in \mathbb{N}\text{, } z\leq z^\prime \text{ and } j+z^\prime\in I\right\}$$ when $j\in I$. Thus, we can extend this definition to $$ (n,j) \in \{n\in \mathbb{N}: n\leq n(T)+\kappa\} \times \{j\in \mathbb{Z}:j_0-\sigma-2\leq j \leq j(T)+\sigma+3\} $$ by defining $$g_{n,j}=\sup\left\{g_{n+z\kappa,j+z^\prime}: z, z^\prime\in \mathbb{N}\text{, } z\leq z^\prime \text{ and } j+z^\prime\in I\right\}.$$

Next, we need to prove that $f_{n,j}\equiv g_{n,j}$. We can prove this by induction.

Firstly $f_{n,j}\equiv g_{n,j}$ on the parameter grid:
\[
\begin{cases} 
n(T)+\kappa< n, & j_0-\sigma-3<j\in I,\\
0\leq n, &j(T) + \sigma + 3 \leq j\in I.
\end{cases}
\]
Furthermore, 
we can assume that $f_{n+z,j+z^\prime}\equiv g_{n+z,j+z^\prime}$ for any $z, z^\prime\in \mathbb{N}$ and $z+z^\prime \neq 0$. If $j\neq I$, we have $$f_{n,j}=\sup_{z, z^\prime\in \mathbb{N}\text{, } z\leq z^\prime\neq 0} f_{n+z\kappa,j+z^\prime}=\sup_{z, z^\prime\in \mathbb{N}\text{, } z\leq z^\prime\neq 0} g_{n+z\kappa,j+z^\prime}=g_{n,j}.$$
If $j\in I$, we get $A_{n,j}=B_{n,j}$ by constructions and assumptions. Thus
$$
\begin{aligned}
f_{n,j}&=\max\{\sup_{z, z^\prime\in \mathbb{N}\text{, } z\leq z^\prime\neq 0} f_{n+z\kappa,j+z^\prime},A_{n,j}\}
\\&=\max\{\sup_{z, z^\prime\in \mathbb{N}\text{, } z\leq z^\prime\neq 0} g_{n+z\kappa,j+z^\prime},B_{n,j}\}=g_{n,j}.  
\end{aligned}
$$
By induction, we derive $f_{n,j}\equiv g_{n,j}$ over $$ (n,j) \in \{n\in \mathbb{N}: n\leq n(T)+\kappa\} \times \{j\in \mathbb{Z}:j_0-\sigma-2\leq j \leq j(T)+\sigma+3\}.$$

Therefore, we can compute  $$\sup\left\{f_{n,j} : 0\leq n\leq n(T)+\kappa,j_0-\sigma-2\leq j\leq j(T)+\sigma+3\right\}$$ by computing $\sup\left\{g_{n,j} : 0\leq n\leq n(T)+\kappa,j\in I\right\}$ in polynomial time in $N(T)$.

\end{proof}


\textbf{Acknowledgment}.The work of Hanchao Wang (corresponding author) was supported by the National Key R\&D Program of China (No.2024YFA1013501), the National Natural Science Foundation of China (No. 12571162), and Shandong Provincial Natural Science Foundation (No. ZR2024MA082). The work of Vladimir V. Ulyanov  was supported by the HSE University Basic Research Program and the program of the Moscow Center for Fundamental and Applied Mathematics, Lomonosov Moscow State University.
\printbibliography

\end{document}